\definecolor{couleur_cite}{rgb}{0.05,.4,0.05}
\definecolor{couleur_link}{rgb}{0.05,0.05,0.4}
\newtheorem{theorem}{Theorem}[section]
\newtheorem{lemma}[theorem]{Lemma}
\newtheorem{prop}[theorem]{Proposition}
\newtheorem{cor}[theorem]{Corollary}
\newtheorem{definition}[theorem]{Definition}
\newtheorem{conj}{Conjecture}
\theoremstyle{remark}
\newtheorem{remark}[theorem]{Remark}
\newcommand{\R}{\mathbb R}
\newcommand{\C}{\mathbb C}
\newcommand{\N}{\mathbb N}
\newcommand{\Z}{\mathbb Z}
\newcommand{\Q}{\mathbb Q}
\newcommand{\A}{\mathbb A}
\newcommand{\p}{\mathfrak p}
\newcommand{\g}{\mathfrak g}
\newcommand{\gk}{\mathfrak k}
\newcommand{\ga}{\mathfrak a}
\newcommand{\gn}{\mathfrak n}
\newcommand{\gh}{\mathfrak h}
\newcommand{\cO}{\mathcal O}
\newcommand{\cL}{\mathcal L}
\newcommand{\cD}{\mathcal D}
\newcommand{\cP}{\mathcal P}
\newcommand{\cH}{\mathcal H}
\newcommand{\cC}{\mathcal C}
\newcommand{\cN}{\mathcal N}
\newcommand{\SL}{\text{SL} }
\newcommand{\GL}{\text{GL} }
\newcommand{\SU}{\text{SU} }
\newcommand{\SO}{\text{SO} }
\newcommand{\Sp}{\text{Sp} }
\newcommand{\U}{\text{U} }
\newcommand{\ad}{\textup{ad}}
\newcommand{\Ad}{\textup{Ad}}
\newcommand{\tr}{\text{tr}}
\newcommand{\be}{\begin{equation}}
\newcommand{\ee}{\end{equation}}
\newcommand{\bes}{\begin{equation*}}
\newcommand{\ees}{\end{equation*}}
\newcommand{\ba}{\begin{eqnarray}}
\newcommand{\ea}{\end{eqnarray}}
\newcommand{\bas}{\begin{eqnarray*}}
\newcommand{\eas}{\end{eqnarray*}}
\begin{document}
\title[Lower bounds for Maass forms on semisimple groups]{Lower bounds for Maass forms on semisimple groups}

\author{Farrell Brumley}
\email{brumley@math.univ-paris13.fr}
\address{Universit\'e Sorbonne Paris Nord\\ 
Laboratoire de G\'eom\'etrie, Analyse et Applications\\ 
LAGA, CNRS, UMR 7539\\ 
F-93430, Villetaneuse, France }
\thanks{Supported by ANR grant 14-CE25}

\author{Simon Marshall}
\email{marshall@math.wisc.edu}
\address{Department of Mathematics\\
University of Wisconsin -- Madison\\
480 Lincoln Drive\\
Madison\\
WI 53706, USA}
\thanks{Supported by NSF grant DMS-1902173.}

\classification{11F03 (primary), 11F70, 11F72 (secondary).}
\keywords{sup norms, Maass forms, symmetric varieties, trace formula, automorphic periods}

\begin{abstract}
Let $G$ be an anisotropic semisimple group over a totally real number field $F$. Suppose that $G$ is compact at all but one infinite place $v_0$. In addition, suppose that $G_{v_0}$ is $\R$-almost simple, not split, and has a Cartan involution defined over $F$. If $Y$ is a congruence arithmetic manifold of non-positive curvature associated with $G$, we prove that there exists a sequence of Laplace eigenfunctions on $Y$ whose sup norms grow like a power of the eigenvalue.
\end{abstract}

\maketitle

\section{Introduction}

Let $Y$ be a closed Riemannian manifold of dimension $n$ and with Laplace operator $\Delta$.  Let $\{ \psi_i \}$ be an orthonormal basis of Laplace eigenfunctions for $L^2(Y)$, which satisfy $\| \psi_i \|_2 = 1$ and $(\Delta + \lambda_i^2) \psi_i = 0$.  We assume that $\{ \psi_i \}$ are ordered by eigenvalue, so that $0 = \lambda_1 \leqslant \lambda_2 \leqslant \ldots$.  It is an important question in harmonic analysis to determine the asymptotic size of $\psi_i$, i.e. the growth rate of $\| \psi_i \|_\infty$ in terms of $\lambda_i$.  The basic upper bound for $\| \psi_i \|_\infty$, proved by Avacumovi\'c \cite{Av} and Levitan \cite{Le}, is given by
\be
\label{AL}
\| \psi_i \|_\infty \ll \lambda_i^{(n-1)/2}.
\ee
This bound is sharp on the round $n$-sphere. Indeed, the zonal spherical harmonics have peaks of maximal size at the poles of the axis of rotation. More generally, Sogge and Zelditch \cite{SZ} have shown that the compact Riemannian manifolds saturating \eqref{AL} necessarily have points which are fixed by an appropriately large number of geodesic returns, in the sense that a positive measure subset of geodesics passing through such a point are loops.

On the other hand, if $Y$ is negatively curved then its geodesic flow is highly chaotic, and one expects this to be reflected in the asymptotics of the eigenfunctions. For example, the quantum ergodicity theorem of Schnirelman \cite{Schnirelman}, Colin de Verdi\`ere \cite{CdV}, and Zelditch \cite{Zelditch} states that the $L^2$-mass of a density one sequence of Laplacian eigenfunctions on a negatively curved manifold equidistributes to the uniform measure. It is likewise reasonable to expect that if $Y$ has negative curvature then the strong pointwise delocalization bound
\be\label{Lindeloff}
\| \psi_i \|_\infty \ll_\epsilon \lambda_i^\epsilon
\ee
holds with density one. This is akin to the Ramanujan conjecture in the theory of automorphic forms \cite{Sar1993}: a generic sequence of eigenfunctions is tempered. Any sequence violating \eqref{Lindeloff} will be called {\it exceptional}.

Unlike the weak-* setting of the Quantum Unique Ergodicity conjecture \cite{RS}, which posits that in negative curvature the $L^2$-mass of {\it any} sequence of eigenfunctions equidistributes to the uniform measure, there do in fact exist compact manifolds $Y$ of negative curvature violating \eqref{Lindeloff}. The first examples were given in \cite{RS}, with $Y$ being an arithmetic hyperbolic $3$-manifold and the exceptional sequences, of density zero, lying in the image of the theta corresponence from $\Sp_2$. 

For surfaces, the situation is quite different: Iwaniec and Sarnak \cite{IS} conjecture that hyperbolic surfaces do not support exceptional sequences. This is a very difficult problem which would imply the classical Lindel\"of conjecture on the Riemann zeta function in the case of the modular surface. 

This leads to the question of when, exactly, should one expect to find (zero density) violations to \eqref{Lindeloff}. In this paper we give sufficient conditions for a certain class of negatively curved manifolds to support exceptional sequences. Although the question is of interest in this general setting, our techniques are limited to arithmetic locally symmetric spaces. Put succinctly, we show that an arithmetic manifold supports exceptional sequences whenever it has a point with strong Hecke return properties.

\subsection{Statement of results}
\label{sec:statement}

Our main theorem is modelled on a result of Mili\'cevi\'c \cite{Mi}, which, building on \cite{RS}, provides a structural framework for the class of arithmetic hyperbolic $3$-manifolds supporting exceptional sequences.

First recall that an arithmetic hyperbolic $3$-manifold arises from the following general construction. Let $E$ be a number field having exactly one complex embedding, up to equivalence, and let $F$ be its maximal totally real subfield. For a division quaternion algebra $B$ over $E$, ramified at all real places of $E$, denote by $G$ the restriction of scalars of $B^1$ from $E$ to $F$. Then any arithmetic hyperbolic $3$-manifold is commensurable with a congruence manifold associated with $G$.

Following \cite{Mi}, an arithmetic hyperbolic $3$-manifold as above is said to be of Maclachlan--Reid type if $E$ is quadratic over $F$ and there exists a quaternion division algebra $A$ over $F$ satisfying $B=A\otimes_F E$. The main result of {\it loc. cit.} is that Maclachlan--Reid type manifolds support exceptional sequences (in fact, satisfying the same lower bounds as the examples of Rudnick--Sarnak).

Notice that when $B=A\otimes_F E$, the following properties hold. Let $v_0$ be the unique archimedean place of $F$ which ramifies in $E$. By \cite[Theorem 9.5.5]{MacReid} we may assume that $A$ is ramified at $v_0$. Then
\begin{enumerate}
\item $G_{v_0}=\SL_2(\C)$ is noncompact, and non-split (as an $\R$-group);
\item $G_v=\mathbf{H}^1$ (the norm-one Hamiltonian quaternions) is compact for all real $v\neq v_0$;
\item the global involution $\theta: g\mapsto\sigma(g)$ of $G$, where $\sigma$ is the unique non-trivial element in the Galois group of $E$ over $F$, induces a Cartan involution on $G_{v_0}$. Indeed, $G^\theta=A^1$, so that $G^\theta(F_{v_0})=\mathbf{H}^1$ is the maximal compact $\SU(2)$ inside $G_{v_0}=\SL_2(\C)$.
\end{enumerate}

Our main result is an extension of this to a wide range of compact congruence manifolds. 

\begin{theorem}\label{thm1}

Let $F$ be a totally real number field, and let $v_0$ be a real place of $F$.  Let $G/F$ be a connected anisotropic semisimple $F$-group.  We make the following additional assumptions on $G$.

\begin{enumerate}

\item
\label{G1}
$G_{v_0}$ is noncompact, not split, and $\R$-almost simple.

\item
\label{G2}
$G_v$ is compact for all real $v \neq v_0$.

\item
\label{G3}
There is an involution $\theta$ of $G$ defined over $F$ that induces a Cartan involution of $G_{v_0}$.

\end{enumerate}

Let $Y$ be a congruence manifold associated with $G$ as in Section \ref{adelic-quotients}.  Then there exists $\delta > 0$, depending only on $G$, and a sequence of linearly independent Laplacian eigenfunctions $\psi_i$ on $Y$ that satisfy
\[
\| \psi_i \|_2 = 1,\quad (\Delta + \lambda_i^2) \psi_i = 0,\quad\text{ and }\quad \| \psi_i \|_\infty \gg \lambda_i^\delta.
\]

\end{theorem}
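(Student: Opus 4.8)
The plan is to mimic the Rudnick--Sarnak / Mili\'cevi\'c construction in this general setting. The key idea is to exploit the involution $\theta$ from hypothesis \eqref{G3}: let $H = (G^\theta)^\circ$, so that $H(F_{v_0})$ is a maximal compact subgroup of $G_{v_0}$, while $H_v$ is compact for all real $v$ (being contained in the compact $G_v$ for $v \neq v_0$, and compact at $v_0$ by construction). Thus $H$ is an anisotropic $F$-group all of whose archimedean completions are compact. This means that automorphic forms on $H$ are essentially finite-dimensional packets of data, and in particular the regular representation $L^2(H(F)\backslash H(\mathbb A_F))$ decomposes discretely with each irreducible occurring with finite multiplicity; more importantly, the trivial representation of $H(F_{v_0})$ (or low-dimensional $H_{v_0}$-types) appears in many automorphic representations. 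The strategy is then to produce eigenfunctions on $Y$ that are, in an appropriate sense, concentrated along the sub-locally-symmetric space attached to $H$, which is zero-dimensional at the relevant place, so that mass concentrates at a point.

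The concrete steps I would carry out are as follows. First, set up the adelic quotient: $Y$ corresponds to $G(F)\backslash G(\mathbb A_F) / K_f K_\infty$ for a suitable compact open $K_f$ and $K_\infty = \prod_{v \neq v_0} G_v$, so that analysis on $Y$ is analysis on $G(F)\backslash G(\mathbb A_F)/K_f$ restricted to functions right-invariant under $K_\infty$, with the Laplacian given by the Casimir at $v_0$. Second, fix a point $x_0 \in Y$ lying on the image of $H(\mathbb A_F)$; the Hecke return property we need is that for a suitable Hecke operator $T_p$ at a split prime $p$, the orbit $T_p^k x_0$ returns to a small neighborhood of $x_0$ with high multiplicity --- this is where the assumption that $G_{v_0}$ is \emph{not split} enters, since it forces the relevant torus/root datum at $v_0$ to be anisotropic enough that the $\theta$-fixed subgroup $H_{v_0}$ is a genuinely large compact group and the combinatorics of Hecke returns along $H$ give exponential multiplicity. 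Third, build a test function: take $f = \prod_v f_v$ where $f_{v_0}$ is a spherical bump concentrating at eigenvalue $\lambda$, $f_p = T_p^k$, and $f_v = \mathbf 1_{K_v}$ elsewhere; form the automorphic kernel and use positivity (the standard amplification/pretrace argument) to show that its value at $(x_0, x_0)$ is large --- of size a power of $\lambda$ larger than the trivial bound --- because of the coincidences in the $H$-orbit. Fourth, spectrally expand the kernel: since each $\psi_i$ is a Laplace eigenfunction, $\sum_i \hat f(\lambda_i) |\psi_i(x_0)|^2$ is large, and by localizing $\hat f$ near $\lambda$ one extracts a single $\psi_i$ (or a short spectral window) with $|\psi_i(x_0)|^2 \gg \lambda^\delta$, giving the sup norm lower bound; linear independence follows by varying $\lambda$ along a sequence.

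The main obstacle, I expect, is the third step: establishing the precise exponential lower bound on the number of Hecke returns along $H$, i.e.\ counting the volume of $\{ g \in H(F_p) : g \in \text{supp}(T_p^k) \}$ and showing it dominates the off-diagonal error terms. This requires understanding the relative position of the support of the $G$-Hecke operator $T_p$ with the subgroup $H(F_p)$, which is a question about the $\theta$-twisted Cartan decomposition at $p$; the non-split hypothesis at $v_0$ must be leveraged (via the global structure of $\theta$ and the place $v_0$ ramifying appropriately) to guarantee that $H$ has positive rank at enough finite places $p$ for the count to beat the diagonal contribution. Controlling the error in the pretrace formula --- bounding the contribution of $\gamma \in G(F)$ with $\gamma x_0$ far from $x_0$ but still in the support of the archimedean bump --- is technically the most delicate part, and will require a careful choice of the amplifier length $k$ relative to $\lambda$, exactly as in \cite{RS}, \cite{Mi}, but now with the representation theory of the noncompact, non-split, $\R$-almost simple group $G_{v_0}$ replacing $\SL_2(\mathbb C)$.
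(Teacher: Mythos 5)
Your high-level plan is the correct one and follows the Mili\'cevi\'c/relative-trace-formula route that the paper actually carries out: pick a point on the image of $H(\A)$, amplify with Hecke operators, evaluate the automorphic kernel there, and argue that Hecke returns of the $H$-period make the geometric side large. However, there is a genuine gap in step four. Making the diagonal value $K(x_0,x_0) = \sum_i \widehat{f}(\psi_i)\,|\psi_i(x_0)|^2$ large does \emph{not} by itself let you ``extract a single $\psi_i$'' with $|\psi_i(x_0)|$ large: you must first control how many $\psi_i$ contribute near spectral parameter $\xi$ and how large the amplifier weights $|\widehat{\omega}_S(\psi_i)|^2$ can be. With no a priori bound towards Ramanujan at the finite places, a trivial upper bound on $|\widehat{\omega}_S(\psi_i)|^2$ can eat the entire gain you built into the amplifier, and the local Weyl law alone does not repair this. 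The paper resolves this by running a \emph{second} trace formula, the (amplified) Arthur--Selberg trace formula on $G$ (Theorem~\ref{trace-guy}), with the same test function but without the $H$-integration: this yields the complementary upper bound $\mathrm{Vol}_N \sum_i |\widehat{\omega}_S(\psi_i)|^2 h_\xi(\xi_i) \ll \omega_S\omega_S^*(1)\beta(\xi)$. Dividing the two asymptotics, rather than trying to bound individual Hecke eigenvalues, is what makes the extraction work; this comparison is the core mechanism and is absent from your writeup.

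Two smaller remarks. First, the ``exponential multiplicity of Hecke returns'' intuition is implemented in the paper as a precise cocharacter inequality, the $H$-large condition $2\|\mu\|_H^* \geqslant \|\mu\|^*$ for some nonzero $\mu \in X_*(T_H)$ (Definition~\ref{large}, Lemmas~\ref{link2thm} and~\ref{nu-choice}); this is what the non-split hypothesis at $v_0$ actually buys, via \cite[Theorem~8.3]{Ma}, and is rather cleaner than a case analysis on the torus at $v_0$. Second, you correctly flag that bounding off-diagonal contributions is delicate, but you underestimate where the work lies: the hard part is not the Hecke-return count but the uniform bounds on orbital integrals needed to make the $G$-trace formula quantitative in both the level and eigenvalue aspects---the $p$-adic inputs from Shin--Templier and Finis--Lapid, and especially the archimedean orbital integral estimates for singular test functions that the paper proves from scratch in Sections~\ref{ROI} and~\ref{RBT}. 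Without these, even the non-amplified comparison does not close.
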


We note that the exponent $\delta$ in Theorem \ref{thm1}, and Theorem \ref{thm2} below, is currently logically ineffective. This comes from using a result of Cluckers--Gordon--Halupczok \cite[Theorem 14.1]{ST} to bound orbital integrals. (Note that Shin--Templier present an alternate, effective proof of \cite[Theorem 14.1]{ST} in Theorem 7.3 of \textit{loc. cit.}, but this has an error as described on S.-W. Shin's webpage.)  The bound of Cluckers--Gordon--Halupczok is ineffective due to their use of model theory.  Julia Gordon informs us that, if the residue characteristic is larger than a logically ineffective bound, it may be possible to transfer the constants in their result from the case of positive characteristic and explicate them in that case.  However, this is presently unknown.  Moreover, even if this were possible, the implied constant in Theorems \ref{thm1} and \ref{thm2} would still be logically ineffective because of the condition that the residue characteristic be sufficiently large.

An additional source of ineffectiveness of $\delta$ comes from using theorems of Finis--Lapid \cite{FL} when the congruence manifold varies as in Theorem \ref{thm2}, although this ineffectiveness is in the weaker sense that the constants are not numerically explicit.  Finis and Lapid inform us that their results could probably be made effective with extra work; in fact, as the referee has pointed out, the particular shape of the congruence subgroups, which depend on $\theta$, in Theorem \ref{thm2} should allow for the exponents arising from their work to be more readily explicated.

\subsection{Remarks on the main theorem}
A well-known theorem of Borel \cite{Borel63} addresses the question of whether one can find many groups satisfying the rationality hypothesis \eqref{G3}. One consequence of his theorem is that for any connected, simply-connected, semisimple algebraic $\R$-group $G$ satisfying condition \eqref{G1}, Theorem \ref{thm1} produces a manifold $Y$ of the form $\Gamma \backslash G / K$ with an exceptional sequence of eigenfunctions. See Section \ref{sec-Borel} for more details and a concrete example.

Theorem \ref{thm1} goes some distance toward answering the basic question of determining the precise conditions under which one should expect a Lindel\"of type bound on a compact congruence negatively curved manifold.  The three numbered conditions on the group $G$ are a particularly convenient way of asking that a large enough compact subgroup of $G_\infty$ admits a rational structure, which is a key ingredient in our proof.  Although the condition that $G_{v_0}$ is not split should be necessary, we expect that the other conditions can be relaxed somewhat. For example, throughout most of the paper, the condition that $G_{v_0}$ is $\R$-almost simple could be weakened to $G$ being $F$-almost simple. The stronger form of this condition is only used in Lemma \ref{BPlemma}, to simplify the application of a theorem of Blomer--Pohl \cite[Theorem 2]{BP} and Matz--Templier \cite[Proposition 7.2]{MT}.

Besides the results of Rudnick--Sarnak and Mili\'cevi\'c that we have already mentioned, both in the context of arithmetic hyperbolic $3$-manifolds, there are other results in the literature which provide examples of arithmetic manifolds supporting exceptional sequences. For instance, the techniques of Rudnick--Sarnak were generalised to $n$-dimensional hyperbolic manifolds for $n \geqslant 5$ by Donnelly \cite{Do}. Later, Lapid and Offen in \cite{LO} discovered a series of arithmetic quotients of $\SL(n,\C) / \SU(n)$ admitting large eigenforms through the link with automorphic $L$-functions (conditionally on standard conjectures on the size of automorphic $L$-functions at the edge of the critical strip). To our knowledge \textit{loc. cit.} is the only example of power growth lower bounds for sup norms on non-compact quotients, at least if one restricts to a compact set so that the large peaks of eigenfunctions in the cusp do not play a role. Note that Theorem \ref{thm1} includes the examples of Rudnick--Sarnak, Donnelly, and Mili\'cevi\'c, although without explicit exponents. It is unable to reproduce the examples of Lapid--Offen due to the compactness requirement, but -- as was indicated above -- it can produce compact quotients of $\SL(n,\C) / \SU(n)$ with an exceptional sequence of eigenfunctions. In fact, non-compact quotients (of suitable level structure) should also be amenable to our techniques, via an application of simple trace formulae, but we have not pursued this here. 

Finally, while our approach was largely inspired by that of Mili\'cevi\'c, we have made an effort to emphasize (in Appendix \ref{sym-var}) the common features it shares with the techniques of Rudnick--Sarnak and Lapid--Offen. A synthesis of the subject, as well as a general conjecture restricting the possible limiting exponents for exceptional sequences, can be found in the influential letter \cite{SarMor}.

\subsection{A hybrid result in the level-eigenvalue aspect}

We in fact prove a stronger result than that described in Theorem \ref{thm1}, establishing a lower bound in the level and eigenvalue aspects simultaneously.  We present this separately, as it requires more care to state; indeed, any notion of non-trivial lower bound must overcome the lower bound one may prove when the eigenspaces have large dimension.  More precisely, if $M$ is a compact Riemannian manifold and $V$ is the space of $\psi \in L^2(M)$ with a given Laplace eigenvalue, one may show that there is $\psi \in V$ satisfying $\| \psi \|_\infty \geqslant \sqrt{\dim V} \| \psi \|_2$.

If we consider a tower of congruence covers $Y_N$ of $Y$, then the Laplace eigenspaces will have growing dimension because of multiplicities in the corresponding representations at places dividing $N$.  Computationally, one observes that this (and its stronger form involving Arthur packets) is the only source of dimension growth.  Although we believe that the dimensions of the joint eigenspaces we consider should be small (partly as a result of our choice of ``large'' congruence subgroup), we do not know how to prove this in general. As a result, we shall be satisfied if we can beat the bound $\sqrt{\dim V}$, where $V$ is now a space of Hecke--Maass forms with the same Laplace and Hecke eigenvalues.  This motivates the following definitions.

Let $G$ be as in Theorem \ref{thm1}. Let $H$ be the identity component of the group of fixed points of $\theta$. We let $D$ be a positive integer such that $G$ and $H$ are unramified at places away from $D$ and $\infty$; see Section \ref{sec:AG} for a precise definition. Let $K$ and $K_H$ be compact open subgroups of $G(\A_f)$ and $H(\A_f)$ that are hyperspecial away from $D$, as in Sections \ref{cmp-sg} and \ref{adelic-quotients}. If $N$ is a positive integer prime to $D$, we let $K(N)$ be the corresponding principal congruence subgroup of $K$, and define $Y_N = G(F) \backslash G(\A) / K(N) K_H K_\infty$.  We give each $Y_N$ the probability volume measure.

Let $A \subset G_\infty$ be a maximal $\R$-split torus with real Lie algebra $\ga$ and Weyl group $W$.  We let $\ga_\C = \ga \otimes \C$.  Let $G_\infty^0$ be the connected component of $G_\infty$ in the real topology.  Any unramified irreducible unitary representation of $G_\infty^0$ gives rise to an element $\xi \in \ga_\C^* / W$ via the Harish-Chandra isomorphism, which we have normalised so that the tempered spectrum corresponds to $\ga^* / W$.  We let $\| \cdot \|$ be the norm on $\ga$ and $\ga^*$ coming from the Killing form and extend it naturally to their complexifications. If $\mu, \lambda \in \ga_\C^* / W$ we will sometimes abuse notation and write $\| \mu - \lambda \|$ to mean the minimum of this norm over representatives for the $W$-orbits.

By a Hecke--Maass form we mean a joint eigenfunction $\psi\in L^2(Y_N)$ for the Hecke algebra (away from $N$ and $D$) and the ring of invariant differential operators $\mathcal{D}$ on $Y_N$.  We may view the associated eigenvalues as elements in the unramified unitary dual of $G_v$ at finite places $v$ (via the Satake isomorphism), while at infinity they determine an element $\xi \in \ga_\C^* / W$. We define a spectral datum $c$ for $(G,N)$ to be a choice of element $\xi(c) \in \ga_{\C}^* / W$ and an element $\pi_v(c)$ in the unramified unitary dual of $G_v$ for all $v \nmid ND\infty$.  Given a spectral datum $c$ for $(G,N)$, we define $V(N,c)$ to be the space of Hecke--Maass forms on $Y_N$ whose $\mathcal{D}$-eigenvalues are given by $\xi(c)$ (the spectral parameter) and whose Hecke eigenvalues at $v \nmid ND \infty$ are given by $\pi_v(c)$.

\begin{theorem}\label{thm2}

With the notation and hypotheses of Theorem \ref{thm1}, there is $\delta>0$, depending only on $G$ and $\theta$, and $Q>1$ with the following property. For any positive integer $N$ with $(N, D) = 1$ and spectral parameter $\xi \in \ga^*$ such that $N (1+\|\xi\|)$ is sufficiently large, there is a spectral datum $c$ for $(G,N)$ with $\|\xi(c)-\xi\|\leqslant Q$ and a Hecke--Maass form $\psi\in V(N,c)$ such that 
\[
\| \psi \|_\infty \gg N^\delta  (1+\|\xi\|)^\delta \sqrt{ \dim V(N,c)}\|\psi\|_2.
\]

\end{theorem}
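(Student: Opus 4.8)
The plan is to run a positivity argument for a pre-trace formula at a distinguished point $x_0 \in Y_N$, the dominant term on the geometric side being produced by lattice points of the subgroup $H$, which is forced to be large by the non-split hypothesis. To set things up: take $K_\infty$ to contain $K_{v_0}$ and let $x_0$ be the image of $1 \in G(\A)$ in $Y_N$; by \eqref{G1} and \eqref{G3} the involution $\theta$ identifies $H(F_{v_0})$ with $K_{v_0}$, so $H_\infty$ is compact and $x_0$ lies on the finite ``$H$-cycle'' in $Y_N$. Choose a test function $f = f_{v_0} \otimes f_N \otimes f^{ND}$ where $f_{v_0}$ is a bi-$K_{v_0}$-invariant point-pair invariant whose spherical transform $\widehat{f_{v_0}}$ is a fixed nonnegative bump of width $Q = O(1)$ centred at $\xi$ (so $\widehat{f_{v_0}} \geqslant 0$, and $f_{v_0}(1)$ is the Plancherel mass of the bump); $f_N$ is the normalised indicator of $K(N)$, forcing $\gamma \equiv 1 \bmod N$; and $f^{ND} = a * a^*$ is the convolution square of a Hecke amplifier $a = \sum_{p \in P} x_p T_p$ over a set $P$ of primes at each of which $H$ is split, of size $|P| \asymp (N(1+\|\xi\|))^{\kappa}$, with the $x_p$ chosen in the usual way to match the Hecke eigenvalues of a spectral datum $c$ whose $\xi(c)$ lies in the support of $\widehat{f_{v_0}}$. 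Since $\widehat{f_{v_0}}$ and $f^{ND}$ are built as squares, the pre-trace identity
\[
\sum_j \widehat{f}(\pi_j)\, |\psi_j(x_0)|^2 \;=\; K_f(x_0, x_0) \;=\; \sum_{\gamma \in G(F)} f(\gamma)
\]
has all spectral terms nonnegative, and it suffices to bound $K_f(x_0,x_0)$ from below and the total mass $\sum_j \widehat{f}(\pi_j)$ from above.

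The geometric main term comes from the \emph{$H$-returns}, the $\gamma \in G(F)$ with $\gamma_{v_0} \in K_{v_0}$: for these $f_{v_0}(\gamma) = f_{v_0}(1)$ attains its maximum, in particular is positive, so they contribute with no cancellation. The point is that, $H$ being defined over $F$ with $H(F_{v_0}) = K_{v_0}$, every $\gamma \in H(\cO_F[1/P])$ automatically has $\gamma_{v_0} \in K_{v_0}$ — an element of $H$ stays bounded at $v_0$ however deep it becomes at the primes of $P$. Restricting to the $\gamma \in H(\cO_F[1/P])$ that are $\equiv 1 \bmod N$ and lie in the amplifier double cosets at a pair of primes of $P$ (no more than two, since $f^{ND} = a * a^*$ is a sum of products of at most two Hecke operators), a geometry-of-numbers count shows that their contribution to $K_f(x_0,x_0)$ exceeds the naive main term $\Vol(Y_N) f(1)$, hence the total mass $\sum_j \widehat f(\pi_j)$, by a factor $\gg (N(1+\|\xi\|))^{2\delta}$. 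Here the non-split hypothesis \eqref{G1} is essential: it forces $K_{v_0} = H_{v_0}$ to be large — at least half-dimensional in $G_{v_0}$ — so that the $H(F_p)$-orbits sitting inside the $G(F_p)$-Hecke cosets, whose size is governed by $\langle 2\rho_H, \lambda\rangle$ rather than $\langle 2\rho_G, \lambda\rangle$, are numerous enough for this surplus to be positive; for split $G_{v_0}$ (e.g.\ a hyperbolic surface, where $K_{v_0}$ is a circle) there is no such surplus, in agreement with the Iwaniec--Sarnak conjecture.

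To conclude, I would bound the remaining $\gamma \in G(F)$: those with $\gamma_{v_0}$ at bounded distance from but outside $K_{v_0}$, where $f_{v_0}$ is smaller by a power of $1 + \|\xi\|$, and those with large archimedean displacement, where $f_{v_0}$ decays exponentially. In each case a Hecke return cannot be simultaneously deep at the primes of $P$ and close to $1$ at $v_0$, so these contributions are controlled by the lattice-point and orbital-integral estimates of Blomer--Pohl \cite[Theorem 2]{BP} and Matz--Templier \cite[Proposition 7.2]{MT} — their uniform application (Lemma \ref{BPlemma}) being precisely why $G_{v_0}$ is assumed $\R$-almost simple — and are absorbed into the $H$-surplus once $\kappa$ and the amplifier power are taken large enough. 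The amplifier, matched to $c$, also confines the effective spectral support to forms whose Hecke parameters lie near $c$, cutting the count of competing $\psi_j$ down to $\dim V(N,c)$ up to a power of $|P|$ that is reabsorbed into $\delta$; together with the pointwise local Weyl law this produces a point $x_0$, a datum $c$ with $\|\xi(c) - \xi\| \leqslant Q$, and the inequality $\sum_{j \in V(N,c)} |\psi_j(x_0)|^2 \gg (N(1+\|\xi\|))^{2\delta}\dim V(N,c)$. Taking $\psi = \sum_{j \in V(N,c)} \overline{\psi_j(x_0)}\,\psi_j$, the reproducing kernel of $V(N,c)$ at $x_0$, one has $\|\psi\|_2^2 = \psi(x_0) = \sum_{j \in V(N,c)} |\psi_j(x_0)|^2$, whence $\|\psi\|_\infty \geqslant \psi(x_0) = \|\psi\|_2^2 \gg (N(1+\|\xi\|))^{\delta}\sqrt{\dim V(N,c)}\,\|\psi\|_2$. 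The hard part is this final balancing: one must show the $H$-surplus dominates, uniformly and simultaneously in $N$ and $\|\xi\|$, both the Weyl--Plancherel term $\Vol(Y_N)f(1)$ and the entire contribution of the non-central, non-$H$ conjugacy classes.
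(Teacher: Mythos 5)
Your proposal correctly identifies the skeleton of the proof: a positive pre-trace inequality at the distinguished point $x_0$ on the $H$-cycle (which, since $H_\infty$ is compact, is essentially the integrated relative trace formula over $[H]\times[H]$ that the paper uses), $H(F)$-lattice points as the source of the geometric surplus, Diophantine estimates plus spherical-function decay (Blomer--Pohl, Matz--Templier) to bound the non-$H$ terms, and comparison with the trace-formula upper bound on $\sum_j \widehat{f}(\pi_j)$. You also correctly isolate the cocharacter inequality comparing $\langle 2\rho_H,\lambda\rangle$ with $\langle 2\rho_G,\lambda\rangle$ (Definition \ref{large}) as the mechanism, and you correctly note that $\R$-almost simplicity is used via Lemma \ref{BPlemma}. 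However, there is one genuine gap.

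The gap is in the Hecke test function. You propose an Iwaniec--Sarnak amplifier $a = \sum_{p} x_p T_p$ with coefficients $x_p$ matched to the Hecke eigenvalues of a fixed spectral datum $c$. This fails on two counts. First, the $H$-surplus you need is $\Pi_H(a*a^*)(1) \gg |P|^2$, and
\[
\Pi_H(a*a^*)(1) \;=\; \sum_{p,q}\, x_p\overline{x_q}\, \Pi_H\bigl(T_p T_q^*\bigr)(1).
\]
The paper's key input, Lemma \ref{nu-choice}, shows that each summand $\Pi_H(\tau(v,\nu)\tau(w,\nu)^*)(1)$ with $v\neq w$ is $\gg 1$ (and positive) \emph{provided} $\nu\in X_*(T_H)$ satisfies $2\|\nu\|_H^* \geqslant \|\nu\|^*$; with all coefficients equal to $1$ the off-diagonal terms add without cancellation, giving $\gg |P|^2$, whereas $\omega_S\omega_S^*(1)\ll |P|$. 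If you instead weight by oscillating $x_p\overline{x_q}$ with $|x_p|=1$, the off-diagonal sum can cancel down to $O(|P|)$, and the surplus vanishes. Second, and independently, the datum $c$ you want to amplify is an \emph{output} of Theorem \ref{thm2}, not an input; fixing $x_p$ to match $c$ before you know such a $c$ exists is circular. The paper's resolution is to use \emph{no} amplification in the Iwaniec--Sarnak sense: it takes the unweighted $\omega_S = \sum_{v\in S}\tau(v,\nu)$ for a single $\nu$ satisfying the $H$-large inequality (whose existence is the content of Lemma \ref{link2thm}, via the non-split hypothesis and the Cartan involution being rational), feeds it into both trace formulae, and then extracts a good $c$ \emph{a posteriori} by pigeonhole over the finite list of spectral data near $\xi$, using the codimension-$\leqslant 1$ structure of $\ker\mathcal{P}_H$ in each $V(N,c)$. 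Your reproducing-kernel endgame coincides with that pigeonhole once $c$ is produced, but the step producing $c$ and the surplus must come from the unweighted Hecke sum with the specially chosen cocharacter, not from a phase amplifier.
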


Note that a Hecke--Maass form as in Theorem \ref{thm1} has Laplacian eigenvalue of size roughly $(1+\|\xi\|)^2$. Theorem \ref{thm2} therefore implies Theorem \ref{thm1}.  Moreover, taking $\xi$ at distance at least $Q$ from the root hyperplanes ensures that the eigenfunction $\psi$ produced by the theorem is tempered at infinity.  As in Theorem \ref{thm1}, the exponent $\delta$ is ineffective.

The only previous results giving lower bounds in the level aspect are for $\GL_2$ over a number field, due to Saha \cite{Sh} and Templier \cite{T}. They use the fact that local Whittaker functions of highly ramified $p$-adic representations are large high in the cusp, and in particular rely on the noncompactness of the manifold.

\subsection{Borel's theorem and a concrete example}\label{sec-Borel}

A classical theorem of Borel on the existence of rationally defined Cartan involutions on real semisimple Lie algebras can be used to provide examples of groups satisfying the hypotheses of Theorem \ref{thm1}.\footnote{It is an interesting question whether condition \eqref{G3} on the existence of a rational Cartan involution is automatic or not.  We believe that it is not when $G$ is almost simple of type $A_n$, $D_n$, or $E_6$, but are unsure otherwise.} We state this as the following result and provide details for how to extract this statement from the work of Borel in Section \ref{sec:Borel}.

\begin{prop}
\label{admissibleexist}

Let $G'/\R$ be connected, simply connected, and $\R$-almost simple. Let $F$ be a totally real number field, and let $v_0$ be a real place of $F$. There is a connected semisimple group $G/F$ with $G_{v_0} \simeq G'$ that satisfies conditions \eqref{G2} and \eqref{G3} of Theorem \ref{thm1}.

\end{prop}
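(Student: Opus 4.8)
The plan is to deduce the proposition from a theorem of Borel, which furnishes a compact $\Q$-form of the complexification $G'_\C$ equipped with a rationally defined involution realizing the Cartan involution of $G'$, and then to build the desired $F$-group from it by base change and a quadratic twist.

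The first step is to isolate what must be produced. Fix a Cartan involution $\theta'$ of $G'$ and let $G_u$ be the compact real form of $G'_\C$, which is unique up to conjugacy. Writing $\sigma_{G'}, \sigma_u$ for the anti-holomorphic involutions of $G'_\C$ cutting out $G'$ and $G_u$, and $\theta'_\C$ for the holomorphic extension of $\theta'$, one has the standard relation $\sigma_{G'} = \theta'_\C \circ \sigma_u = \sigma_u \circ \theta'_\C$; equivalently, $G'$ is the twist of $G_u$ by the $\mathrm{Gal}(\C/\R)$-cocycle sending complex conjugation to $\theta'_\C$. It therefore suffices to produce: (i) a connected semisimple $F$-group $\mathbf{G}_u$, simply connected and compact at every real place of $F$, together with an $F$-rational involution $\theta$ whose base change to $F_{v_0} = \R$ is conjugate to $\theta'_\C$ on $\mathbf{G}_u(\R) = G_u$; and (ii) a quadratic extension $E/F$ in which $v_0$ is nonsplit while every other real place of $F$ splits. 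Granting these, put $G := {}_\phi\mathbf{G}_u$, the twist of $\mathbf{G}_u$ by the cocycle $\phi$ of $\mathrm{Gal}(E/F)$ carrying the nontrivial element $\gamma$ to $\theta$; this is a cocycle since $\theta$ is $F$-rational, so that $\theta \cdot {}^\gamma\theta = \theta^2 = 1$. Then $G$ is again connected, simply connected and semisimple; since $\phi$ takes values in automorphisms commuting with $\theta$, a short computation shows that $\theta$ descends to an $F$-involution of $G$; at each real place $v \neq v_0$ the \'etale algebra $E \otimes_F F_v$ is split, so $\phi$ is locally trivial and $G_v \cong \mathbf{G}_u(F_v)$ is compact, which is \eqref{G2}; and at $v_0$, where $E_{v_0}/F_{v_0} = \C/\R$, the twist of $\mathbf{G}_u$ by $\phi$ is exactly the twist of $G_u$ by complex-conjugation $\mapsto \theta'_\C$, so $G_{v_0} \simeq G'$ with $\theta$ restricting to the Cartan involution $\theta'$, which is \eqref{G3}.

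Step (ii) is elementary: take $E = F(\sqrt{\alpha})$ for any $\alpha \in F^\times$ that is negative at $v_0$ and positive at the remaining real places, which exists by weak approximation since there are only finitely many archimedean places. Step (i) is the substance of Borel's theorem \cite{Borel63}, whose relevant portion we make explicit in Section~\ref{sec:Borel}: the existence of a $\Q$-anisotropic form of $G'_\C$ compact at the real place is well known from the theory of cocompact arithmetic groups, and Borel's point is that the conjugacy class of involutions of $G_u$ singled out by $G'$ can be realized $\Q$-rationally, which furnishes the pair $(\mathbf{G}_u, \theta)$ over $\Q$; base change along $\Q \hookrightarrow F$ keeps $\mathbf{G}_u$ compact at every real place of $F$, so we may take $(\mathbf{G}_u,\theta)$ over $F$. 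When $G'$ is not absolutely simple over $\R$, so that $G' \cong \Res_{\C/\R} G''$, the scheme is unchanged: here $\mathbf{G}_u$ is a restriction of scalars and $\theta$ a possibly twisted Galois involution, and $G$ assumes the shape $\Res_{E/F}(\mathbf{H}_E)$ of the Maclachlan--Reid example of the introduction. One can moreover arrange the finite ramification, following Borel, so that $G$ is $F$-anisotropic, although this is not needed for the present statement.

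The step I expect to be the main obstacle is (i): exhibiting the compact form of $G'_\C$ together with a rationally defined involution of the prescribed type, since for the exceptional groups this relies on Borel's theorem rather than on an explicit matrix model. Everything else --- the choice of $E/F$ and the bookkeeping that the quadratic twist has the advertised local behavior --- is routine Galois cohomology.
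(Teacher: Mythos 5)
Your proposal is correct, and it organizes the construction differently from the paper, but the underlying input from Borel and the ``flip the sign of~$\p_0$'' mechanism are the same.

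The paper's route is to work at the Lie-algebra level: Borel's Proposition~3.7 gives a $\Q$-form $\g_0=\gk_0\oplus\p_0$ of the \emph{noncompact} algebra $\g'$ together with a $\Q$-rational Cartan involution $\theta$, and the $F$-form is obtained in one shot as $\g=\gk_0\otimes F + \sqrt{\alpha}\,\p_0\otimes F$ with $\alpha$ positive at $v_0$ and negative at the other real places; one then passes to the simply connected cover of $\mathrm{Int}(\g)$. You instead stay at the group level and package the twist cohomologically: first produce a pair $(\mathbf{G}_u,\theta)$ over $F$ that is compact at \emph{all} real places, then twist by the cocycle $\gamma\mapsto\theta$ of $\mathrm{Gal}(E/F)$ for a quadratic $E/F$ in which only $v_0$ is nonsplit. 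Tracing through the local behaviour of the twist and the identity $\sigma_{G'}=\theta'_\C\circ\sigma_u$ as you do, the two constructions are seen to coincide (with $\alpha=\beta\alpha'$ in the obvious notation), so the argument is sound. What your version buys is conceptual clarity --- the local conditions (G2) and (G3) drop out immediately from the splitting behaviour of $E/F$ --- and it avoids the Lie-algebra/group translation and the appeal to the Cartan connectedness theorem that the paper uses to pass from $\g$ to $G$.

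One point is worth making precise. Your step~(i) is not quite what Borel's Proposition~3.7 delivers: that result produces a $\Q$-form of the \emph{noncompact} group with a rational Cartan involution, not a $\Q$-form of the compact group $G_u$ with a rational involution in the conjugacy class determined by $G'$. To obtain your $\mathbf{G}_u$ one must first perform the same square-root twist, e.g.\ setting $\mathbf{g}_u=\gk_0\otimes F+\sqrt{\beta}\,\p_0\otimes F$ for $\beta$ totally negative, which is compact at every real place and still carries $\theta$. This is a small extra step, but it means your deduction is ``Borel plus one more twist'' rather than Borel verbatim; alternatively you could twist the noncompact $\Q$-form directly by $E=F(\sqrt{\alpha})$ with $v_0$ split and the remaining real places nonsplit, which matches the paper exactly and dispenses with the intermediate compact form.
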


We now give a concrete example of a family of manifolds to which our theorem can be applied, and which to our knowledge does not already appear in the literature.

Let $F$ be a totally real number field, and let $E$ be a CM extension of $F$. Let the rings of integers of these fields be $\cO_F$ and $\cO_E$ respectively. Let $v_0$ be a distinguished real place of $F$, and let $w_0$ be the place of $E$ over $v_0$. Let $V$ be a vector space of dimension $n+1$ over $E$ with a Hermitian form $\langle \cdot, \cdot \rangle$ with respect to $E/F$. Assume that $\langle \cdot, \cdot \rangle$ has signature $(n,1)$ at $w_0$ and is definite at all other infinite places of $E$.  Let $G$ be the $F$-algebraic group $\SU(V, \langle \cdot, \cdot \rangle)$, so that $G_{v_0} \simeq \SU(n,1)$.

Let $L \subset V$ be an $\cO_E$ lattice on which the form $\langle \cdot, \cdot \rangle$ is integral. Let $L^*$ be the dual lattice $L^* = \{ x \in V : \langle x, y \rangle \in \cO_E \text{ for all } y \in L \}$. Let $\Gamma$ be the group of isometries of $V$ that have determinant 1, preserve $L$, and act trivially on $L^* / L$. If $F \neq \Q$, completion at $w_0$ allows us to consider $\Gamma$ as a discrete, cocompact subgroup of $\SU(n,1)$, which will be torsion free if $L$ is chosen sufficiently small.

One may associate a complex hyperbolic manifold with $\Gamma$ in the following way. Let $D$ denote the space of lines in $V_{w_0}$ on which the Hermitian form is negative definite. $\SU(n,1)$ acts on $D$, and $D$ carries a natural $\SU(n,1)$-invariant metric under which it becomes a model for complex hyperbolic $n$-space. The quotient $Y = \Gamma \backslash D$ is then a compact complex hyperbolic $n$-manifold, and is an example of a congruence manifold associated with $G$ as in Theorem \ref{thm1}.

If $n \geqslant 2$, $G$ satisfies conditions \eqref{G1} and \eqref{G2} of Theorem \ref{thm1}. We now show that \eqref{G3} is satisfied. Let $W \subset V$ be a codimension 1 subspace defined over $E$ such that the Hermitian form is positive definite on $W_{w_0}$. Let $\theta$ be the isometry of reflection in $W$. Then $g \mapsto \theta g \theta^{-1}$ gives an $F$-involution of $G$ that is a Cartan involution on $G_{v_0}$, as required. Theorem \ref{thm1} then implies that there is a sequence of Laplace eigenfunctions $\{ \psi_i \}$ on $Y$ satisfying $\| \psi_i \|_\infty \gg \lambda_i^\delta \| \psi_i \|_2$. 

\subsection{The method of proof}\label{intro:idea}

The proof of power growth for arithmetic hyperbolic $3$-manifolds of Maclachlan--Reid type by Mili\'cevi\'c \cite{Mi} compares an amplified trace and pre-trace formula. Our proof works by extending this method to general groups. The bulk of the work lies in proving asymptotics for the trace formula.  

More precisely, our proof of Theorem \ref{thm2} proceeds by comparing a trace formula on $G$ with a relative trace formula for $G$ with respect to $H$. If we choose a test function $k \in C^\infty_c(G(\A))$, the main geometric terms of these trace formulae are $k(1)$ and $\Pi_H k(1)$ respectively, where $\Pi_H : L^1(G(\A)) \rightarrow L^1(X(\A))$ is given by integration over $H$. In fact, we need to choose test functions of the form $k=\omega * \omega^*$ so that their action on $L^2(G(F) \backslash G(\A))$ is positive semidefinite. After controlling the other geometric terms, we wish to find $\omega$ that makes $\Pi_H k(1)$ large relative to $k(1)$. At finite places, we shall take for $\omega_f$ an appropriately large sum of $L^2$-normalized basic Hecke operators $\tau(\nu,v)$, supported on $K_v\nu(\varpi_v)K_v$, where $\nu$ is a cocharacter of $G$. The condition that $\Pi_Hk(1)$ be large then boils down to 
\[
\left(\Pi_H\tau(\nu,v)\right)(1)=\frac{{\rm vol}(H_v\cap K_v\nu(\varpi_v)K_v)}{{\rm vol}(K_v\nu(\varpi_v)K_v)^{1/2}}
\]
being large, for enough places $v$. Note that this corresponds to our informal description in terms of Hecke returns at the beginning of the introduction: if the projection of the $H$-period onto the given locally symmetric $Y$ is simply a point $p$ (rather than a finite collection of such), the right-hand side above is roughly the multiplicity with which $p$ appears in its image by the Hecke correspondence $\tau(\nu,v)$. In any case, we bound this quantity from below in Lemma \ref{nu-choice}, in terms of the $H$-relative size of $\nu$. The latter is a certain cocharacter inequality, which we show in Lemma \ref{link2thm} is verified under the conditions of our main theorem.

While writing this paper, Erez Lapid pointed out to us that there was another approach to proving Theorems \ref{thm1} and \ref{thm2} based on a theorem of Sakellaridis on the unramified $C^\infty$ spectrum of symmetric varieties.  We have included a discussion of this in Appendix \ref{sym-var}.  We have also included an explanation of why the condition of $G$ being nonsplit at $v_0$ is natural, and motivated our choice of test functions in the trace formula, based on a related conjecture of Sakellaridis and Venkatesh on the $L^2$ spectrum.

\subsection*{Acknowledgements} We would like to thank Tobias Finis, Erez Lapid, Zeev Rudnick, Yiannis Sakellaridis, Sug Woo Shin, Nicolas Templier, and Akshay Venkatesh for helpful conversations during the elaboration of this paper. The first author benefited from many enlightening discussions with Nicolas Bergeron and Djordje Mili\'cevi\'c.  We would also like to thank the referees for suggestions that greatly improved the paper.

\section{Notation and proof of Proposition \ref{admissibleexist}}\label{sec:not}

\subsection{Algebraic groups}\label{sec:AG}

Let $\cO$ denote the ring of integers of $F$.  Let $\A$ and $\A_f$ be the adeles and finite adeles of $F$.  

Let $G$ and $\theta$ be as in Theorems \ref{thm1} and \ref{thm2}.  Let $H$ again denote the identity component of the group of fixed points of $\theta$.  We let $T \subset G$ and $T_H \subset H$ be maximal tori defined over $F$ with $T_H \subset T$.  We fix an $F$-embedding $\rho : G \rightarrow \SL_d$.  Let $Z$ be the center of $G$.

Consider $\SL_d$ as a group scheme over $\cO$ in the standard way, and take the schematic closures of $T_H$, $T$, $H$, and $G$ in $\SL_d/\cO$.  These closures are group schemes over $\cO$, and all the inclusions between them over $F$ extend to closed embeddings over $\cO$.  By \cite[3.9]{Ti}, we may choose $D \in \N$ such that over $\cO[1/D]$, all fibers of these group schemes are smooth, connected, and reductive.

Let $X^*(T)$ and $X_*(T)$ denote the group of characters and cocharacters of $T \times_F \overline{F}$.  Let $\Delta$ be the set of roots of $T$ in $G$, and let $\Delta^+$ be a choice of positive roots.  Let $W$ be the Weyl group of $(G,T)$ over $\overline{F}$.  We define
\bes
X_*^+(T) = \{ \mu \in X_*(T) : \langle \mu, \alpha \rangle \geqslant 0, \alpha \in \Delta^+ \}.
\ees
Similarly, we may define $\Delta_H,\Delta_H^+$, $W_H$, and $X_*^+(T_H)$. Letting $\rho$ and $\rho_H$ denote, as usual, the half-sum of positive roots for $G$ and $H$, we introduce the functions
\begin{align*}
\| \mu \|^* & = \underset{w \in W}{\max} \langle w \mu, \rho \rangle \\
\| \mu \|^*_H & = \underset{w \in W_H}{\max} \langle w \mu, \rho_H \rangle 
\end{align*}
on $X_*(T)$ and $X_*(T_H)$ respectively.  Our assumption that $G$ is semisimple implies that $\| \mu \|^*$ is a norm; the condition that $\| \mu \|^* = \| -\mu \|^*$ follows from the fact that $\rho$ and $-\rho$ lie in the same Weyl orbit. While $\| \mu \|^*_H$ also has the property that $\| \mu \|^*_H = \| -\mu \|^*_H$, it is trivial on cocharacters arising from the center of $H$.

\begin{definition}
\label{large}

We say that $G$ is $H$-large if there exists a nonzero $\mu \in X_*(T_H)$ such that
\be
\label{K-large-cond}
2 \| \mu \|_H^*  \geqslant \| \mu \|^*.
\ee

\end{definition}

Note that this definition is exactly the opposite of the definition of $H$ being small in $G$ given in \cite[Definition 8.1]{Ma}.  By \cite[Lemma 8.2]{Ma}, the definition of $H$-large is independent of the choice of tori $T_H \subset T$.

\begin{lemma}\label{link2thm}
A group $G$ satisfying the hypotheses of Theorem \ref{thm1} is $H$-large. 
\end{lemma}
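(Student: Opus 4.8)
The plan is to exhibit an explicit cocharacter satisfying the inequality in Definition \ref{large}. Since, by \cite[Lemma 8.2]{Ma}, $H$-largeness is independent of the choice of maximal tori $T_H\subseteq T$, and since every object appearing in Definition \ref{large} (the lattices $X_*(T_H)\subseteq X_*(T)$, the root systems $\Delta,\Delta_H$, the Weyl groups, $\rho,\rho_H$) is obtained by base change from the real group $G_{v_0}$ and its subgroup $H_{v_0}$, it suffices to work at $v_0$. Write $\mathfrak{g}_{v_0}=\mathfrak{k}\oplus\mathfrak{p}$ for the Cartan decomposition attached to $\theta$, so that $\mathfrak{k}=\mathfrak{h}_{v_0}=\operatorname{Lie}H_{v_0}$ and $\mathfrak{p}=\mathfrak{q}_{v_0}$. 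Fix a maximal abelian subalgebra $\mathfrak{a}\subseteq\mathfrak{p}$, put $\mathfrak{m}=Z_{\mathfrak{k}}(\mathfrak{a})$, and recall the restricted root space decomposition $\mathfrak{g}_{v_0}=\mathfrak{a}\oplus\mathfrak{m}\oplus\bigoplus_{\lambda\in\Sigma^+}(\mathfrak{k}_\lambda\oplus\mathfrak{p}_\lambda)$, in which $X\mapsto X+\theta X$ and $X\mapsto X-\theta X$ give $\mathfrak{k}$-module isomorphisms $\mathfrak{g}_\lambda\xrightarrow{\sim}\mathfrak{k}_\lambda$ and $\mathfrak{g}_\lambda\xrightarrow{\sim}\mathfrak{p}_\lambda$. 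The crucial input from the hypotheses is that, because $G_{v_0}$ is semisimple, noncompact and \emph{not split}, $Z_{\mathfrak{g}_{v_0}}(\mathfrak{a})=\mathfrak{a}\oplus\mathfrak{m}$ is strictly larger than $\mathfrak{a}$; that is, $\mathfrak{m}\neq 0$.

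Next I would reformulate the two norms as sums of absolute values of eigenvalues. For $\nu\in X_*(T)$ one checks (using that $\rho$ is the half-sum of the positive roots and $\langle\alpha_i^\vee,\rho\rangle=1$, so that $\|\nu\|^*=\langle\nu^+,\rho\rangle$ for the dominant representative $\nu^+$) that $\|\nu\|^*=\tfrac12\sum_{\alpha\in\Delta^+}|\langle\nu,\alpha\rangle|=\tfrac14\sum_{\alpha\in\Delta}|\langle\nu,\alpha\rangle|$, and likewise $\|\nu\|_H^*=\tfrac14\sum_{\alpha\in\Delta_H}|\langle\nu,\alpha\rangle|$ for $\nu\in X_*(T_H)$. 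Since $\operatorname{ad}(d\nu(1))$ acts on the root space $\mathfrak{g}_\alpha$ by the scalar $\langle\nu,\alpha\rangle$, this is exactly the assertion that
\[
4\|\nu\|^*=\sum_{x}\bigl|\,\text{eigenvalue of }\operatorname{ad}(d\nu(1))\text{ on }x\,\bigr|,
\]
the sum over a basis of eigenvectors of $\operatorname{ad}(d\nu(1))$ in $\mathfrak{g}_{v_0}\otimes\C$, and that for $\nu\in X_*(T_H)$ (so that $d\nu(1)\in\mathfrak{k}\otimes\C$) the quantity $4\|\nu\|_H^*$ is the same sum taken over $\mathfrak{k}\otimes\C$.

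With these preliminaries in place, I would choose the cocharacter as follows. After replacing $\mathfrak{a}$ by a $K$-conjugate (where $K=H_{v_0}$ preserves $\mathfrak{k}$ and $\mathfrak{p}$), we may assume a fixed nonzero subtorus $\mathfrak{t}_{\mathfrak{m}}\subseteq\mathfrak{m}$ lies in $\operatorname{Lie}(T_H\times_F\R)$; choose a nonzero $\mu\in X_*(T_H)$ with $M:=d\mu(1)\in\mathfrak{t}_{\mathfrak{m}}\otimes\C$, possible because $\dim\mathfrak{t}_{\mathfrak{m}}\geqslant 1$. As $\mathfrak{t}_{\mathfrak{m}}$ centralizes $\mathfrak{a}$, the operator $\operatorname{ad}(M)$ annihilates $\mathfrak{a}\otimes\C$ and preserves each $\mathfrak{g}_\lambda\otimes\C$; as $M\in\mathfrak{k}\otimes\C$ and $\theta$ is the identity on $\mathfrak{k}$, the isomorphisms $\mathfrak{g}_\lambda\xrightarrow{\sim}\mathfrak{k}_\lambda$ and $\mathfrak{g}_\lambda\xrightarrow{\sim}\mathfrak{p}_\lambda$ intertwine the action of $\operatorname{ad}(M)$, so $\operatorname{ad}(M)$ has the same multiset of eigenvalues on $\mathfrak{k}_\lambda\otimes\C$, on $\mathfrak{g}_\lambda\otimes\C$ and on $\mathfrak{p}_\lambda\otimes\C$. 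Summing absolute values of eigenvalues over $\mathfrak{g}_{v_0}\otimes\C=\bigl(\mathfrak{m}\oplus\bigoplus_\lambda\mathfrak{k}_\lambda\bigr)\otimes\C\ \oplus\ \bigl(\mathfrak{a}\oplus\bigoplus_\lambda\mathfrak{p}_\lambda\bigr)\otimes\C$ then gives
\[
4\|\mu\|_H^*=\sum_{\mathfrak{k}\otimes\C}|\cdot|=\sum_{\mathfrak{m}\otimes\C}|\cdot|+\sum_{\lambda}\sum_{\mathfrak{k}_\lambda\otimes\C}|\cdot|\ \geqslant\ \sum_{\lambda}\sum_{\mathfrak{p}_\lambda\otimes\C}|\cdot|=\sum_{\mathfrak{p}\otimes\C}|\cdot|,
\]
using that $\operatorname{ad}(M)$ vanishes on $\mathfrak{a}\otimes\C$. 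Hence $4\|\mu\|^*=\sum_{\mathfrak{k}\otimes\C}|\cdot|+\sum_{\mathfrak{p}\otimes\C}|\cdot|\leqslant 2\sum_{\mathfrak{k}\otimes\C}|\cdot|=8\|\mu\|_H^*$, i.e. $2\|\mu\|_H^*\geqslant\|\mu\|^*$ with $\mu\neq 0$, which is precisely the condition that $G$ be $H$-large.

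The step I expect to be the main point is the last paragraph: recognising that the right cocharacter to test is one supported on the ``anisotropic kernel'' $\mathfrak{m}$ — and that non-splitness of $G_{v_0}$ is exactly the statement $\mathfrak{m}\neq 0$ — together with the matching of eigenvalue multisets across $\mathfrak{g}_\lambda$, $\mathfrak{k}_\lambda$ and $\mathfrak{p}_\lambda$. By contrast, the reduction to the real group, the passage between $\|\cdot\|^*$ and eigenvalue sums, and the $K$-conjugation needed to place $\mathfrak{t}_{\mathfrak{m}}$ inside an $F$-rational torus are all routine. Note that only hypothesis \eqref{G1} (in fact only the non-splitness of $G_{v_0}$) and hypothesis \eqref{G3} enter the argument; condition \eqref{G2} and $\R$-almost simplicity play no role.
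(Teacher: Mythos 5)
Your proof is correct in substance, but it takes a genuinely different route from the paper. The paper's proof of Lemma~\ref{link2thm} is essentially a citation: it verifies the hypotheses of \cite[Theorem 8.3]{Ma} (via connectedness of $H_{v_0}$ from \cite[p.~277]{Bo}) and invokes that theorem together with invariance of $H$-largeness under field extension. You instead reprove the required inequality from scratch via the restricted root space decomposition: rewrite $\|\mu\|^*$ and $\|\mu\|_H^*$ as $\tfrac14\sum|\text{eigenvalues of }\operatorname{ad}(d\mu(1))|$ on $\mathfrak{g}\otimes\C$ and $\mathfrak{k}\otimes\C$ respectively, choose $\mu$ with $d\mu(1)\in\mathfrak{m}\otimes\C$ so that the contribution of $\mathfrak a$ drops out, and use the $\mathfrak m$-equivariant isomorphisms $\mathfrak{g}_\lambda\simeq\mathfrak{k}_\lambda\simeq\mathfrak{p}_\lambda$ to match the $\mathfrak{p}$-eigenvalues with a subset of the $\mathfrak{k}$-eigenvalues. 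This is a clean, self-contained argument that makes transparent exactly where non-splitness enters (namely $\mathfrak m\neq 0$), and it also clarifies which of the theorem's hypotheses are actually used. It is very likely a reconstruction of the proof inside \cite[Theorem 8.3]{Ma}, so the two routes are not independent mathematically, but as written they are quite different.

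One step deserves tightening: the existence of a \emph{nonzero} $\mu\in X_*(T_H)$ with $d\mu(1)\in\mathfrak{t}_{\mathfrak m}\otimes\C$ is not a consequence of $\dim\mathfrak{t}_{\mathfrak m}\geqslant1$ alone (a generic positive-dimensional subspace of $\operatorname{Lie}(T_{H,v_0})\otimes\C$ meets the cocharacter lattice only in $0$). The point you need, and implicitly use by calling $\mathfrak{t}_{\mathfrak m}$ a ``subtorus,'' is that $\mathfrak{t}_{\mathfrak m}\oplus\mathfrak a$ is a $\theta$-stable Cartan of $\mathfrak g_{v_0}$, the corresponding maximal torus $T'$ of $G_{v_0}$ is $\theta$-stable, and $\mathfrak{t}_{\mathfrak m}$ is the Lie algebra of the algebraic subtorus $(T')^{\theta,0}\subset H_{v_0}$. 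Since this subtorus is nontrivial whenever $\mathfrak m\neq0$, its cocharacter lattice is a nonzero subgroup of $X_*(T_H)$ after conjugating it into $T_{H,v_0}$, which is what actually produces $\mu$. With that substituted for the dimension count, the argument is complete.
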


\begin{proof}
This follows from \cite[Theorem 8.3]{Ma}. Indeed, \cite[p. 277]{Bo} implies that the compact subgroup $H_{v_0}$ is connected in the real topology; by passing to the Lie algebra, this implies that $H_{v_0}$ is a maximal compact connected subgroup of $G_{v_0}$.  We then deduce from \cite[Theorem 8.3]{Ma} that $H_{v_0}$ is not small in $G_{v_0}$, and invariance of smallness under field extension then implies that $G$ is $H$-large.
\end{proof}

\subsection{Local fields}\label{sec:lf}

If $v$ is a place of $F$, we denote the completion by $F_v$.  If $v$ is finite, we denote the ring of integers, uniformiser, and cardinality of the residue field by $\cO_v$, $\varpi_v$, and $q_v$ respectively.  If $v \nmid D\infty$, we have the following consequences of our assumptions on $D$ and $\rho$ above.

\begin{enumerate}

\item
\label{local1} We have $G(\cO_v) = \rho^{-1}( \rho(G_v) \cap \SL_d(\cO_v))$ and $H(\cO_v) = \rho^{-1}( \rho(H_v) \cap \SL_d(\cO_v))$, so that $G(\cO_v) \cap H_v = H(\cO_v)$.

\item
\label{local2} $G(\cO_v)$ and $H(\cO_v)$ are hyperspecial maximal compact subgroups of $G_v$ and $H_v$ respectively.

\item
\label{local3} If $T$ (and hence $T_H$) split at $v$, the subgroups $G(\cO_v)$ and $H(\cO_v)$ correspond to points in the Bruhat--Tits buildings of $G_v$ and $H_v$ that lie in the apartments of $T$ and $T_H$ respectively.

\end{enumerate}
Property \eqref{local1} is a consequence of the inclusions $H \subset G \subset \SL_d$ extending to closed embeddings over $\cO_v$.  Property \eqref{local2} is \cite[3.8]{Ti} or \cite[5.1.40]{BT2}.  Property \eqref{local3} is \cite[5.1.33]{BT2}, once we use \cite[5.1.40]{BT2} to show that the $\cO_v$ group scheme $T/\cO_v$ is the standard one associated with $T_v$.  We let $\cP$ be the set of finite places of $F$ that do not divide $D$ and at which $T$ splits.  If $v \in \cP$, our assumptions imply that $G_v$ has a Cartan decomposition

\bes
G_v = \coprod_{\mu \in X_*^+(T)} G(\cO_v) \mu(\varpi_v) G(\cO_v)
\ees
with respect to $G(\cO_v)$ and $T$.

\subsection{Metrics}\label{sec:metrics}

For any place $v$ of $F$ and $g \in G(F_v)$ let $\| g\|_v$ denote the maximum of the $v$-adic norms of the matrix entries of $\rho(g)$. For $g \in G(\A_f)$, let $\| g \|_f=\prod_{v \nmid \infty}\|g_v\|_v$. Fix a left-invariant Riemannian metric on $G(F_{v_0})$.  Let $d( \cdot, \cdot)$ be the associated distance function. We define $d(x,y) = \infty$ when $x$ and $y$ are in different connected components of $G(F_{v_0})$ with the topology of a real manifold.

\subsection{Compact subgroups}\label{cmp-sg}
We choose compact subgroups $K_v$ of $G_v$ for all $v$ such that
\begin{enumerate}
\item[$\bullet$] $K_{v_0} = H_{v_0}$,
\item[$\bullet$] $K_v = G_v$ for all other real places,
\item[$\bullet$] $\rho(K_v) \subset \SL_d(\cO_v)$ for all finite $v$,
\item[$\bullet$] $K_v = G(\cO_v)$ for finite places $v \nmid D$, and 
\item[$\bullet$] $K = \prod_{v \nmid \infty} K_v$ is open in $G(\A_f)$. 
\end{enumerate}
By the remarks in the proof of Lemma \ref{link2thm}, $K_{v_0}$ is a maximal compact connected subgroup of $G_{v_0}$.  We shall suppose that $K_v$ for $v|D$ is sufficiently small to ensure that the finite group $Z(F)\cap K$ is reduced to $\{e\}$.

\subsection{Measure normalizations}\label{adelic-measures}

For any place $v$ of $F$, let $\mu_{G,v}^{\rm can}$ be the canonical measure on $G(F_v)$ as defined by Gross in \cite[Section 11]{Gr}; we recall this construction in Section \ref{sec:Grossmeasures}. Then for all finite places $v\nmid D$ one has $\mu_{G,v}^{\rm can}(K_v)=1$.  We may then form the product measure $\mu_G^{\rm can}=\prod_v \mu_{G,v}^{\rm can}$ on $G(\A)$. All convolutions (local and global) on $G$ will be defined with respect to these measures. If $f \in C^\infty_c(G(\A))$, we define the operator $\pi(f)$ on $L^2(G(F) \backslash G(\A) )$ by
\[
[\pi(f)\phi](x) = \int_{G(\A)} \phi(xg) f(g) d\mu_G^{\rm can}(g).
\]
If $f \in C^\infty_c(G(\A))$, we define $f^*$ by $f^*(g) = \overline{f}(g^{-1})$, so that $\pi(f)$ and $\pi(f^*)$ are adjoints.

The choice of canonical measure for $G$ is imposed by the use of the Arthur--Selberg trace formula in Section \ref{sec:atf}; indeed one wants a uniform way of normalizing measures on the collection of connected reductive groups appearing as centralizers. We can afford to be more casual with measure normalizations for $H$, in light of our treatment of the geometric side of the relative trace formula in Section \ref{sec:artf}. For finite places $v$ we choose Haar measures $dh_v$ on $H(F_v)$ so that $H(F_v)\cap K_v$ is assigned measure 1. Because $H_v$ is compact for archimedean $v$, at these places we choose Haar measures $dh_v$ so that $H_v$ has volume 1.  We set $dh = \prod_v dh_v$.

\subsection{Hecke algebras}\label{Hecke-alg}

If $S$ is any finite set of finite places prime to $D$, let $\cH_S$ be the convolution algebra of functions on $G(F_S)$ that are compactly supported and bi-invariant under $K_S$.  Similarly, $\cH_f$ is the convolution algebra of functions on $G(\A_f^{D})$ that are compactly supported and bi-invariant under $K^{D}$. We identify $\cH_S$ and $\cH_f$ with subalgebras of $C^\infty_c(G(\A_f))$ in the natural way. If $G_\infty^0$ denotes the connected component of the identity in $G_\infty$ in the real topology, we define $\cH_\infty$ to be the subspace of $C^\infty_c(G_\infty^0)$ consisting of functions that are bi-invariant under $K_\infty$, and define $\cH = \cH_\infty \otimes \cH_f$.

To control for the degree of Hecke operators in our later estimates, it will be convenient to work with the truncated Hecke algebras defined in \cite{ST}. Recall from Section \ref{sec:AG} the Weyl-invariant norm $\| \cdot \|^*$ on $X_*(T)$.  Let $v$ be a finite place not dividing $D$.  Because $G_v$ is unramified, there is a maximal $F_v$-split torus $A_v$ in $G_v$ such that $K_v$ corresponds to a point in the apartment of $A_v$.  We may conjugate $A_v$ inside $T$ over $\overline{F}_v$, and obtain a norm $\| \cdot \|_v$ on $X_*(A_v)$.  We then define
\[
\cH_v^{\leqslant\kappa}={\rm span}_\C\,\{{\bf 1}_{K_v \mu(\varpi_v) K_v}:\mu\in X_*(A_v),\, \|\mu\|_v \leqslant\kappa\};
\]
note that here and elsewhere, ${\bf 1}_A$ denotes the characteristic function of the set $A$.  If $S$ is any finite set of finite places not dividing $D$, we set $\cH_S^{\leqslant \kappa} = \otimes_{v \in S} \cH_v^{\leqslant \kappa}$.  

We note that $\cH_v^{\leqslant\kappa}$ is independent of the choices made in its definition.  First, the norm $\| \cdot \|_v$ is independent of the conjugation of $A_v$ inside $T$.  To see this, we note that $\| \lambda \|^*$ can also be defined by taking $\mathbb{G}_m \overset{\lambda}{\longrightarrow} T \overset{\Ad}{\longrightarrow} GL(\g)$, decomposing the representation of $\mathbb{G}_m$ on $\g$ into characters in $X^*( \mathbb{G}_m ) \simeq \Z$, and taking half the sum of the positive terms.  This lets us extend $\| \cdot \|^*$ to all of $X_*(G)$ in a conjugation invariant way, and $\| \cdot \|_v$ is the restriction of this to $X_*(A_v)$.   Secondly, if $A_v$ and $A_v'$ are two tori whose apartments contain the point corresponding to $K_v$, then by \cite[Section 2.2.1]{Ti} they are conjugate by an element of $K_v$, and this conjugation respects the norms on $X_*(A_v)$ and $X_*(A_v')$.  This implies that $\cH_v^{\leqslant\kappa}$ is independent of $A_v$.

\subsection{A lemma on double cosets}

We shall need bounds on the volume of the support of Hecke operators on $G_v$ and $H_v$ at places where these groups are split.  Such bounds are proved by Macdonald \cite{Mc} for split simply connected semisimple groups, but we shall need them in the general split reductive case, and derive them now from a result of Gross \cite{Gr1}.

In this subsection, we let $F$ be a $p$-adic field with integer ring $\cO$, uniformizer $\varpi$, and residue field $k$, and let $q = \# k$.  We let $G$ be a split connected reductive group over $F$. Let $T \subset G$ be a maximal split torus, and $K \subset G$ a hyperspecial maximal compact subgroup corresponding to a point in the apartment of $T$. Let $\| \cdot \|^*$ be the function on $X_*(T)$ as in Section \ref{sec:AG}.

\begin{lemma}\label{coset-vol}
We have $\# K \lambda(\varpi) K / K \sim q^{2\| \lambda \|^*}$ for all $\lambda \in X_*(T)$, where the implied constants depend only on $\dim G$.
\end{lemma}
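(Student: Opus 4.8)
The plan is to reduce the statement to the simply connected semisimple case, where Macdonald's formula \cite{Mc} applies directly, and then track how the index $\# K\lambda(\varpi)K/K$ changes under the relevant isogenies and central torus factors. First I would invoke the structure theory for split connected reductive groups: there is a $z$-extension, or more simply an isogeny $\widetilde{G} \times S \to G$, where $\widetilde{G}$ is semisimple simply connected and $S$ is a central split torus, whose kernel is a finite central group scheme of order bounded in terms of $\dim G$. Lansky and Pollack \cite{LP} carry out exactly this kind of analysis for the double coset volumes, and the point of citing them rather than reproving everything is that they handle the bookkeeping for the finite central subgroups that appear when passing between $G$ and $\widetilde{G}\times S$. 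I would state that, up to constants depending only on $\dim G$ (which controls the order of all the finite groups involved and the number of $K$-cosets that can collapse or split under the isogeny), the index $\# K\lambda(\varpi)K/K$ for $G$ agrees with the product of the corresponding index for $\widetilde{G}$ at the image cocharacter $\overline{\lambda}$ and a factor of $q^{\langle \lambda, \text{(sum of characters of } S)\rangle}$-type coming from the torus $S$.

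Next I would recall Macdonald's formula in the simply connected case: for $\widetilde{G}$ split simply connected semisimple and $\mu \in X_*^+(\widetilde{T})$ dominant, one has
\[
\# \widetilde{K}\mu(\varpi)\widetilde{K}/\widetilde{K} = q^{\langle 2\rho,\mu\rangle} \prod_{\alpha>0,\ \langle\alpha^\vee,\mu\rangle \neq 0} \frac{1 - q^{-\langle\alpha^\vee,\mu\rangle - \cdots}}{\cdots},
\]
the precise form of which I will not need; the only feature I need is that the product over roots is bounded above and below by constants depending only on the number of roots, hence only on $\dim \widetilde{G}$. Since $\langle 2\rho, \mu\rangle$ for the dominant representative $\mu$ equals $2\|\mu\|^* = 2\max_{w}\langle w\mu,\rho\rangle$ by definition of $\|\cdot\|^*$, this gives $\#\widetilde{K}\mu(\varpi)\widetilde{K}/\widetilde{K} \sim q^{2\|\mu\|^*}$ with implied constants depending only on $\dim\widetilde{G}$. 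For a non-dominant $\lambda$ one uses $W$-conjugacy of the double coset together with the fact that $\|\cdot\|^*$ is Weyl-invariant.

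Finally I would assemble the pieces: writing a general $\lambda\in X_*(T)$ in terms of its images in $X_*(\widetilde{T})$ and $X_*(S)$, one checks that the extension of $\|\cdot\|^*$ to all of $X_*(G)$ described in Section~\ref{Hecke-alg} (half the sum of the positive weights of $\Ad\circ\lambda$ on $\g$) is insensitive to the central torus part, so that $\|\lambda\|^* = \|\overline{\lambda}\|^*$ where $\overline{\lambda}$ is the image in $X_*(\widetilde{T})$. Combining with the isogeny comparison from the first step yields $\# K\lambda(\varpi)K/K \sim q^{2\|\lambda\|^*}$, uniformly in $\lambda$ with constants depending only on $\dim G$. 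The main obstacle is the first step: controlling the finite central subgroups uniformly, i.e. making sure that the discrepancy between double coset counts for $G$ and for $\widetilde{G}\times S$ is bounded by a constant depending only on $\dim G$ and not, say, on the size of the fundamental group or on $q$. This is precisely the technical content extracted from \cite{LP}, and I would lean on their results rather than reprove it; once that uniformity is in hand, everything else is Macdonald's formula plus the elementary observation that the root-system product is bounded in terms of $\dim G$.
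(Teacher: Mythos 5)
Your plan runs into a real obstruction at the first step, and I think it's worth spelling out why, because it's exactly the issue the paper's proof is designed to sidestep.

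You propose to pass from $G$ to $\widetilde{G}\times S$ with $\widetilde{G}$ simply connected semisimple, and then apply Macdonald's formula on $\widetilde{G}$. But the map $X_*(\widetilde{T})\oplus X_*(S)\to X_*(T)$ is not surjective when $G^{\mathrm{der}}$ is not simply connected: $X_*(\widetilde{T})$ is the coroot lattice, and $X_*(T)$ can be strictly larger (already for $G=\mathrm{PGL}_2$ the fundamental cocharacter of $T$ is not a coroot, only its double is). So a general $\lambda\in X_*(T)$ does not lift to a cocharacter of $\widetilde{T}\times S$ at all, and there is simply no double coset in $\widetilde{G}\times S$ to compare $K\lambda(\varpi)K$ with. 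The finiteness of the kernel of the isogeny is not the problem; the finiteness of the \emph{cokernel on cocharacter lattices} is, and that is not addressed by ``the discrepancy in double coset counts is bounded.'' You would need a separate argument for the cosets indexed by non-liftable $\lambda$, and Macdonald's formula on $\widetilde{G}$ gives you no handle on them. You also misread the role of \cite{LP}: their contribution, as used in the paper, is not to control the passage between $G$ and its simply connected cover, but to give an exact formula $\#K\lambda(\varpi)K/K=\sum_{\gamma\in Wt(\lambda)W}q^{l(\gamma)}\big/\sum_{w\in W}q^{l(w)}$ \emph{directly for a split semisimple group} via the length function on the \emph{extended} affine Weyl group $\widetilde{W}=X_*(T)\rtimes W$. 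The extended affine Weyl group is precisely the device that accommodates arbitrary $\lambda\in X_*(T)$ rather than just the coroot lattice, so there is no need to go to the simply connected cover at all.

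The paper's actual route is: (i) reduce to $\overline{G}=G/A_G$ semisimple by showing the projection $G\to\overline{G}$ induces a \emph{bijection} $K\lambda(\varpi)K/K\to K_0\overline{\lambda}(\varpi)K_0/K_0$ — this works because $X_*(T)\to X_*(\overline{T})$ is surjective with kernel $X_*(A_G)$, unlike the passage to the simply connected cover; then (ii) apply \cite[Cor.~2.13, Prop.~2.1]{LP} on the semisimple quotient, compute the maximal length $l(w_1t(\lambda)w_2)=2\|\lambda\|^*+|\Delta^+|$ achieved at $w_0t(\lambda)$, and conclude that the ratio of $q$-polynomials is $\sim q^{2\|\lambda\|^*}$ with constants depending only on the number of roots. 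If you want to salvage your plan, the cleanest fix is to give up on Macdonald-on-$\widetilde{G}$ and use the extended affine Weyl group formula directly; at that point you are essentially reproducing the paper's argument.
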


\begin{proof}
Let $P_\lambda$ be the parabolic subgroup associated with $\lambda$, so that
\[
\text{Lie}(P_\lambda) = \text{Lie}(T) + \bigoplus_{ \langle \alpha, \lambda \rangle \ge 0 } \text{Lie}(G)_\alpha.
\]
By Proposition 7.4 of \cite{Gr1}, we have $\# K \lambda(\varpi) K / K = q^{2 \| \lambda \|^* - \dim(G/P_\lambda)} \# (G/P_\lambda)(k)$. Now
\[
\# (G/P_\lambda)(k) = \# G(k) / \# P_\lambda(k)\quad\text{ and }\quad \# P_\lambda(k) = \# M_\lambda(k) q^{\dim U_\lambda},
\]
where $P_\lambda = M_\lambda U_\lambda$ is the standard Levi decomposition, and the first equality follows from \cite[Cor 16.5]{Bo}.  It follows that
\[
q^{- \dim(G/P_\lambda)} \# (G/P_\lambda)(k) = q^{\dim M_\lambda - \dim G} \# G(k) / \# M_\lambda(k).
\]
The formula of Steinberg \cite{St} for the number of points of a reductive group over a finite field implies that $\# G(k) / q^{\dim G} = f(q^{-1})$ and $\# M_\lambda(k) / q^{\dim M_\lambda} = f_\lambda(q^{-1})$, where $f$ and $f_\lambda$ are polynomials with constant term 1 that depend only on the root data of $G$ and $M_\lambda$ respectively.  This completes the proof.
\end{proof}

\subsection{Lie algebras}

Let $\g$ be the real Lie algebra of $G(F_\infty)$, and let $\g = \gk + \p$ be the Cartan decomposition associated with $K_\infty$. Let $\ga \subset \p$ be a maximal abelian subalgebra.  We let $\Delta_\R$ be the roots of $\ga$ in $\g$, and let $\Delta_\R^+$ be a choice of positive roots. We let $W_\R$ be the Weyl group of $\Delta_\R$. For $\alpha \in \Delta_\R$, we let $m(\alpha)$ denote the dimension of the corresponding root space. We denote the Killing form on $\g$ and $\g^*$ by $\langle \cdot, \cdot \rangle$, and let $\| \cdot \|$ be the associated norm on $\ga$ and $\ga^*$.  

Let $c(\xi)$ be Harish-Chandra's $c$-function; see \cite[Chapter II, $\mathsection$3.3]{He2}. Then it is known \cite[(3.44)]{DKV} that the upper bound
\begin{equation}\label{c-beta}
|c(\xi)|^{-2}\ll \beta(\xi)
\end{equation}
holds for all $\xi\in\ga^*$, where we have put
\[
\beta(\xi) = \prod_{\alpha \in \Delta^+_\R} ( 1 + | \langle \alpha, \xi \rangle| )^{m(\alpha)}.
\]
\subsection{Proof of Proposition \ref{admissibleexist}}\label{sec:Borel} We now recall Borel's theorem on the existence of rationally defined Cartan involutions on real semisimple Lie algebras. This is the central ingredient in the proof of Proposition \ref{admissibleexist}.

\begin{prop}[(Borel)]
\label{Flieexist}

Let $F$ be a totally real number field, and let $v_0$ be a real place of $F$.  Let $\g'$ be a real semisimple Lie algebra. There exists $\g / F$ with an involution $\theta$ defined over $F$ such that $\g_{v_0} \simeq \g'$, $\g_v$ is of compact type for all real $v \neq v_0$, and $\theta$ induces a Cartan involution of $\g_{v_0}$.

\end{prop}

\begin{proof}
The statement can be extracted from the proof of \cite[Proposition 3.8]{Borel63}. For completeness we provide details.

By \cite[Proposition 3.7]{Borel63}, there exists $\g_0/\Q$ and a $\Q$-involution $\theta$ of $\g_0$ such that $\g_0 \otimes \R \simeq \g'$, and $\theta$ induces a Cartan involution of $\g_0 \otimes \R$. Let $\g_0 = \gk_0 + \p_0$ be the rational Cartan decomposition associated with $\theta$. Choose $\alpha \in F$ such that $F=\Q(\alpha)$ and $\alpha_{v_0} > 0$, and $\alpha_v < 0$ for all other real $v$. Define $\g = \gk_0 \otimes F + \sqrt{\alpha}\, \p_0 \otimes F \subset \g_0 \otimes \overline{\Q}$. We extend $\theta$ to an involution of $\g$ defined over $F$.

We have $\g_{v_0} \simeq \g_0 \otimes \R \simeq \g'$, and it is clear that $\theta$ induces a Cartan involution of $\g_{v_0}$.  If $v \neq v_0$ is real, we have $\g_v = \gk_0 \otimes \R + i \p_0 \otimes \R \subset \g_0 \otimes \C$ so that $\g_v$ is of compact type.
\end{proof}

The corresponding statement for simply connected semisimple groups  -- Proposition \ref{admissibleexist} -- can be deduced from the Lie algebra version, as follows.

Let $G'$ be as in Proposition \ref{admissibleexist} and let $\g' / \R$ be its Lie algebra. Let $F$ be a totally real number field and $v_0$ a real place of $F$. Applying Proposition \ref{Flieexist} to $\g'$ with this choice of $F$ and $v_0$ we obtain a corresponding $\g / F$ and involution $\theta$.  Let $\text{Int}(\g)$ be the identity component of $\text{Aut}(\g)$, and let $G$ be the simply connected cover of $\text{Int}(\g)$.  Then $G$ has Lie algebra $\g$, and $\theta$ induces an involution of $G$ which we also denote by $\theta$ (as the derivative at the identity of the former is equal to the latter).  Because $G'$ and $G_{v_0}$ are connected, simply connected semisimple groups with isomorphic Lie algebras, they are isomorphic.  If $v \neq v_0$ is real, then $\g_v$ is of compact type and $G(F_v)$ is compact.

By Proposition \ref{Flieexist}, $\theta$ induces a Cartan involution of $\g_{v_0}$.  Because $G$ is simply connected, $G_{v_0}$ is connected in the real topology by the Cartan Connectedness Theorem \cite[Chapter 4, $\mathsection$2.2, Theorem 2.2]{OV}, and it follows that $\theta$ also induces a Cartan involution of $G_{v_0}$ (in the sense that the fixed point set $G_{v_0}^\theta$ is a maximal compact subgroup of $G_{v_0}$).  This completes the proof of Proposition \ref{admissibleexist}.

\subsection{Congruence subgroups and adelic quotients}\label{adelic-quotients}

Recall from Section \ref{sec:AG} the integer $D$, and from Section \ref{cmp-sg} the compact open subgroups $K_v$ for finite places $v$ and their product $K = \prod_{v \nmid \infty} K_v$ inside $G(\A_f)$. We now define congruence subgroups of $K_v$ and $K$ to be used throughout the paper.

Fix an integer $N\geqslant 1$ prime to $D$. For $v \nmid D \infty$ we write $K_v(N)$ for the level $N$ principal congruence subgroup, given by $K_v(N) = \rho^{-1}( \rho(G_v) \cap \SL_d(\cO_v)(N) )$. Here $\SL_d(\cO_v)(N)$ denotes the principal congruence subgroup of $\SL_d(\cO_v)$. Let $S$ be a finite set of finite places prime to $D$. Then we put
\[
K(N)=\prod_{v \nmid D \infty} K_v(N)\prod_{v \mid D}K_v \qquad\text{ and } \qquad K^S(N)=\prod_{\substack{v \nmid D \infty \\ v \notin S}} K_v(N) \prod_{v \mid D}K_v.
\]
For every finite $v$ we define $K_{H,v} = H_v \cap K_v$.  Letting
\[
K_H=\prod_{v \nmid \infty} K_{H,v} \qquad \text{ and } \qquad K_H^S=\prod_{v \notin S\cup\infty} K_{H,v},
\]
then the congruence manifold of interest to us is
\[
Y_N = G(F) \backslash G(\A) / K(N)K_H K_\infty.
\]
Let $\mathrm{Vol}_N$ be the volume assigned by $\prod_{v\nmid\infty} \mu_{G,v}^{\rm can}$ to the compact open subgroup $K(N)K_H$.

Finally, we write
\[
[G]=G(F) \backslash G(\A) \qquad\text{and}\qquad [H]=H(F) \backslash H(\A) 
\]
for the automorphic spaces associated with $G$ and $H$. We shall identify $L^2(Y_N)$ with the functions in $L^2([G])$ fixed under $K(N)K_HK_\infty$.

\subsection{Hecke--Maass forms}\label{Maass}

Let $\mathcal{D}$ be the algebra of differential operators on $G_\infty^0 / K_\infty$ that are invariant under the left action of $G_\infty^0$.  Note that if we define $K_{v_0}^+$ to be the group of fixed points of $\theta$ on $G_{v_0}$, and let $K_\infty^+$ be the group obtained by replacing $K_{v_0}$ with $K_{v_0}^+$ in the definition of $K_\infty$, then $K_\infty^+$ is a maximal compact subgroup of $G_\infty$, we have $G_\infty / K_\infty^+ \simeq G_\infty^0 / K_\infty$, and the elements of $\mathcal{D}$ are also invariant under the larger group $G_\infty$.  It follows that $\cD$ descends to an algebra of operators on $Y_N$ in a natural way.

We define a Hecke--Maass form to be a function on $Y_N$ that is an eigenfunction of the ring $\mathcal{D}$ on $Y_N$ and the Hecke algebra $\cH_f$ (and hence of $\cH$). If $\psi$ is a Hecke--Maass form and $\omega \in \cH$, then, using the notation in Section \ref{adelic-measures}, we define $\widehat{\omega}(\psi)$ by the equation $\pi(\omega) \psi = \widehat{\omega}(\psi) \psi$. We define the spectral parameter of $\psi$ to be the unique $\xi \in \ga^*_\C / W_\R$ such that $\psi$ has the same eigenvalues under the action of $\mathcal{D}$ as the associated spherical function $\varphi_\xi$. The Laplace eigenvalue of $\psi$ is given by $(\Delta + C_1(G) + \langle \xi, \xi \rangle) \psi = 0$ for some $C_1(G) \in \R$.

\section{A comparison of trace formulae}

In the next two sections, we state without proof a trace formula and relative trace formula, and deduce Theorems \ref{thm1} and \ref{thm2} from them. The trace formulae will be proved in Sections \ref{sec:artf} and \ref{sec:atf}.

\subsection{The trace formula}
\label{subsec:tf}

Let $N$ be a positive integer prime to $D$. Let $D'$ be a sufficiently large multiple of $D$, depending only on $G$, to be chosen in Section \ref{adelicOI}.  Let $T$ be a finite set of finite places prime to $D'$ and $N$. Let $\xi\in\mathfrak{a}^*$.  We will use the trace formula to count Hecke--Maass forms on $Y_N$ with spectral parameter near $\xi$, weighted by a Hecke operator at $T$.  To do this, we shall consider test functions of the form $\phi={\bf 1}^T_N\otimes k_T\otimes k_\xi$, for $k_T\in \mathcal{H}_T$ and $k_\xi\in \mathcal{H}_\infty$, where we have put
\be\label{def-eps}
{\bf 1}_N^T={\bf 1}_{K^T(N)K^T_H}.
\ee
We introduce the following condition on $k_\xi$:

\begin{enumerate}
\item[$({\mathsf P}_{\!\!\infty})$ :] $k_\xi$ is supported in $\{g\in G(F_\infty): d(g,H_{v_0}) <1\}$, and satisfies
\[
k_\xi(g)\ll \beta(\xi)(1 + \| \xi \| d(g,H_{v_0}) )^{-1/2}.
\]
\end{enumerate}
If $U$ is any finite set of finite places, we define $q_U = \prod_{v \in U} q_v$.  Our asymptotic trace formula is the following.

\begin{theorem}\label{trace-guy}
There are constants $A,B,\delta,\eta>0$ such that the following holds. Let $N$, $T$, and $\xi$ be as above, and let $\kappa\geqslant 0$. For any $k_T \in \cH_T^{\leqslant  \kappa}$ and any $k_\xi\in\mathcal{H}_\infty$ satisfying $(\mathsf{P}_\infty)$ we have

\bes
\mathrm{Vol}_N\sum_{i\geqslant 0} \widehat{k_T\otimes k_\xi}(\psi_i) =\mu^{\textnormal{can}}_G([G]) k_T(1)k_\xi(1) + O(N^{- \delta} q_T^{A\kappa + B}  \beta(\xi) (1 + \|\xi\|)^{-\eta}\| k_T \|_\infty),
\ees
where $\{ \psi_i \}$ runs over an orthonormal basis of Hecke--Maass forms for $L^2(Y_N)$. The implied constant depends only on $G$, $\theta$, and $K$.
\end{theorem}

We note that the exponents $A$, $B$, and $\delta$ are ineffective, and this is the only source of ineffectiveness of the exponent in Theorems \ref{thm1} and \ref{thm2}.  As discussed in Section \ref{sec:statement}, this comes from using theorems of Finis--Lapid and Cluckers--Gordon--Halupczok to bound orbital integrals.

\subsection{The relative trace formula}
\label{subsec:rtf}

For $\psi\in C^\infty (G(F) \backslash G(\A))$ we consider the $H$-automorphic period
\[
\mathcal{P}_H(\psi)=\int_{H(F) \backslash H(\A)} \psi(h) dh.
\]
Our relative trace formula will give the average size of these periods over an orthonormal basis $\{\psi_i\}$ of Hecke--Maass forms for $L^2(Y_N)$.

Let $N$ be a positive integer prime to $D$, $S$ a finite set of finite places prime to $D$ and $N$, and $\xi\in\mathfrak{a}^*$.  We now consider test functions of the form $\phi={\bf 1}^S_N\otimes k_S\otimes k_\xi$, for $k_S\in \mathcal{H}_S$ and $k_\xi\in \mathcal{H}_\infty$. As well as condition $({\mathsf P}_\infty)$, we shall use the following condition on $k_S$:
\begin{enumerate}
\item[$({\mathsf P}_S)$ :] there is $R>1$ such that $\|g\|_S\leqslant R$  for all $g\in {\rm supp}(k_S)$.
\end{enumerate}
The main term of our formula is expressed in terms of the averaging map
\[
\Pi_H: L^1(G(F_S))\longrightarrow L^1(G(F_S)/H(F_S))
\]
given by
\[
\Pi_H k_S(g)=\int_{H(F_S)}k_S(gh)dh.
\]
With this notation in hand, we may state our asymptotic relative trace formula.

\begin{prop}
\label{reltrace}
There is $A>0$ such that the following holds. Let $N$, $S$, and $\xi$ be as above. Let $k_S\in \mathcal{H}_S$ satisfy $({\mathsf P}_S)$ and $k_\xi\in \mathcal{H}_\infty$ satisfy  $({\mathsf P}_{\!\!\infty})$. Then
\begin{align*}
\mathrm{Vol}_N\sum_{i\geqslant 0}&\widehat{k_S\otimes k_\xi}(\psi_i) | \mathcal{P}_H(\psi_i)|^2\\
&= \mathrm{vol}([H])\Pi_H k_S(1) k_\xi(1)+O\left(\# ({\rm supp}\, k_S/K_S) \beta(\xi)(1 + \| \xi \| )^{-1/4} N^{-1/4} R^A \|k_S\|_{\infty}\right),
\end{align*}
where $\{\psi_i\}$ runs over an orthonormal basis of Hecke--Maass forms for $L^2(Y_N)$.

\end{prop}

\subsection{Proof of Theorems \ref{thm1} and \ref{thm2}}
\label{subsec:mainproof}

Recall from Section \ref{sec:lf} that $\cP$ denotes the set of finite places of $F$ that do not divide $D$ and at which $T$ splits. For $v \in \cP$ and $\mu \in X_*(T)$, we define $\tau(v,\mu) \in \cH_v$ to be the function supported on $K_v \mu( \varpi_v) K_v$ and taking the value $q_v^{-\| \mu \|^*}$ there.  Let $P$ be a positive integer and put
\be\label{def:S}
S=\{v \in \cP: P/2 \leqslant  q_v <  P, \; v \nmid N,\; v\nmid D' \}.
\ee
Choose any non-zero $\nu \in X_*(T)$, and let
\bes%\label{def:omega}
\omega_S = \sum_{v\in S} \tau(v,\nu),
\ees
where, as in Section \ref{Hecke-alg}, we are identifying $\mathcal{H}_v$ with a subalgebra of $\mathcal{H}_S$ in the natural way.

In Section \ref{sec:avgperiod}, we will deduce the following weighted bounds from our two trace formulae.

\begin{prop}
\label{avgperiod}

For any $\nu$ and $P$, we have
\be
\label{avgupper}
\mathrm{Vol}_N\sum_{\|\Re \xi_i-\xi\|\leqslant 1} |\widehat{\omega_S}(\psi_i)|^2 \ll P \beta(\xi)(1+ P^A N^{-\delta}(1+\|\xi\|)^{-\delta}).
\ee
In addition, if $\nu \in X_*(T_H)$ satisfies the $H$-large inequality \eqref{K-large-cond}, then there is $Q > 0$ depending on $\nu$ such that
\be
\label{avglower}
\mathrm{Vol}_N\sum_{\|\Re \xi_i-\xi\|\leqslant Q} |\widehat{\omega_S}(\psi_i)|^2 | \mathcal{P}_H(\psi_i)|^2 \gg_\epsilon P^{2 - \epsilon} \beta(\xi)(1 + O( P^A N^{-\delta} (1 + \| \xi \|)^{-\delta}) ).
\ee
The exponent $A$, and the implied constants, depend on the choice of $\nu$.

\end{prop}

Theorems \ref{thm1} and \ref{thm2} follow easily from Proposition \ref{avgperiod}, as we now explain.  Choose $0 \neq \nu \in X_*(T_H)$ satisfying \eqref{K-large-cond}, and choose $P$ to be a small power of $(1+\|\xi\|) N$.  We then find $\delta>0$ such that for $(1 + \| \xi \|)N$ sufficiently large,
\be\label{upper}
\mathrm{Vol}_N\sum_{\|\Re \xi_i-\xi\|\leqslant Q} |\widehat{\omega_S}(\psi_i)|^2 \ll \beta(\xi)(1+\|\xi\|)^\delta N^\delta
\ee
and
\be\label{lower}
\mathrm{Vol}_N\sum_{\|\Re \xi_i-\xi\|\leqslant Q} |\widehat{\omega_S}(\psi_i)|^2 | \mathcal{P}_H(\psi_i)|^2 \gg_\epsilon \beta(\xi)(1+\|\xi\|)^{2\delta-\epsilon}N^{2\delta-\epsilon}.
\ee
Comparing \eqref{upper} and \eqref{lower} we find that there is $\delta>0$ and $Q>1$ such that for every $\xi\in\ga^*$ and every $N$ with $(1 + \| \xi \|)N$ large there is a Hecke--Maass form $\psi_i$ on $Y_N$ with spectral parameter $\|\xi_i-\xi\|\leqslant Q$ and satisfying $|\mathcal{P}_H(\psi_i)|\gg (1+\|\xi_i\|)^\delta N^\delta$. This implies the same lower bound on $\|\psi_i\|_\infty$, and in particular proves Theorem \ref{thm1}.

It remains to refine this power growth to obtain the stated lower bounds of Theorem \ref{thm2}. For this, we will make a special choice of orthonormal basis of Hecke--Maass forms for $L^2(Y_N)$. First recall that we have a Hilbert direct sum decomposition
\[
L^2(Y_N)=\bigoplus_\pi m(\pi)\pi^{K(N)K_HK_\infty},
\]
the sum ranging over irreducible representations of $G_\infty^0 \times G(\A_f)$ in $L^2(G(F)\backslash G(\A))$ satisfying $\pi^{K(N)K_HK_\infty} \neq 0$, each occuring with multiplicity $m(\pi)$. Each $\pi$ gives rise to a spectral datum $c_\pi$ (as defined in the paragraph preceding Theorem \ref{thm2}), and this assignment $\pi\mapsto c_\pi$ is finite to one. Given a spectral datum $c$ for $(G,N)$, the space $V(N,c)$ is the direct sum of $m(\pi)\pi^{K(N)K_HK_\infty}$ over all $\pi$ in the fiber over $c$.

Let $\{ c_i \}$ be an enumeration of the spectral data, and for each $c_i$ write $V_i = V(N,c_i)$ and $\xi_i = \xi(c_i)$.  The automorphic period $\mathcal{P}_H$ defines a linear functional on each finite dimensional vector space $V_i$, and its kernel is of codimension at most 1.  If the kernel is codimension 1 we let $\phi_i \in V_i$ be a unit normal to it, and otherwise choose $\phi_i \in V_i$ to be an arbitrary unit vector.  We may complete this set of vectors to an orthonormal basis of Hecke--Maass forms for $L^2(Y_N)$, to which we apply \eqref{upper} and \eqref{lower}.  Since $\mathcal{H}$ acts as a scalar on $V_i$ we may write the left-hand side of \eqref{upper} as
\[
\mathrm{Vol}_N\sum_{ \substack{c_i \\ \|\Re \xi_i-\xi\|\leqslant Q}} |\widehat{\omega_S}(\phi_i)|^2 \dim V_i,
\]
and the left-hand side of \eqref{lower} as
\[
\mathrm{Vol}_N\sum_{\substack{c_i\\ \|\Re \xi_i-\xi\|\leqslant Q}} |\widehat{\omega_S}(\phi_i)|^2 | \mathcal{P}_H(\phi_i)|^2.
\]
We obtain Theorem \ref{thm2} by comparing the right-hand sides of \eqref{upper} and \eqref{lower} as before.
\qed

\section{Bounds for average periods}
\label{sec:avgperiod}

We now deduce the average period bounds of Proposition \ref{avgperiod} from Theorem \ref{trace-guy} and Proposition \ref{reltrace}. For this, we must choose test functions $k_\xi\in\mathcal{H}_\infty$ and $k_S\in \mathcal{H}_S$ to insert into Theorem \ref{trace-guy} and Proposition \ref{reltrace} and explicate the error terms in those results.

\subsection{The archimedean test function}\label{k-inf}

Let $G_\infty^0$ denote the connected component of the identity in $G_\infty$ in the real topology.  If $\mu \in \ga^*_\C$, we define $\varphi_\mu$ to be the corresponding spherical function on $G_\infty^0$. If $k_\infty \in C^\infty_c(G_\infty^0)$, we define its Harish-Chandra transform by
\bes
\widehat{k}_\infty(\mu) = \int_{G_\infty^0} k_\infty(g) \varphi_{-\mu}(g)d\mu_{G,\infty}^{\rm can}(g).
\ees
Note that this is related to our earlier notation as follows: if $\psi$ is a Maass form with spectral parameter $\mu$ and $k_\infty$ is bi-$K_\infty$-invariant, then $\widehat{k}_\infty(\psi) = \widehat{k}_\infty(-\mu)$.

We shall choose $k_\xi$ so that its Harish-Chandra transform concentrates around $-\xi\in\ga^*$. For this we first take a function $h_0 \in C^\infty( \ga^* )$ of Paley--Wiener type that is real, nonnegative, and satisfies $h_0(0) = 1$.  We implicitly extend $h_0$ to a function on $\ga^*_\C$. Let
\[
h_\xi^0(\nu) = \sum_{w \in W_\R} h_0( w\nu - \xi),
\]
and let $k_\xi^0$ be the bi-$K_\infty$-invariant function on $G_\infty^0$ satisfying $\widehat{k}_\xi^0(-\mu) =h_\xi^0(\mu)$. We define $k_\xi = k_\xi^0 * k_\xi^0$ and $h_\xi = ( h_\xi^0 )^2$. We have $\widehat{k}_\xi(-\mu) =h_\xi(\mu)$, so that if $\psi$ has spectral parameter $\mu$ as before then $\widehat{k}_\xi(\psi) = h_\xi(\mu)$.  We define a unitary spectral parameter to be a spectral parameter of a spherical unitary representation of $G_\infty^0$.  We have $\| \Im \mu \| \leqslant \| \rho \|$ for any unitary spectral parameter $\mu$ \cite[Ch. IV, Thm 8.1]{HGGA}. If $\mu \in \ga^*_\C$ is unitary, Lemma \ref{realparam} implies that $h_\xi(\mu) = h_\xi^0(\mu) h_\xi^0(\overline{\mu}) = |h_\xi^0(\mu)|^2 \geqslant 0$.

\begin{lemma}
\label{realparam}

If $\mu \in \ga^*_\C$ is a unitary spectral parameter, then $W_\R \mu = W_\R \overline{\mu}$.

\end{lemma}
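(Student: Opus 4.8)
The strategy is to reduce the assertion to positivity of the spherical function attached to a unitary parameter, combined with two elementary symmetries of Harish-Chandra's spherical functions.

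By definition, $\mu$ being a unitary spectral parameter means there is a spherical unitary representation $(\pi,\mathcal{H})$ of $G_\infty^0$ with spectral parameter $\mu$; fix a unit vector $v_0$ in its one-dimensional subspace $\mathcal{H}^{K_\infty}$ of $K_\infty$-fixed vectors. Then $\varphi_\mu(g)=\langle\pi(g)v_0,v_0\rangle$ is bi-$K_\infty$-invariant, equals $1$ at the identity, and is an eigenfunction of every element of $\mathcal{D}$ with eigenvalues given by $\mu$, so it is precisely the spherical function of that name. Since the inner product on $\mathcal{H}$ is $G_\infty^0$-invariant, $\varphi_\mu$ is a positive-definite function: for any $g_1,\dots,g_n\in G_\infty^0$ and $c_1,\dots,c_n\in\C$,
\[
\sum_{i,j}c_i\overline{c_j}\,\varphi_\mu(g_j^{-1}g_i)=\Big\|\sum_i c_i\,\pi(g_i)v_0\Big\|^2\geqslant 0 .
\]
In particular $\overline{\varphi_\mu(g)}=\varphi_\mu(g^{-1})$ for every $g$.

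The next step is to invoke the two standard identities, valid for arbitrary $\nu\in\ga_\C^*$:
\[
\varphi_\nu(g^{-1})=\varphi_{-\nu}(g)\qquad\text{and}\qquad\overline{\varphi_\nu(g)}=\varphi_{-\overline\nu}(g).
\]
The first is the classical behaviour of spherical functions under $g\mapsto g^{-1}$: writing $g=k_1\exp(H)k_2$ in the Cartan decomposition, one has $g^{-1}=k_2^{-1}\theta_\infty(\exp H)k_1^{-1}$, where $\theta_\infty$ is the Cartan involution attached to $K_\infty$ (e.g. transpose-inverse in a matrix realisation), which fixes $K_\infty$ and acts by $-1$ on $\ga$; since $\varphi_\nu\circ\theta_\infty$ is again a bi-$K_\infty$-invariant $\mathcal{D}$-eigenfunction with value $1$ at $e$ and parameter $-\nu$, it equals $\varphi_{-\nu}$, and the identity follows. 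The second is immediate from Harish-Chandra's integral representation of $\varphi_\nu$, in which $\nu$ enters the integrand only through a factor of the form $e^{i\langle\nu,\,\cdot\,\rangle}$, so that complex conjugation replaces $\nu$ by $-\overline\nu$. Applying both identities with $\nu=\mu$ and combining with the positivity relation above gives, for all $g\in G_\infty^0$,
\[
\varphi_{-\mu}(g)=\varphi_\mu(g^{-1})=\overline{\varphi_\mu(g)}=\varphi_{-\overline\mu}(g).
\]
Since distinct classes in $\ga_\C^*/W_\R$ give rise to distinct spherical functions, this forces $W_\R(-\mu)=W_\R(-\overline\mu)$, that is $W_\R\mu=W_\R\overline\mu$.

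I expect the only delicate point to be the bookkeeping of signs in the two displayed identities relative to the precise normalisation of the Harish-Chandra isomorphism fixed earlier (the one for which the tempered spectrum is $\ga^*/W_\R$), and in particular whether the spherical function of $\pi$ should be written $\varphi_\mu$ or $\varphi_{-\mu}$. This is not a genuine obstacle, however: the conclusion $W_\R\mu=W_\R\overline\mu$ is invariant under $\mu\mapsto-\mu$, so no such sign ambiguity affects the argument.
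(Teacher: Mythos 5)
Your proof is correct and takes essentially the same approach as the paper: both combine the symmetry $\varphi_\mu(g^{-1}) = \overline{\varphi_\mu(g)}$ coming from unitarity (positive definiteness of the matrix coefficient), the conjugation identity $\overline{\varphi_\nu} = \varphi_{-\overline\nu}$ from Harish-Chandra's integral formula, and the inversion identity $\varphi_\nu(g^{-1}) = \varphi_{-\nu}(g)$, then conclude from injectivity of $\nu \mapsto \varphi_\nu$ on $\ga_\C^*/W_\R$. You arrive at $\varphi_{-\mu} = \varphi_{-\overline\mu}$ whereas the paper arrives at $\varphi_\mu = \varphi_{\overline\mu}$, a cosmetic difference; your closing remark about sign invariance correctly dismisses that discrepancy.
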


\begin{proof}

It suffices to show that $\varphi_\mu = \varphi_{\overline{\mu}}$.  Realizing $\varphi_\mu$ as a matrix coefficient of a unitary representation gives $\varphi_\mu(g) = \overline{\varphi}_\mu(g^{-1})$ for all $g$. The usual formula for $\varphi_\mu$ as a $K_\infty$-integral gives $\overline{\varphi}_\mu(g^{-1}) = \varphi_{-\overline{\mu}}(g^{-1})$. The identity $\varphi_{-\overline{\mu}}(g^{-1}) = \varphi_{\overline{\mu}}(g)$, obtained by e.g. setting $h = 1$ in Lemma 4.4 of \cite[Ch. IV]{HGGA} and comparing with Theorem 4.3 there, completes the proof.
\end{proof}

\begin{lemma}
\label{BPlemma}
The function $k_\xi$ satisfies property $({\mathsf P}_{\!\!\infty})$.\end{lemma}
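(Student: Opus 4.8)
Property $({\mathsf P}_{\!\!\infty})$ requires two things of $k_\xi$: that it be supported in $\{g\in G(F_\infty):d(g,H_{v_0})<1\}$, and that it satisfy $k_\xi(g)\ll\beta(\xi)(1+\|\xi\|\,d(g,H_{v_0}))^{-1/2}$. Both assertions are instances of \cite[Theorem 2]{BP} and \cite[Proposition 7.2]{MT}, applied on the Riemannian symmetric space $G_\infty^0/K_\infty\simeq G_{v_0}/K_{v_0}^+$. Here we use the hypothesis that $G_{v_0}$ is $\R$-almost simple: it guarantees that this symmetric space is irreducible of noncompact type, which is the setting of those references. We outline the argument.

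For the support, recall that $h_0$ is of Paley--Wiener type, so for a suitable choice of $h_0$ it is entire on $\ga^*_\C$ with $|h_0(\lambda)|\ll_M(1+\|\lambda\|)^{-M}e^{\epsilon\|\operatorname{Im}\lambda\|}$ for every $M\geqslant0$, where $\epsilon>0$ can be taken as small as we like. Passing from $h_0$ to $h_\xi^0(\nu)=\sum_{w\in W_\R}h_0(w\nu-\xi)$ and then to $h_\xi=(h_\xi^0)^2$ leaves both the rapid decay on $\ga^*$ and the exponential type (now $2\epsilon$) in place, since translation by the real vector $\xi$ and $W_\R$-symmetrization do not change $\|\operatorname{Im}\nu\|$. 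By the Paley--Wiener theorem for the spherical transform on $G_\infty^0$ \cite[Ch.~IV]{HGGA}, $k_\xi$ (whose Harish--Chandra transform is $h_\xi(-\,\cdot)$) is supported where the distance in $G_\infty^0/K_\infty$ to the base point is at most $2\epsilon$; since this distance is comparable to $d(\cdot,H_{v_0})$, $H_{v_0}$ being compact and equal to the factor of $K_\infty$ at $v_0$, choosing $\epsilon$ small enough places $\mathrm{supp}(k_\xi)$ inside $\{g:d(g,H_{v_0})<1\}$. We also record that $|h_\xi(\mu)|\ll_M(1+\delta(\mu))^{-M}$ on $\ga^*$ for every $M$, where $\delta(\mu)=\min_{w\in W_\R}\|w\mu-\xi\|$.

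For the pointwise bound, apply spherical Fourier inversion,
\[
k_\xi(g)=c_G\int_{\ga^*}\widehat{k_\xi}(\mu)\,\varphi_\mu(g)\,|\mathbf{c}(\mu)|^{-2}\,d\mu,
\]
where $\mathbf{c}$ is the Harish--Chandra $c$-function of $G_\infty^0/K_\infty$. Three inputs enter. First, the Gindikin--Karpelevi\v{c} formula gives $|\mathbf{c}(\mu)|^{-2}\ll\beta(\mu)$ for $\mu\in\ga^*$, and since $\beta$ and the root multiplicities $m(\cdot)$ are $W_\R$-invariant one checks directly that $\beta(\mu)\ll\beta(\xi)(1+\delta(\mu))^{\dim\gn}$. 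Second, the rapid decay of $h_\xi$ recorded above. Third, the basic oscillatory estimate $|\varphi_\mu(\exp H)|\ll(1+\|\mu\|\,\|H\|)^{-1/2}$, valid uniformly for $\mu\in\ga^*$ and $H\in\ga$ on an irreducible symmetric space of noncompact type: irreducibility ensures that for any nonzero $\mu,H$ there is a root $\alpha$ with $|\langle\alpha,\mu\rangle|\gg\|\mu\|$ and $|\langle\alpha,H\rangle|\gg\|H\|$ — one may take $\alpha$ a Weyl conjugate of the highest root, all of whose marks are positive — and feeding this into Harish--Chandra's stationary-phase bound for $\varphi_\mu$ yields the claim. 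Inserting the three inputs and decomposing $\ga^*$ according to $\delta(\mu)$ — a bounded core on which $\|\mu\|\asymp\|\xi\|$ and $|\mathbf{c}(\mu)|^{-2}\ll\beta(\xi)$, together with dyadic shells $2^k\leqslant\delta(\mu)\leqslant2^{k+1}$ on which the rapid decay of $h_\xi$ overwhelms the polynomial growth of $\beta(\mu)$ and of the shell volume — produces $k_\xi(g)\ll\beta(\xi)(1+\|\xi\|\,d(g,H_{v_0}))^{-1/2}$, using $d(g,H_{v_0})<1$ on the support to absorb the regime where $\|\xi\|$ is bounded.

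The one substantive point is the uniform spherical function bound $|\varphi_\mu(\exp H)|\ll(1+\|\mu\|\,\|H\|)^{-1/2}$ with its dimension-free exponent $1/2$: it fails for reducible symmetric spaces (for instance $\mathrm{SL}_2\times\mathrm{SL}_2$ with $\mu$ and $H$ supported on complementary factors), so it is precisely this step that the $\R$-almost simple hypothesis is there to make available. Granting it together with the $c$-function asymptotics — which constitute the content of \cite[Theorem 2]{BP} and \cite[Proposition 7.2]{MT} — the remainder is the dyadic bookkeeping sketched above.
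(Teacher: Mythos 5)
Your proposal is correct and follows the same route as the paper: spherical Paley--Wiener (via \cite{Ga}) for the support, then spherical inversion together with the uniform bound on $\varphi_\mu$ from \cite[Theorem 2]{BP} / \cite[Proposition 7.2]{MT} and the concentration of $\widehat{k}_\xi$ near $-\xi$ for the pointwise decay. One small caveat: the heuristic that a single Weyl-conjugate of the highest root simultaneously gives $|\langle\alpha,\mu\rangle|\gg\|\mu\|$ and $|\langle\alpha,H\rangle|\gg\|H\|$ is not how those references actually prove the spherical-function bound (they carry out a more delicate oscillatory-integral analysis, and the single-root claim is not obviously correct as stated), but since you cite \cite{BP} and \cite{MT} for the bound rather than rely on the heuristic, this aside does not affect the validity of your argument.
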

\begin{proof}
Note that $k_\xi$ is of compact support independent of $\xi$ from the Paley--Wiener theorem of \cite{Ga}; we may thus take $h_0$ so that the support of $k_\xi$ lies in $\{g\in G(F_\infty): d(g,H_{v_0}) <1\}$ for all $\xi$. 

We have the inversion formula
\begin{equation}\label{HCinversion}
k_\xi(g) = \frac{1}{|W_\R|} \int_{\ga^*} \widehat{k}_\xi(\mu) \varphi_\mu(g) |c(\mu)|^{-2} d\mu.
\end{equation}
We can now quote either Theorem 2 of \cite{BP} or Proposition 7.2 of \cite{MT}, and apply our conditions on $G_\infty$, to find
\be
\label{Blomer}
\varphi_\mu(x) \ll_C (1 + \| \mu \| d(x, K_\infty) )^{-1/2},
\ee
for $\mu \in \ga^*$ and $x$ in a compact set $C \subset G_\infty$. In the inversion formula \eqref{HCinversion}, we apply \eqref{c-beta} to bound the Plancherel density function, \eqref{Blomer} to bound the spherical function, and recall the concentration of $\widehat{k}_\xi$ around $-\xi$, to obtain
\[
k_\xi(x) \ll \beta(\xi) (1 + \| \xi \| d(x, K_\infty) )^{-1/2},
\]
whence the claim.
\end{proof}

The following lemma will be helpful when truncating spectral sums.  We let $B(\mu) \subset \ga^*$ denote the unit ball around $\mu \in \ga^*$.

\begin{lemma}
\label{hpositive}

We may choose the function $h_0$ so that the functions $h_\mu$, $\mu \in \ga^*$, have the following property: if $\lambda$ is a unitary spectral parameter with $\Re \lambda \in B(\mu)$, then $h_\mu(\lambda) \ge c > 0$ for some $c$ depending only on $G$.

\end{lemma}

\begin{proof}

Let $0 < \delta < 1$.  Let $b \in C^\infty_0(\ga)$ be non-negative, supported in the $\delta$-ball around 0, and satisfy $\int b = 1$.  We further assume that $b = b_0 * b_0$ for some even real-valued $b_0$, which ensures that $\widehat{b}(\nu)$ is non-negative for $\nu \in \ga^*$. We will show that $h_0 = \widehat{b}$ suffices in our construction of $h_\mu$.

We let $C > 0$ be a constant depending only on $G$ that may vary from line to line.  We start by showing that $\Re \widehat{b}(\nu) \geqslant -C\delta$ for all $\nu \in \ga^*_\C$ with $\| \Im \nu \| \leqslant \| \rho \|$.  We have
\begin{align*}
\widehat{b}(\nu) & = \int_\ga b(H) e^{-i \nu(H)} dH \\
& = \int_\ga b(H) [ e^{-i \Re \nu(H)} + ( e^{-i \nu(H)} - e^{-i \Re \nu(H)} ) ] dH \\
& = \widehat{b}(\Re \nu) + \int_\ga b(H)  e^{-i \Re \nu(H)} ( e^{\Im \nu(H)} - 1 ) dH.
\end{align*}
As $\| \Im \nu \| \leqslant \| \rho \|$, we have $| e^{\Im \nu(H)} - 1 | \leqslant C \delta$ for all $H \in \text{supp}(b)$, so that
\[
\Re \widehat{b}(\nu) \geqslant \widehat{b}(\Re \nu) - C\delta \geqslant -C\delta
\]
as required.

We now take $h_0 = \widehat{b}$, and construct $h_\mu^0$ and $h_\mu$ as before.  If we choose $\delta$ small, we will have $\Re h_0(\nu) \geqslant 1/2$ for all $\nu$ with $\| \Re \nu \| \leqslant 1$, $\| \Im \nu \| \leqslant \| \rho \|$.  Therefore if $\lambda \in \ga^*_\C$ is unitary with $\Re \lambda \in B(\mu)$, the bound $\| \Im \lambda \| \le \| \rho \|$ implies that
\begin{align*}
\Re h_\mu^0(\lambda) & = \sum_{w \in W_\R} \Re h_0(w\lambda - \mu) \\
& \geqslant \Re h_0(\lambda- \mu) - C \delta \geqslant 1/4.
\end{align*}
This gives $h_\mu(\lambda) = | h_\mu^0(\lambda) |^2 \geqslant 1/16$ as required.
\end{proof}

\subsection{The $S$-adic test function}\label{kS}

We choose the test function at places in $S$ to be $k_S=\omega_S\omega_S^*\in \mathcal{H}_S$, where $\omega_S$ is as in Section \ref{subsec:mainproof}.

\begin{lemma}\label{Hecke-fix}
The function $k_S$ satisfies the following.

\begin{enumerate}
\smallskip
\item[{\it (a)}] There is $B>0$ such that $\|g\|_S\ll P^B$ for all $g\in {\rm supp}(k_S)$.
\smallskip
\item[{\it (b)}] We have $\|k_S\|_\infty \ll P$.
\smallskip
\item[{\it (c)}] There is $C>0$ such that $\# ({\rm supp}\, k_S/K_S)\ll P^C$.
\end{enumerate}
The exponents $B$ and $C$ depend on the underlying choice of $\nu$ in the definition of $k_S$. All implied constants depend on $G$ and $\nu$.
\end{lemma}

\begin{proof}
(a)  Because supports add under convolution, it suffices to show this for $g \in \text{supp}(\tau(v,\nu))$ (or $\text{supp}(\tau(v,-\nu))$, which is similar and we omit) for some $v \in S$.  Our assumption that $\rho(K_w) \subset \SL_d(\cO_w)$ for all finite $w$ means that we only need to estimate $\| \rho(g) \|_v$.  Consider $\rho$ as a representation of $G$, and let $\Omega \subset X^*(T)$ be the multiset of weights of this representation.  It follows that $\rho(\nu(\varpi_v))$ is semisimple with eigenvalues $\{ \varpi_v^{\langle \omega, \nu \rangle } : \omega \in \Omega \}$.  By \cite[Lemma 2.17]{ST}, there is $x \in \GL_d(\cO_v)$ such that $x \rho(T_v) x^{-1}$ is diagonal, and so if we define $A = \max \{ - \langle \omega, \nu \rangle : \omega \in \Omega \}$ then we have $\| \rho(\nu(\varpi_v)) \|_v = \| x \rho(\nu(\varpi_v)) x^{-1} \|_v = q_v^A < P^A$.  Because $\rho(G(\cO_v)) \subset \SL_d(\cO_v)$, the same holds for $g \in \text{supp}(\tau(v,\nu))$.

\medskip

(b) We note that
\be\label{kS-expand}
k_S= \sum_{v\in S} \tau(v,\nu)\tau(v,\nu)^* + \sum_{\substack{v, w\in S\\ v\neq w}} \tau(v,\nu)\tau(w,\nu)^*.
\ee
The bound is clear for the second sum, because the terms satisfy $\|\tau(v,\nu)\tau(w,\nu)^* \|_\infty\leqslant 1$ and the supports of the terms are disjoint.  For the first sum, we have
\[
\| \tau(v,\nu)\tau(v,\nu)^* \|_\infty \leqslant  \| \tau(v,\nu) \|_2^2,
\]
and $\| \tau(v,\nu) \|_2 \ll 1$ follows from Lemma \ref{coset-vol}.

\medskip

(c)  We may write the first sum in \eqref{kS-expand} as a linear combination of $\tau(v,\mu)$ with $\mu$ lying in a finite set depending on $\nu$.  The bound now follows from Lemma \ref{coset-vol}.  The second sum may be treated similarly.
\end{proof}

\subsection{Proof of Proposition \ref{avgperiod} }\label{PfThm2}

We are now in a position to prove Proposition \ref{avgperiod}.  We first note that $k_\xi(1)\asymp \beta(\xi)$, where the upper and lower bounds follow from property ($\mathsf{P}_\infty$) and \cite[(3.44a)]{DKV}, respectively. Moreover, $\beta(\xi)$ is bounded above (and below) by a power of $(1+\|\xi\|)$.

We now apply Proposition \ref{reltrace} with test functions $k_S k_\xi$. Moreover, for every pair $v,w\in S$ we put $T=\{v,w\}$ and apply Theorem \ref{trace-guy} with test function $k_T k_\xi$ where $k_T=\tau(v,\nu)\tau(w,\nu)^*$; we then sum over such pairs $v,w$. As a result, we deduce the existence of constants $A>2$ (depending on $\nu$) and $\delta>0$ such that
\be\label{trace-error}
\mathrm{Vol}_N\sum_{i\geqslant 0} |\widehat{\omega_S}(\psi_i)|^2h_\xi(\xi_i) \asymp\omega_S\omega_S^*(1) \beta(\xi)+ O( P^A  N^{- \delta}\beta(\xi)(1+\|\xi\|)^{-\delta})
\ee
and
\be\label{relative-error}
\mathrm{Vol}_N\sum_{i\geqslant 0} |\widehat{\omega_S}(\psi_i)|^2 |\mathcal{P}_H(\psi_i)|^2h_\xi(\xi_i)\asymp \Pi_H \omega_S\omega_S^*(1)\beta(\xi)+O(P^A N^{-\delta}\beta(\xi)(1+\|\xi\|)^{-\delta}).
\ee
The error term in \eqref{trace-error} was obtained by observing that $\tau(v,\nu)\tau(w,\nu)^*\in\mathcal{H}_T^{\leqslant \kappa}$, for some $\kappa$ depending only on $\nu$, and $q_T=q_v q_w< P^2$, so that $q_T^{A\kappa +B}$ is bounded by a power of $P$; we must also insert the $L^\infty$ norm estimate for $k_T$ coming from the proof of Lemma \ref{Hecke-fix}. The error term in \eqref{relative-error} was obtained by taking $B$ as in Lemma \ref{Hecke-fix} and setting $R=P^B$ in condition $({\mathsf P}_S)$, and inserting the $L^\infty$ norm estimate of Lemma \ref{Hecke-fix}.  It remains then to explicate the size of $\omega_S\omega_S^*(1)$ and $\Pi_H \omega_S\omega_S^*(1)$ (the latter under our extra condition on $\nu$), and truncate the spectral sums. We continue to use the convention that the values of the exponents $A,\delta>0$ can vary from line to line.

We begin with \eqref{trace-error}. Lemma \ref{Hecke-fix} (b) gives $\omega_S\omega_S^*(1)\ll P$. By positivity we may truncate the spectral sum to obtain the upper bound
\[
\mathrm{Vol}_N\sum_{\|\Re \xi_i-\xi\|\leqslant 1} |\widehat{\omega_S}(\psi_i)|^2 h_\xi (\xi_i)\ll P \beta(\xi)(1+ P^A N^{-\delta}(1+\|\xi\|)^{-\delta}).
\]
Combined with Lemma \ref{hpositive}, this implies \eqref{avgupper}.

We next examine \eqref{relative-error}. We begin by observing that if $\mu \in \ga^*$, then we may again truncate the spectral sum and apply Lemma \ref{hpositive} to obtain
\be
\label{spectral-error}
{\rm Vol}_N\sum_{\|\Re \xi_i-\mu\|\leqslant 1} |\widehat{\omega_S}(\psi_i)|^2 | \mathcal{P}_H(\psi_i)|^2 \ll \Pi_H \omega_S\omega_S^*(1) \beta(\mu) + P^A \beta(\mu) (1 + \| \mu \|)^{-\delta} N^{-\delta}.
\ee
We next use this to truncate the spectral sum about $\xi$. 

\begin{lemma}
\label{relative-truncate}

For any $Q>1$ the left-hand side of \eqref{relative-error} can be written as
\begin{align*}
\mathrm{Vol}_N\sum_{\| \Re \xi_i-\xi\|\leqslant Q} |\widehat{\omega_S}&(\psi_i)|^2 | \mathcal{P}_H(\psi_i)|^2h_\xi(\xi_i)\\
& + O_M( \Pi_H \omega_S\omega_S^*(1) \beta(\xi) Q^{-M})+O(P^A N^{-\delta}  \beta(\xi)(1 + \| \xi \|)^{-\delta}).
\end{align*}

\end{lemma}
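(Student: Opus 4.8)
The plan is to peel off the head of the spectral sum by positivity and then estimate the tail by reapplying Proposition \ref{reltrace} to a superposition of positive spherical bumps. Since every $\xi_i$ is a unitary spectral parameter, Lemma \ref{realparam} gives $h_\xi(\xi_i) = |h_\xi^0(\xi_i)|^2 \geqslant 0$, so each summand $|\widehat{\omega_S}(\psi_i)|^2 |\mathcal{P}_H(\psi_i)|^2 h_\xi(\xi_i)$ is nonnegative; writing the left-hand side of \eqref{relative-error} as the displayed head sum over $\|\Re\xi_i - \xi\| \leqslant Q$ plus the complementary tail sum, it suffices to bound the tail by $O_M(\Pi_H\omega_S\omega_S^*(1)\beta(\xi)Q^{-M}) + O(P^A N^{-\delta}\beta(\xi)(1+\|\xi\|)^{-\delta})$. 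For the decay input, note that $h_0$, being of Paley--Wiener type, extends to an entire function on $\ga_\C^*$ decaying faster than any polynomial on horizontal tubes, and that each $\xi_i$, being the spectral parameter of a unitary representation, lies in a fixed tube $\|\Im\xi_i\| \leqslant C_0$ by the trivial bound toward Ramanujan at $v_0$. Combining these, for any $M$ and any fixed $M_0$,
\[
h_\xi(\xi_i) = \Big( \sum_{w\in W_\R} h_0(w\xi_i - \xi) \Big)^2 \ll_{M,M_0} Q^{-M} (1+\|\Re\xi_i-\xi\|)^{-M_0} \qquad \text{when } \|\Re\xi_i - \xi\| > Q .
\]
So it remains to show, for $M_0$ fixed sufficiently large, that
\[
\mathrm{Vol}_N \sum_{i\geqslant 0} |\widehat{\omega_S}(\psi_i)|^2 |\mathcal{P}_H(\psi_i)|^2 (1+\|\Re\xi_i-\xi\|)^{-M_0} \ll \Pi_H\omega_S\omega_S^*(1)\beta(\xi) + P^A N^{-\delta}\beta(\xi)(1+\|\xi\|)^{-\delta},
\]
after which multiplying by $Q^{-M}$ and absorbing $Q^{-M}P^A N^{-\delta}\beta(\xi)(1+\|\xi\|)^{-\delta}$ into the last error term (using $Q>1$) gives the lemma.

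The displayed bound I would prove by a partition-of-unity comparison with Proposition \ref{reltrace}. Fix a lattice $\Lambda \subset \ga^*$ whose covering radius $\epsilon_0$ is small enough that, uniformly in $\xi' \in \Lambda$, one has $h_{\xi'}(\nu) = |h_{\xi'}^0(\nu)|^2 \gg 1$ for every unitary $\nu$ with $\|\Re\nu - \xi'\| \leqslant \epsilon_0$; this holds because $h_{\xi'}^0(\xi') \geqslant h_0(0) = 1$ and the Weyl group acts by $\|\cdot\|$-isometries, so $h_{\xi'}^0$ is Lipschitz near $\xi'$ with a constant independent of $\xi'$. (The non-tempered part of the spectrum, confined to the tube $\|\Im\xi_i\|\leqslant C_0$, is handled the same way after allowing the centres to carry a bounded imaginary part, or, more crudely, by a separate Weyl-law estimate, whose polynomially weighted count of $\xi_i$ with $\|\Re\xi_i-\xi\|>Q$ is annihilated by the super-polynomial decay above.) This gives the pointwise domination $(1+\|\Re\nu - \xi\|)^{-M_0} \ll \sum_{\xi'\in\Lambda} (1+\|\xi'-\xi\|)^{-M_0} h_{\xi'}(\nu)$ over the unitary spectrum, since $1+\|\Re\nu-\xi\| \asymp 1+\|\xi'-\xi\|$ whenever $\nu$ lies within $\epsilon_0$ of $\xi'$.

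Inserting this domination, exchanging the (convergent, nonnegative) sums, and applying Proposition \ref{reltrace} with the same $k_S = \omega_S\omega_S^*$ and archimedean test function $k_{\xi'}$ for each $\xi'$ — each satisfying $(\mathsf{P}_\infty)$ by Lemma \ref{BPlemma}, with support independent of $\xi'$ — bounds the left-hand side by
\[
\sum_{\xi'\in\Lambda} (1+\|\xi'-\xi\|)^{-M_0} \Big( \mathrm{vol}([H])\,\Pi_H\omega_S\omega_S^*(1)\,k_{\xi'}(1) + O\big(P^A N^{-\delta}\beta(\xi')(1+\|\xi'\|)^{-\delta}\big) \Big),
\]
with the implied constants uniform in $\xi'$ (here $A$, $\delta$, and the $P$-power depend on $\nu$ exactly as in \eqref{relative-error}). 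Using $k_{\xi'}(1) \asymp \beta(\xi')$, the elementary bounds $\beta(\xi') \ll \beta(\xi)(1+\|\xi'-\xi\|)^{C}$ and $(1+\|\xi'\|)^{-\delta} \ll (1+\|\xi\|)^{-\delta}(1+\|\xi'-\xi\|)^{\delta}$, and choosing $M_0 > \dim\ga + C + \delta$ so that $\sum_{\xi'\in\Lambda}(1+\|\xi'-\xi\|)^{-M_0+C+\delta}$ converges, the main and error contributions sum to the asserted shape, completing the argument.

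The main obstacle is precisely this last step: securing the uniform lower bound $h_{\xi'}(\nu) \gg 1$ near each lattice centre simultaneously over the entire unitary spectrum — in particular over its non-tempered, non-compact part — and checking that the error and volume constants in Proposition \ref{reltrace} are uniform in the shift $\xi'$, so that the series over $\Lambda$ converges; this uniformity is exactly what forces $M_0$ to be taken large relative to $\dim\ga$ and to the exponent governing the polynomial growth of $\beta$.
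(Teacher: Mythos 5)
Your overall strategy coincides with the paper's: both decompose the spectral parameter space (the paper into overlapping unit balls $B(\mu_n)$, you into a lattice partition of unity), use the rapid decay of $h_\xi$ to extract the $Q^{-M}$ saving on the tail, reapply the relative trace formula with test functions recentred at each $\mu_n$ (resp.\ $\xi'$), and then use $\beta(\mu_n) \ll \beta(\xi)\|\mu_n - \xi\|^k$ to make the outer sum converge. The structure and the convergence bookkeeping are essentially identical.

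The gap is the one you yourself flag, but your sketch for closing it does not work as written. Your Lipschitz argument gives $h_{\xi'}^0(\nu) \geqslant 1 - L\epsilon_0$ only for $\nu$ \emph{real} (tempered). For a complementary-series spectral parameter $\nu$ with $\|\Im\nu\|$ as large as $\|\rho\|$ — which is bounded but not small — the restriction of a Paley--Wiener function to a horizontal tube of that width can oscillate arbitrarily, and $h_0(\Re\nu - \xi' + i\Im\nu)$ has no reason to stay close to $h_0(\Re\nu - \xi')$, let alone to $1$. This is precisely what the paper's Lemma \ref{spectral-error} is constructed to handle: they take $h_0 = \widehat{b}$ with $b = b_0 * b_0$ nonnegative and supported in a $\delta$-ball, so that $\Re\widehat{b}(\nu) \geqslant \widehat{b}(\Re\nu) - C\delta$ for $\|\Im\nu\| \leqslant \|\rho\|$ (via $|e^{\Im\nu(H)} - 1| \leqslant C\delta$ on $\textup{supp}\, b$), from which $\Re h_\mu^0(\lambda) \geqslant 1/4$ and hence $h_\mu(\lambda) \geqslant 1/16$ uniformly. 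Your fallback ``allow the centres to carry a bounded imaginary part'' would require re-verifying $({\mathsf P}_\infty)$ for a $k_{\xi''}$ with complex centre and is not clearly compatible with the concentration property used in Lemma \ref{BPlemma}; your other fallback, a ``separate Weyl-law estimate,'' does not apply because the sum is weighted by $|\widehat{\omega_S}(\psi_i)|^2|\mathcal{P}_H(\psi_i)|^2$, and the only tool available for bounding that weighted count is the relative trace formula itself — which brings you back to needing the uniform positivity. So the missing piece is the specific construction of $h_0$, not merely a technical check.
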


\begin{proof}
We must show that the sum
\[
\mathfrak{S}={\rm Vol}_N\sum_{\|\Re \xi_i-\xi\|> Q} |\widehat{\omega_S}(\psi_i)|^2 | \mathcal{P}_H(\psi_i)|^2h_\xi(\xi_i)
\]
satisfies
\[
\mathfrak{S}=O_M( \Pi_H \omega_S\omega_S^*(1) \beta(\xi) Q^{-M})+O(P^A N^{-\delta}  \beta(\xi)(1 + \| \xi \|)^{-\delta}).
\]
Break the region in the positive chamber $\ga^*_+$ defined by $\|\mu-\xi\|> Q$ into an overlapping union of unit balls $B(\mu_n)$ centered at points $\mu_n\in\ga^*$.  Because unitary spectral parameters $\mu$ satisfy $\| \Im \mu \| \leqslant \| \rho \|$, they also satisfy the rapid decay estimate $h_\xi(\mu)\ll_{M} \| \Re \mu-\xi\|^{-M}$.  Applying this on each ball gives
\[
\sum_{\Re \xi_i\in B(\mu_n)} |\widehat{\omega_S}(\psi_i)|^2 | \mathcal{P}_H(\psi_i)|^2h_\xi (\xi_i)\ll_{M} \|\mu_n-\xi\|^{-M}\sum_{\Re \xi_i\in B(\mu_n)} |\widehat{\omega_S}(\psi_i)|^2 | \mathcal{P}_H(\psi_i)|^2.
\]
Applying \eqref{spectral-error} and summing over $n$ we obtain
\[
\mathfrak{S}\ll_{M} \Pi_H \omega_S\omega_S^*(1) \sum_n \|\mu_n-\xi\|^{-M} \beta(\mu_n) 
+ P^A N^{-\delta} \sum_n \|\mu_n-\xi\|^{-M} (1 + \| \mu_n \|)^{-\delta} \beta(\mu_n).
\]
From $\beta(\mu_n) \ll \| \mu_n - \xi \|^k \beta(\xi)$, where $k$ is the number of roots of $G$ counted with multiplicity, we may simplify this to
\[
\mathfrak{S}\ll_{M} \Pi_H \omega_S\omega_S^*(1) \beta(\xi) \sum_n \|\mu_n-\xi\|^{-M} 
+ \beta(\xi) P^A N^{-\delta} \sum_n \|\mu_n-\xi\|^{-M} (1 + \| \mu_n \|)^{-\delta}.
\]
The first sum is $\ll_M Q^{-M}$. We bound the second sum by breaking it into $\| \mu_n \| \leqslant \| \xi \|/2$ and the complement. The first sum is $\ll (1 + \| \xi \|)^{-M}$, and the second is $\ll Q^{-M} (1 + \| \xi \|)^{-\delta}$. Both of these are dominated by $(1 + \| \xi \|)^{-\delta}$, which finally shows that
\[
\mathfrak{S}\ll_{M} \Pi_H \omega_S\omega_S^*(1) \beta(\xi) Q^{-M} +  P^A N^{-\delta} \beta(\xi)(1 + \| \xi \|)^{-\delta},
\]
as desired.
\end{proof}

We combine Lemma \ref{relative-truncate} (taking any $M>0$ and large enough $Q$) and \eqref{relative-error} to obtain
\[
\mathrm{Vol}_N\sum_{\|\Re \xi_i-\xi\|\leqslant Q} |\widehat{\omega_S}(\psi_i)|^2 | \mathcal{P}_H(\psi_i)|^2h_\xi(\xi_i) \asymp \Pi_H \omega_S\omega_S^*(1)\beta(\xi) + O( P^A N^{-\delta}\beta(\xi) (1 + \| \xi \|)^{-\delta}).
\]
We now make use of the critical assumptions that $G$ is $H$-large, and $\nu\in X_*(T_H)$ satisfies \eqref{K-large-cond}, to bound from below the right hand side.
\begin{lemma}\label{nu-choice}
If $\nu\in X_*(T_H)$ satisfies \eqref{K-large-cond} then $\Pi_H \omega_S\omega_S^*(1)\gg_\epsilon P^{2-\epsilon}$.
\end{lemma}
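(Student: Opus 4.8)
The plan is to use the fact that $k_S=\omega_S\omega_S^*$ writes $\Pi_H\omega_S\omega_S^*(1)$ as a sum of \emph{nonnegative} terms indexed by pairs $(v,w)\in S\times S$, so that keeping only the $\asymp(\#S)^2$ off‑diagonal terms — each of which is $\gg 1$ precisely because $\nu$ satisfies \eqref{K-large-cond} — already yields the bound. Concretely, write $\omega_S=\sum_{v\in S}\tau(v,\nu)$ inside $\cH_S$, each $\tau(v,\nu)$ acting as ${\bf 1}_{K_u}$ at the places $u\in S\setminus\{v\}$, and note that $\tau(v,\nu)^*=\tau(v,-\nu)$, since $(K_v\nu(\varpi_v)K_v)^{-1}=K_v(-\nu)(\varpi_v)K_v$ and $\|-\nu\|^*=\|\nu\|^*$. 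Then
\[
\Pi_H\omega_S\omega_S^*(1)=\int_{H(F_S)}(\omega_S*\omega_S^*)(h)\,dh=\sum_{v,w\in S}\int_{H(F_S)}\!\big(\tau(v,\nu)*\tau(w,-\nu)\big)(h)\,dh .
\]
Because $dh_u$ gives $H(\cO_u)=H_u\cap K_u$ volume one, the coordinates $u\neq v,w$ integrate trivially, so for $v\neq w$ the $(v,w)$‑summand equals $\Pi_{H_v}\tau(v,\nu)(1)\cdot\Pi_{H_w}\tau(w,-\nu)(1)$, while the diagonal summand $\int_{H_v}(\tau(v,\nu)*\tau(v,\nu)^*)(h)\,dh$ equals the squared $L^2(H_v\backslash G_v)$‑norm of the $H_v$‑period of $\tau(v,\nu)$ by a routine unfolding (as in the proof of Lemma \ref{periodmeasure}), hence is $\geqslant 0$. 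Since the functions $\tau(v,\pm\nu)$ are themselves nonnegative, every summand is $\geqslant 0$, and I may discard the diagonal to get
\[
\Pi_H\omega_S\omega_S^*(1)\;\geqslant\;\sum_{\substack{v,w\in S\\ v\neq w}}\Pi_{H_v}\tau(v,\nu)(1)\,\Pi_{H_w}\tau(w,-\nu)(1).
\]

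Next I would bound each factor from below uniformly in $v$. Unwinding definitions, $\Pi_{H_v}\tau(v,\mu)(1)=q_v^{-\|\mu\|^*}\,{\rm vol}_{H_v}\!\big(H_v\cap K_v\mu(\varpi_v)K_v\big)$, and for $\mu\in X_*(T_H)$ one has $K_{H,v}\mu(\varpi_v)K_{H,v}\subseteq H_v\cap K_v\mu(\varpi_v)K_v$ since $K_{H,v}\subseteq K_v$ and $\mu(\varpi_v)\in T_H(F_v)\subseteq H_v$. For $v\in S\subset\cP$ the torus $T_H$ splits at $v$, so $H_v$ is split with $K_{H,v}=H(\cO_v)$ hyperspecial and lying in the apartment of $T_H$ (properties \ref{local2} and \ref{local3}), and Lemma \ref{coset-vol} applied to $H_v$ gives ${\rm vol}_{H_v}(K_{H,v}\mu(\varpi_v)K_{H,v})=\#K_{H,v}\mu(\varpi_v)K_{H,v}/K_{H,v}\asymp q_v^{2\|\mu\|_H^*}$ with implied constants depending only on $\dim H$. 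Hence $\Pi_{H_v}\tau(v,\mu)(1)\gg q_v^{\,2\|\mu\|_H^*-\|\mu\|^*}$ uniformly in $v$. Taking $\mu=\pm\nu$ and using $\|-\nu\|^*=\|\nu\|^*$, $\|-\nu\|_H^*=\|\nu\|_H^*$ together with the hypothesis $2\|\nu\|_H^*\geqslant\|\nu\|^*$, we obtain $\Pi_{H_v}\tau(v,\pm\nu)(1)\gg q_v^{\,2\|\nu\|_H^*-\|\nu\|^*}\geqslant 1$, with implied constant independent of $v$.

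Combining these, $\Pi_H\omega_S\omega_S^*(1)\gg\#\{(v,w)\in S\times S:v\neq w\}=\#S(\#S-1)$, so it remains only to count $S=\{v\in\cP:P/2\leqslant q_v<P\}$. The degree‑one places of $F$ of norm in $[P/2,P)$ already number $\gg P/\log P$ by the prime number theorem for $F$, and imposing the Chebotarev condition that such a place split in the splitting field of $T$ (hence of $T_H$) while not dividing $D$ only costs a positive constant factor; thus $\#S\gg P/\log P\gg_\epsilon P^{1-\epsilon/2}$ and $\#S(\#S-1)\gg_\epsilon P^{2-\epsilon}$, as claimed. There is no genuine difficulty beyond bookkeeping: the two points to watch are that the constant from Lemma \ref{coset-vol} stays uniform in $v$ so that it survives summation over $\asymp(\#S)^2$ pairs, and that it is essential to retain the cross terms $v\neq w$ rather than only the diagonal — the diagonal alone gives merely $\gg\#S\asymp P/\log P$, far short of $P^{2-\epsilon}$. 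This is exactly the reason $\omega_S$ is squared up to $\omega_S\omega_S^*$: positivity rules out any cancellation among the pair contributions.
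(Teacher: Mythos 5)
Your proof is correct and follows essentially the same route as the paper's: factor the $(v,w)$-summand into a product of local $H$-periods, bound each via Lemma \ref{coset-vol} together with the hypothesis $2\|\nu\|_H^*\geqslant\|\nu\|^*$ (applied to both $\nu$ and $-\nu$), and sum over $S\times S$. You supply two details the paper leaves tacit — that the diagonal summands are nonnegative so they may be discarded, and that $\#S\gg_\epsilon P^{1-\epsilon}$ by Chebotarev — but these are exactly the implicit steps, not a different argument.
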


\begin{proof}
Note that for any $v\in\mathcal{P}$ and $\nu\in X_*(T_H)$ we have
\[
q_v^{\|\nu\|^*}\int_{H_v} \tau(v,\nu)(x) dx= \text{vol}( H_v \cap K_v \nu(\varpi_v) K_v)  \geqslant \text{vol}( K_{H,v} \nu(\varpi_v) K_{H,v})\gg q_v^{2 \| \nu \|_H^*},
\]
where we have used our assumption that $T$, and hence $T_H$, is split at $v$, and applied the lower bound from Lemma \ref{coset-vol}. If $\nu\in X_*(T_H)$ satisfies \eqref{K-large-cond}, then so does $-\nu$, and applying the above bound with $\pm\nu$, we get $\int_{H(F_S)} \tau(v,\nu)\tau(w,\nu)^* \gg 1$ if $v \neq w$. Summing over $v,w\in S$ yields the claim. 
\end{proof}

We deduce from the above lemma that
\[
\mathrm{Vol}_N\sum_{\|\Re \xi_i-\xi\|\leqslant Q} |\widehat{\omega_S}(\psi_i)|^2 | \mathcal{P}_H(\psi_i)|^2h_\xi(\xi_i) \gg_\epsilon P^{2 - \epsilon} \beta(\xi)(1 + O( P^A N^{-\delta} (1 + \| \xi \|)^{-\delta}) ).
\]
The bound $h_\xi(\xi_i) \ll 1$ completes the proof of \eqref{avglower}.

\section{The amplified relative trace formula}\label{sec:artf}

We now prove Proposition \ref{reltrace}. Recall the notation introduced in Section \ref{subsec:rtf}.  For $\phi={\bf 1}_N^S\otimes k_S\otimes k_\xi$ we let
\[
K(x,y)=\sum_{\gamma \in G(F) } \phi(x^{-1} \gamma y)\qquad\text{and}\qquad K_H(x,y)=\sum_{\gamma \in H(F) } \phi(x^{-1} \gamma y).
\]
Integrating the spectral expansion
\[
K(x,y)= \mathrm{Vol}_N\sum_{i\geqslant 0} \widehat{k_S\otimes k_\xi}(\psi_i) \psi_i(x) \overline{\psi_i(y)}
\]
over $[H] \times [H]$, we obtain
\bes
\int_{[H] \times [H]} K(x,y) dx dy = \mathrm{Vol}_N\sum_{i\geqslant 0} \widehat{k_S\otimes k_\xi}(\psi_i) |\mathcal{P}_H(\psi_i)|^2.
\ees
On the other hand, by unfolding we have
\[
\int_{[H] \times [H]} K_H(x,y)dx dy = \mathrm{vol}([H])\int_{H(\A)} \phi(x) dx= \mathrm{vol}([H])\Pi_Hk_S(1) k_\xi(1).
\]
We have used the fact that $K^S(N)K^S_H\cap H(\A_f^S)=K^S_H$ and the volume of this is ${\rm vol}\, K_H^S=1$.

For the remaining terms, first observe that if $x, y \in H(\A)$ we have 
\be
\label{offdiagcount}
\# \{ G(F)\cap {\rm supp}( \phi(x^{-1} \cdot y) )\} = \# \{ x^{-1} G(F) y \cap {\rm supp}(\phi)\}\ll \# ({\rm supp}\, k_S/K_S),
\ee
uniformly in $x$, $y$, and $N$. Note that the left hand side of this bound only depends on the image of $x$ and $y$ in $[H]$, so we may assume that they lie in a fixed compact set.  To prove \eqref{offdiagcount}, let $g, g' \in x^{-1} G(F) y \cap {\rm supp}(\phi)$ with $g = x^{-1} \gamma y$, $g' = x^{-1} \gamma' y$, and suppose that $g, g'$ lie in the same coset in ${\rm supp}\, k_S/K_S$. Then $g_\infty^{-1}g_\infty'$ lies in a fixed compact set and $g_f^{-1}g_f'\in K$, so that $g^{-1} g'$ also lies in a fixed compact set. We have $g^{-1} g' = y^{-1} \gamma^{-1} \gamma' y$, so that $\gamma^{-1} \gamma'$ lies in a fixed compact set, and so if $g$ is fixed there are only finitely many possibilities for $g'$.  Therefore the map $x^{-1}G(F)y \cap {\rm supp}(\phi) \rightarrow  {\rm supp}\, k_S/K_S$ has $O(1)$ fibers, which proves \eqref{offdiagcount}. Using this, we simply estimate
\bes
\int_{[H] \times [H]} \sum_{\gamma \in G(F) - H(F) } \phi(x^{-1} \gamma y) dx dy 
\ees
with the pointwise bounds of Corollary \ref{fsmall}.

\subsection{Bounding the off-diagonal contributions}
In this section we establish Corollary \ref{fsmall}, which was used in the proof of Proposition \ref{reltrace} above. It is based on the following Diophantine lemma which shows, roughly speaking, that any $\gamma \in G(F) - H(F)$ cannot be too close to $H_v$ for various $v$.  We shall use the notation introduced in Section \ref{sec:metrics}.

\begin{lemma}\label{dioph}
If $C > 0$ is given, there are $A, C_1, C_2 > 0$ such that the following properties hold for any $\gamma \in G(F) - H(F)$:
\begin{enumerate}
\item\label{dioph1}  $d(\gamma, H_{v_0}) \geqslant C_1 \| \gamma \|_f^{-A}$;

\item \label{dioph2} if $(N,D) = 1$ is such that $N > C_2 \| \gamma \|_f^A$, and $d(\gamma, H_{v_0}) < C$, then there is a place $v | N$ such that $\gamma_v \notin K(N)_v K_{H,v}$.
\end{enumerate}
\end{lemma}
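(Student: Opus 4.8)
The two assertions are both instances of the same phenomenon: an element $\gamma \in G(F) - H(F)$ is an $F$-point lying off the subvariety $H$, so its ``distance to $H$'' in any completion is controlled, by elementary height-function estimates, in terms of the heights of $\gamma$ at the other places. The plan is to fix a set of polynomial equations cutting out $H$ inside $G \subset \SL_d$ over $\cO[1/D]$, say $H = \{ g \in G : P_1(g) = \cdots = P_r(g) = 0 \}$ with the $P_j$ having coefficients in $\cO[1/D]$, and to use the vector $P(\gamma) = (P_1(\gamma), \dots, P_r(\gamma))$ as a measure of how far $\gamma$ is from $H$. Since $\gamma \notin H(F)$, we have $P(\gamma) \neq 0$, and $P(\gamma) \in F^r$.

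\textbf{Step 1 (comparison of $d(\gamma, H_{v_0})$ with $|P(\gamma)|_{v_0}$).} First I would show that for $\gamma$ in a bounded region of $G(F_{v_0})$ (or without that restriction, at the cost of a worse power of $\|\gamma\|_{v_0}$) one has $d(\gamma, H_{v_0}) \gg |P(\gamma)|_{v_0}^{c}$ for suitable $c > 0$, or at least $d(\gamma, H_{v_0}) \geqslant$ a positive power of $|P(\gamma)|_{v_0}$ divided by a power of $\|\gamma\|_{v_0}$. This is a \L ojasiewicz-type inequality: $H_{v_0}$ is the real zero set of the $P_j$, and near $H_{v_0}$ the Euclidean distance to $H_{v_0}$ is bounded below by a power of $\max_j |P_j|$. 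To keep things uniform at infinity, one notes that the $P_j$ are polynomial in the matrix entries of $\rho(\gamma)$, so on the region $\|\gamma\|_{v_0} \leqslant R$ we get $|P(\gamma)|_{v_0} \ll_R d(\gamma, H_{v_0})$ (Lipschitz bound), while the reverse \L ojasiewicz bound gives $d(\gamma, H_{v_0}) \ll |P(\gamma)|_{v_0}^{c}$ on a fixed neighborhood of $H_{v_0}$; since we only ever apply part (1) and part (2) for $\gamma$ with $d(\gamma, H_{v_0})$ bounded (by $1$, or by $C$), we stay in such a neighborhood and this is exactly the regime we need.

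\textbf{Step 2 (the product formula / height bound).} Since $P(\gamma) \in F^r$ is nonzero, the product formula gives $\prod_v \max_j |P_j(\gamma)|_v^{n_v} = 1$ (with $n_v$ the local degrees and a suitable normalization), hence $|P(\gamma)|_{v_0}$ is bounded below by the product over all other places of $|P(\gamma)|_v^{-1}$, up to normalization constants. At archimedean $v \neq v_0$ and at the finitely many bad places $v \mid D$, crudely bounding $|P_j(\gamma)|_v$ by a power of $\|\gamma\|_v$ costs a bounded factor; at the remaining finite places the $P_j$ have $\cO_v$-integral coefficients, so $|P_j(\gamma)|_v \leqslant \|\gamma\|_v^{\deg P_j}$, and the product of these over all finite $v$ is $\leqslant \|\gamma\|_f^{A_0}$ for $A_0 = \max_j \deg P_j$. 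Combining, $|P(\gamma)|_{v_0} \gg \|\gamma\|_f^{-A_0}$ (times a bounded archimedean contribution absorbed into the constant, using that $\gamma \in G(F)$ so its archimedean norms at $v \neq v_0$ are controlled by its norm at $v_0$ and its finite norms — here one uses that $G_v$ is \emph{compact} for real $v \neq v_0$, so $\|\gamma\|_v = O(1)$ at those places, which is a pleasant simplification). Feeding this into Step 1 yields part (1): $d(\gamma, H_{v_0}) \geqslant C_1 \|\gamma\|_f^{-A}$ for appropriate $A, C_1$.

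\textbf{Step 3 (the non-archimedean version, part (2)).} For part (2), suppose $d(\gamma, H_{v_0}) < C$; then by Step 1, $|P(\gamma)|_{v_0} \geqslant c_1 \|\gamma\|_{v_0}^{-A_1}$ for suitable constants, and since $\|\gamma\|_{v_0}$ is itself controlled by $\|\gamma\|_f$ via the product formula applied to the matrix entries of $\rho(\gamma)$ (again using compactness at the other real places), we get $|P(\gamma)|_{v_0} \gg \|\gamma\|_f^{-A_2}$. Now suppose, for contradiction, that $\gamma_v \in K(N)_v K_{H,v}$ for \emph{every} $v \mid N$. Writing $\gamma_v = \kappa_v h_v$ with $\kappa_v \in K(N)_v$ (congruent to $1$ mod $\varpi_v^{\ord_v N}$) and $h_v \in K_{H,v} \subset H(\cO_v)$, we have $P_j(h_v) = 0$, so $P_j(\gamma_v) = P_j(\kappa_v h_v) \equiv P_j(h_v) = 0 \pmod{\varpi_v^{\ord_v N}}$ because the $P_j$ have $\cO_v$-integral coefficients and $\kappa_v \equiv 1$; more precisely, $|P_j(\gamma_v)|_v \leqslant |N|_v \cdot (\text{something bounded by a power of } \|\gamma_v\|_v)$, and for $v \mid N$ with $v \nmid D$ we have $\|\gamma_v\|_v \leqslant 1$ would be too strong in general, but $\|\gamma_v\|_v$ enters only the product $\|\gamma\|_f$. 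Collecting these over $v \mid N$: $\prod_{v \mid N} |P_j(\gamma)|_v \leqslant \prod_{v\mid N}|N|_v \cdot (\cdots) = N^{-1} (\cdots)$. Combined with $|P_j(\gamma)|_v \leqslant \|\gamma\|_v^{A_0}$ for the other finite $v$, this forces $|P(\gamma)|_{v_0} \gg \|\gamma\|_f^{-A_2}$ to compete with an upper bound of the form $\|\gamma\|_f^{A_0} / N$, i.e. $N \ll \|\gamma\|_f^{A_2 + A_0}$. Taking $C_2$ larger than the implied constant and $A = A_2 + A_0$, the hypothesis $N > C_2 \|\gamma\|_f^A$ contradicts this, so some $v \mid N$ must have $\gamma_v \notin K(N)_v K_{H,v}$.

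\textbf{Main obstacle.} The substantive point is Step 1, the \L ojasiewicz inequality relating $d(\gamma, H_{v_0})$ to $|P(\gamma)|_{v_0}$ with uniform constants on a neighborhood of $H_{v_0}$; one must either invoke the classical \L ojasiewicz inequality for the real-analytic (indeed polynomial) function $\max_j |P_j|^2$ vanishing on the compact set $H_{v_0}$, or arrange an explicit set of defining equations for $H$ for which a power bound is visible. Everything else is bookkeeping with the product formula, made easier here by the compactness of $G$ at the auxiliary real places, which trivializes the archimedean contributions away from $v_0$. A secondary care-point is tracking the $\cO_v$-integrality of the $P_j$ at all $v \nmid D$, which is exactly what the choice of $D$ in \S\ref{sec:not} (smoothness of the schematic closures over $\cO[1/D]$) guarantees.
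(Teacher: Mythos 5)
Your overall plan — fix polynomial equations $p_1,\dots,p_k \in \cO[1/D,x_1,\dots,x_{d^2}]$ cutting out $H$ inside $G\subset\SL_d$, use the nonvanishing of some $p_i(\gamma)$, apply the product formula, and at the $p$-adic places use the $\cO_v$-integrality of the $p_i$ together with the congruence $\kappa_v\equiv 1\pmod{\varpi_v^{\ord_v N}}$ — is exactly what the paper does, and Steps 2 and 3 are essentially a correct reconstruction of the paper's proof. (A small slip: $\prod_{v\mid N}|N|_v = N^{-[F:\Q]}$, not $N^{-1}$, though this only changes an exponent.)

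The one genuine confusion is in Step 1 and the ``main obstacle'' paragraph. To pass from a lower bound $|p_i(\gamma)|_{v_0}\gg\|\gamma\|_f^{-A}$ to a lower bound on $d(\gamma,H_{v_0})$, what you need is the \emph{Lipschitz} direction: since each $p_i$ vanishes identically on $H_{v_0}$ and is polynomial (hence locally Lipschitz), one has $|p_i(\gamma)|_{v_0}\ll d(\gamma,H_{v_0})$ for $\gamma$ in any bounded set, and if $\gamma$ is unbounded at $v_0$ then $d(\gamma,H_{v_0})$ is automatically large by compactness of $H_{v_0}$. This is elementary. The \L ojasiewicz inequality goes the \emph{other} way: it asserts $d(x,Z)^\alpha \leqslant C|f(x)|$, i.e.\ $d\ll |p_i|^{c}$, which is an upper bound on $d$ in terms of $|p_i|$ and is both logically useless here and much deeper than what is required. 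Your ``main obstacle'' paragraph promotes \L ojasiewicz to the key input, but if you actually tried to run the argument with it you would find it does not produce the needed lower bound on $d(\gamma,H_{v_0})$; the Lipschitz bound you also mention in passing is the one doing the work, and it is the one the paper tacitly invokes when it says ``and so $d(\gamma,H_{v_0})$ satisfies the same bound.'' Once this direction confusion is fixed, the proof is correct and coincides with the paper's.
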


\begin{proof}
We consider $H$ and $G$ as subvarieties of $F^{d^2}$ via the embedding $\rho$. Let $p_1, \ldots, p_k \in \cO[1/D, x_1, \ldots, x_{d^2}]$ be a set of defining polynomials for $H$ that are integral over $\cO[1/D]$. Now if $\gamma \in G(F) - H(F)$ then $p_i(\gamma) \neq 0$ for some $i$.

There are $A, C_3 > 0$ such that for all $\gamma \in G(F)$ one has $\prod_{v\nmid\infty}|p_i(\gamma)|_v \leqslant C_3 \| \gamma \|_f^{A}$. For $\gamma \in G(F) - H(F)$ the product formula applied to $p_i(\gamma)\in F^\times$ implies that the archimedean norms satisfy $\prod_{v \mid \infty} | p_i(\gamma) |_v \geqslant \| \gamma \|_f^{-A} /C_3$. Because $G_v$ is compact for all $v \mid \infty$ other than $v_0$, $| p_i(\gamma) |_v$ is bounded for all such $v$.  But then we have $| p_i(\gamma) |_{v_0} \geqslant C_1 \| \gamma \|_f^{-A}$ for some $C_1 > 0$, and so $d(\gamma, H_{v_0})$ satisfies the same bound. This establishes \eqref{dioph1}.

As above there are $A_1, C_3 > 0$ such that $\prod_{v \nmid \infty N} | p_i(\gamma) |_v \leqslant C_3 \| \gamma \|_f^{A_1}$ for all $\gamma\in G(F)$. Moreover, we have $| p_i(\gamma)|_v \ll 1$ for $v | \infty$ by our assumption $d(\gamma, H_{v_0}) < C$. Now $p_i$ descends to a map $K_v / K(N)_v \rightarrow \cO_v / N \cO_v$ which is trivial on $K_{H,v}$. Thus if $\gamma_v \in K(N)_v K_{H,v}$ then $| p_i(\gamma) |_v \leqslant | N |_v$. If we then suppose that $\gamma_v \in K(N)_v K_{H,v}$ for all $v | N$ it would follow that $\prod_{v | N} | p_i(\gamma) |_v \leqslant N^{-|F:\Q|}$.  But if $N > C_2 \| \gamma \|_f^A$ for suitable $A, C_2 > 0$, we obtain a contradiction by again applying the product formula to $p_i(\gamma)\in F^\times$. This establishes \eqref{dioph2}.
\end{proof}

\begin{cor}
\label{fsmall}
There is $A>0$ such that the following holds. Let $N$, $S$, and $\xi$ be as in Section \ref{subsec:rtf}. Let $k_S\in \mathcal{H}_S$ satisfy $({\mathsf P}_S)$ and $k_\xi\in \mathcal{H}_\infty$ satisfy $({\mathsf P}_{\!\!\infty})$. Put $\phi={\bf 1}_N^S\otimes k_S\otimes k_\xi$. Then for all $\gamma \in G(F) - H(F)$ and all $x, y \in H(\A)$, we have
\[
\phi(x^{-1} \gamma y) \ll  \beta(\xi)(1 + \| \xi \|)^{-1/4} N^{-1/4} R^A \| k_S \|_\infty.
\]
\end{cor}

\begin{proof}
Let $\Omega_H \subset H(\A)$ be a compact set containing a fundamental domain for $[H]$.  We assume that $\Omega_H = \Omega_{H,D\infty} \times \prod_{v \nmid D\infty} K_{H,v}$ after possibly enlarging $D$.  Because $G(F) - H(F)$ is bi-invariant under $H(F)$, we may assume that $x, y \in \Omega_H$.

We may also clearly assume that $\phi(x^{-1} \gamma y) \neq 0$. It then follows from Property $({\mathsf P}_S)$ that $\| x^{-1} \gamma y \|_S \leqslant R$; in fact we have $\| x^{-1} \gamma y \|_f \leqslant R$, using the condition $x^{-1} \gamma y\in K^S(N)K_H^S\subset K^S$. When combined with $x, y \in \Omega_H$ this gives $\| \gamma \|_f \ll R$.  We may now apply part \eqref{dioph1} of Lemma \ref{dioph}, to find that $d(\gamma, H_{v_0}) \gg R^{-A}$. All together, since $x, y \in H_{v_0}$, we deduce that $d(x^{-1} \gamma y, H_{v_0}) \gg R^{-A}$. Similarly, from $k_\xi(x^{-1} \gamma y)\neq 0$ and Property $({\mathsf P}_{\!\!\infty})$ it follows that $d(x^{-1} \gamma y, H_{v_0}) < 1$, and hence $d(\gamma, H_{v_0}) \ll 1$.

Suppose that $1 + \|\xi \| \geqslant N$.  We have $(k_S\otimes k_\xi)(x^{-1} \gamma y) \leqslant \| k_S \|_\infty k_\xi(x^{-1} \gamma y)$. We then combine $({\mathsf P}_{\!\!\infty})$ with  $d(x^{-1} \gamma y, H_{v_0}) \gg R^{-A}$ to get
\begin{align*}
k_\xi(x^{-1} \gamma y) &  \ll \beta(\xi) (1 + \| \xi \| d(x^{-1} \gamma y, H_{v_0}) )^{-1/2}\\
& \ll \beta(\xi) (1 + \| \xi \| R^{-A} )^{-1/2} \\
& \ll   \beta(\xi) (1 + \| \xi \|)^{-1/2} R^{A/2} \\
& \ll   \beta(\xi) (1 + \| \xi \|)^{-1/4} N^{-1/4} R^{A/2},
\end{align*}
which completes the proof in this case.

Now suppose that $1 + \|\xi \| < N$.  Because $\| \gamma \|_f \ll R$, part \eqref{dioph2} of Lemma \ref{dioph} implies that there are $A, C > 0$ such that if $N > CR^A$, then there is a place $v | N$ for which $\gamma_v \notin K(N)_v K_{H,v}$.  Because $x, y \in \Omega_H$, we have $x, y \in K_{H,v}$, and so $x^{-1} \gamma y \notin K(N)_v K_{H,v}$.  It follows that if $N > CR^A$, then $\phi(x^{-1} \gamma y) = 0$.  We may rephrase this as saying that
\[
\phi(x^{-1} \gamma y) \leqslant \| k_S k_\xi\|_\infty N^{-1} C R^A\ll \| k_S\|_\infty \beta(\xi)N^{-1} R^A,
\]
and the bound $N^{-1} \leqslant N^{-1/2} (1 + \| \xi \|)^{-1/2}$ completes the proof.
\end{proof}

\section{The amplified trace formula}\label{sec:atf}

We now prove Theorem \ref{trace-guy}, our trace formula asymptotic with uniform error term. Throughout this section, we can and will relax our condition that $G_{v_0}$ is $\R$-almost simple to the condition that $G$ is $F$-almost simple.

Our proof relies crucially on recent work of Shin--Templier and Cluckers--Gordon--Halupczok \cite{ST} on bounding centralizer volumes and $p$-adic orbital integrals as well as work of Finis--Lapid \cite{FL} bounding intersection volumes of conjugacy classes with congruence subgroups. We must supply our own bounds on archimedean orbital integrals; these are proven in Section \ref{ROI}.  

\subsection{Canonical and Tamagawa measures}
\label{sec:Grossmeasures}

If $G$ is a general connected reductive group over $F$, Gross \cite[(1.5)]{Gr} attaches to $G$ an Artin--Tate motive
\bes
M_G = \bigoplus_{d \geqslant 1} M_{G,d}(1-d)
\ees
with coefficients in $\Q$.  Here $M_{G,d}$ is an Artin motive and $(1-d)$ denotes the Tate twist.  We let $\epsilon(M_G)$ be the $\epsilon$-factor of this motive, which is given by 

\bes
\epsilon(M_G) = |d_F|^{\dim G / 2} \prod_{d \geqslant 1} \mathcal{N}_{F/\Q}(\mathfrak{f}(M_{G,d}) )^{d-1/2},
\ees
where $\mathfrak{f}(M_{G,d})$ denotes the conductor of the Artin motive $M_{G,d}$ (see \cite[(9.8)]{Gr}).  We let $L( M_{G_v}^\vee(1))$ denote the $L$-function of the local motive $M_{G_v}^\vee(1)$, and $L(M_G^\vee(1))$ and $\Lambda(M_G^\vee(1))$ denote the finite and completed $L$-functions of $M_G^\vee(1)$. Then $L( M_{G_v}^\vee(1))$ is a positive real number, and $L(M_G^\vee(1))$ and $\Lambda(M_G^\vee(1))$ are finite if $Z(G)$ does not contain an $F$-split torus (see \cite[Proposition 9.4]{Gr}).  From now on we shall assume that $G$ satifies this condition.

In \cite[$\mathsection$11]{Gr} Gross defines a canonical measure $|\omega_{G_v}|$ on $G_v$ at any place of $F$.  We define $\mu_{G,v}^\text{can} = L( M_{G_v}^\vee(1)) \cdot | \omega_{G_v}|$ as in \cite{Gr}.  When $v$ is finite and $G$ is unramified at $v$, $\mu_{G,v}^\text{can}$ assigns volume 1 to a hyperspecial subgroup of $G(F_v)$, and so we can define the measure $\mu^\text{can}_G = \prod_v \mu^\text{can}_{G,v}$ on $G(\A)$.

Now let $\omega$ be a nonzero differential form of top degree on $G$ defined over $F$. For each $v$, one may associate with $\omega$ a Haar measure $|\omega|_v$ on $G(F_v)$. For almost all $v$, $L( M_{G_v}^\vee(1)) \cdot | \omega|_v$ assigns volume 1 to a hyperspecial subgroup of $G(F_v)$. Let $\mu^\text{Tam}_G$ be the Tamagawa measure on $G(\A)$, which is defined by
\[
\mu^\text{Tam}_G = \Lambda(M_G^\vee(1))^{-1} |d_F|^{-\dim G / 2}\bigotimes_v L(M_{G_v}^\vee(1)) |\omega|_v 
\]
(see \cite[(10.2)]{Gr}) and satisfies \cite[p. 629]{Kott2}
\bes
\mu^\text{Tam}_G(G(F)\backslash G(\A)) = | \pi_0( Z(\widehat{G})^{\text{Gal}(\overline{F}/F)} ) |  | \ker^1(F, Z(\widehat{G}) ) |^{-1}.
\ees
The comparison between $\mu_G^\text{can}$ and $\mu_G^\text{Tam}$ is given by \cite[Theorem 11.5]{Gr}, 
\be\label{cantam}
\frac{\mu^\textup{can}_G }{ \mu^\textup{Tam}_G } = \epsilon(M_G)\Lambda(M_G^\vee(1)).
\ee

\subsection{The trace formula}

We now assume that $G$ is anisotropic.  The trace formula is a distributional identity
\bes
I_\text{spec}(\phi, \mu^\text{can}_G) = I_\text{geom}(\phi, \mu^\text{can}_G),
\ees
for $\phi\in C_c^\infty(G(\A))$. More precisely,
\bes
I_\text{spec}(\phi, \mu^\text{can}_G) = \sum_\pi m(\pi) \tr( \pi(\phi) ),
\ees
where $\pi$ runs over all irreducible subrepresentations of $L^2(G(F)\backslash G(\A))$ occuring with multiplicity $m(\pi)$, and
\bes
I_\text{geom}(\phi, \mu^\text{can}_G) = \sum_{ \{ \gamma \} } \frac{\mu^\text{can}_{I_\gamma}(I_\gamma(F)\backslash I_\gamma(\A))}{ | G_\gamma(F) : I_\gamma(F)|} O_\gamma(\phi),
\ees
where $\{\gamma\}$ runs over all $G (F)$-conjugacy classes, $G_\gamma$ is the centraliser of $\gamma$ in $G$, $I_\gamma$ is the connected component of $G_\gamma$, and 
\[
O_\gamma(\phi)=\int_{I_\gamma(\A)\backslash G(\A)}\phi(x^{-1}\gamma x)d\mu_\gamma (x).
\]
(See e.g. \cite[Section 9.1]{Kott} for this formulation of the geometric side.)  The measure $\mu_\gamma$ above denotes the quotient measure $d\mu^\text{can}_G / d\mu^\text{can}_{I_\gamma}$.

We shall bound the terms in $I_\text{geom}(\phi, \mu^\text{can}_G)$ using the Weyl discriminant.  For any $v$ and $\gamma \in G_v$, this is defined by
\[
D_v(\gamma) = | \det(1 - \Ad(\gamma)_{|\g_v / \g_{v,\gamma}}) |_v,
\]
where $\g_{v,\gamma}$ denotes the centraliser of $\gamma$ in $\g_v$.  If $S$ is any set of places and $\gamma \in G(F)$, we define $D_S(\gamma) = \prod_{v \in S} D_v(\gamma)$ and $D^S(\gamma) = \prod_{v \notin S} D_v(\gamma)$.

\subsection{Bounding volumes}\label{sec:volumes}
We again let $G$ denote a group satisfying the conditions of Theorem \ref{thm1}. Throughout the rest of this section, $A, B$ and $C$ will denote sufficiently large positive constants that may vary from line to line, and will never depend on a choice of place of $F$.

In preparation for the following result, we introduce some additional notation. Given $\kappa\geqslant 0$ and a finite set of finite places $T$ away from $D$, we write $U_T^{\leqslant\kappa}$ for the open compact subset ${\rm supp}\, \mathcal{H}_T^{\leqslant\kappa}$. Furthermore, we denote by $\cC_T^{\leqslant\kappa}$ the set of $G(F)$-conjugacy classes of elements in $G(F)-Z(F)$ whose $G(\A)$-conjugacy classes have non empty intersection with $K^T\cdot U_T^{\leqslant\kappa}\cdot U_\infty$, where $U_\infty = \{g\in G(F_\infty): d(g,H_{v_0}) <1\}$.

\begin{prop}\label{vol-est} There exist $A,B>0$ such that for any $\kappa\geqslant 0$, any finite set of finite places $T$ away from $D$, and any $\{\gamma\}\in\cC_T^{\leqslant\kappa}$, we have
\[
\mu^\textnormal{can}_{I_\gamma}(I_\gamma(F)\backslash I_\gamma(\A)) \ll q_T^{A\kappa + B}.
\]
The implied constant depends only on $G$.
\end{prop}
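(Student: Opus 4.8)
The plan is to bound the canonical volume $\mu^{\textnormal{can}}_{I_\gamma}(I_\gamma(F)\backslash I_\gamma(\A))$ by first comparing it to the Tamagawa volume through the identity \eqref{cantam}, then reducing the resulting $\epsilon$- and $L$-factors to the absolute conductor of $I_\gamma$, and finally controlling that conductor by the Weyl discriminants $D_v(\gamma)$ together with the product formula. The essential arithmetic input — bounding the ramification of a centralizer in terms of the Weyl discriminant — will be imported from \cite{ST}.

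I would begin with two preliminary reductions. Since $G$ is anisotropic over $F$ it contains no nontrivial unipotent elements, so every $\gamma\in G(F)$ is semisimple and $I_\gamma$ is connected reductive; moreover $Z(I_\gamma)^\circ$ is an $F$-subtorus of the anisotropic group $G$, hence itself anisotropic, so Gross's canonical and Tamagawa measures on $I_\gamma$ are defined and $I_\gamma(F)\backslash I_\gamma(\A)$ has finite volume. Applying \eqref{cantam} to $I_\gamma$ gives
\[
\mu^{\textnormal{can}}_{I_\gamma}(I_\gamma(F)\backslash I_\gamma(\A)) = \epsilon(M_{I_\gamma})\,\Lambda(M_{I_\gamma}^\vee(1))\cdot\mu^{\textnormal{Tam}}_{I_\gamma}(I_\gamma(F)\backslash I_\gamma(\A)).
\]
Because $\gamma$ is semisimple, $I_\gamma$ contains a maximal torus of $G$ over $\overline F$, so its root datum is one of finitely many sub-data of that of $G$; the Tamagawa volume, equal to $|\pi_0(Z(\widehat{I_\gamma})^{\textnormal{Gal}})|\,|\ker^1(F,Z(\widehat{I_\gamma}))|^{-1}$, is therefore bounded by a constant depending only on $G$. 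It remains to bound $\epsilon(M_{I_\gamma})\Lambda(M_{I_\gamma}^\vee(1))$, and for this I would pass to $\mathfrak{N}(I_\gamma):=\prod_{d\geqslant1}\mathcal{N}_{F/\Q}(\mathfrak{f}(M_{I_\gamma,d}))$. By its very definition $\epsilon(M_{I_\gamma})$ is a bounded power of $\mathfrak{N}(I_\gamma)$ times the fixed factor $|d_F|^{\dim I_\gamma/2}$; and in $\Lambda(M_{I_\gamma}^\vee(1))=\prod_d\Lambda(M_{I_\gamma,d}^\vee,d)$ each factor with $d\geqslant2$ is evaluated in the region of absolute convergence of the underlying Artin $L$-function, so it is $\ll_G\mathfrak{N}(I_\gamma)^C$ (the conductor entering only through the archimedean part), while the $d=1$ factor is finite — the anisotropy of $Z(I_\gamma)^\circ$ means $M_{I_\gamma,1}$ has no trivial constituent, so the $L$-function is holomorphic at $s=1$ — and is $\ll_\epsilon\mathfrak{N}(I_\gamma)^\epsilon$ by the convexity bound for Artin $L$-functions on the line $\Real s=1$. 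Hence $\mu^{\textnormal{can}}_{I_\gamma}(I_\gamma(F)\backslash I_\gamma(\A))\ll_G\mathfrak{N}(I_\gamma)^C$, and the proposition is reduced to the estimate $\mathfrak{N}(I_\gamma)\ll_G q_T^{A\kappa+B}$.

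The remaining step is to control the ramification of $I_\gamma$ for a representative $\gamma$ whose $G(\A)$-conjugacy class meets $K^T U_T^{\leqslant\kappa}U_\infty$. The key input, taken from the analysis of centralizers of integral semisimple elements in \cite{ST}, is that at a finite place $v\nmid D$ with $\gamma\in K_v=G(\cO_v)$ the conductor exponent of each $M_{I_\gamma,d}$ at $v$ is $\ll_{\dim G}\max(0,\ord_v D_v(\gamma))$; in particular $I_\gamma$ is unramified at $v$ whenever $D_v(\gamma)$ is a unit, which occurs for all but finitely many $v$, and at the finitely many $v\mid D$ one loses only an additive constant. This yields $\mathfrak{N}(I_\gamma)\ll_G q_D^{C}\prod_{v\nmid\infty}\max(1,D_v(\gamma)^{-1})^{C}$. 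The product over finite places is then estimated with the product formula applied to $\det(1-\Ad(\gamma)\mid\g/\g_\gamma)\in F^\times$, whose local absolute values are the $D_v(\gamma)$: since $\gamma$ is integral at every finite $v\notin T\cup\{v\mid D\}$ one has $D_v(\gamma)\leqslant1$ there, so $\prod_{v:\,D_v(\gamma)<1}D_v(\gamma)^{-1}=\prod_{v:\,D_v(\gamma)\geqslant1}D_v(\gamma)$ is a product over a subset of $T\cup\{v\mid D\}\cup\{v\mid\infty\}$. The archimedean contribution is bounded by a constant depending only on $G$, because $\gamma$ lies in a bounded region at $v_0$ and in a compact group at the remaining infinite places; at $v\in T$ the membership $\gamma\in K_v\mu(\varpi_v)K_v$ with $\|\mu\|_v\leqslant\kappa$ forces $\rho(\gamma)$ and $\rho(\gamma)^{-1}$ to have matrix entries of $v$-adic absolute value $\leqslant q_v^{C\kappa}$, whence $D_v(\gamma)\leqslant q_v^{C\kappa}$; and the places $v\mid D$ contribute at most $q_D^{C}$. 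Combining these gives $\prod_{v\nmid\infty}\max(1,D_v(\gamma)^{-1})\ll_G q_T^{C\kappa}$ and therefore $\mathfrak{N}(I_\gamma)\ll_G q_T^{A\kappa+B}$, completing the argument.

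The part I expect to be the main obstacle is the structural bound imported from \cite{ST}: that the conductor exponent of $I_\gamma$ at a place of good reduction is controlled uniformly by $\ord_v D_v(\gamma)$, equivalently that the splitting field of $I_\gamma$ is ramified at $v$ only to an extent governed by the valuation of the Weyl discriminant. Once that is granted, everything else is routine: manipulations with the product formula, the elementary size estimate for $D_v(\gamma)$ on the support of a truncated Hecke operator, and standard bounds for Artin $L$-values near $s=1$ and for Tamagawa numbers of reductive groups, all resting on \cite{Gr,Kott2,ST}.
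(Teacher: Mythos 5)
Your proposal follows essentially the same route as the paper: compare canonical and Tamagawa measures via \eqref{cantam}, bound the Tamagawa volume, bound $\epsilon(M_{I_\gamma})\Lambda(M_{I_\gamma}^\vee(1))$ in terms of the ramification of $I_\gamma$, relate ramification to the local Weyl discriminants, and close with the product formula using the bounds $D_v(\gamma)\leqslant q_v^{A\kappa+B}$ on $T$, $D_v(\gamma)\ll 1$ at infinity, and $D_v(\gamma)\leqslant 1$ elsewhere. The one deviation worth noting is in how you handle the $\epsilon L$ estimate: the paper cites \cite[Cor.\ 6.16]{ST} directly, combined with $\mathrm{Ram}(I_\gamma)\subset S_D\cup S_\gamma\cup T$ from Kottwitz, whereas you re-derive it from a per-place conductor-exponent bound $\ll\max(0,\mathrm{ord}_v D_v(\gamma))$ together with a convexity estimate for the $d=1$ Artin $L$-factor at $s=1$; your per-place statement is stronger than what \cite{ST} literally proves (and your Artin $L$-bound at the edge of the critical strip would itself require a citation), so if you wanted to make this fully rigorous you should either locate the precise statement or fall back on the cruder but sufficient input that each ramified place contributes a uniformly bounded power of $q_v$.
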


\begin{proof}
Let $S_D$ denote the set of places dividing $D$. Put $S_\gamma = \{ v \notin S_D\cup \infty \cup T : D_v(\gamma) \neq 1 \}$. We begin by noting that for any $\{\gamma\} \in \cC_T^{\leqslant\kappa}$ we have
\be\label{Igammavol}
\mu^\textup{can}_{I_\gamma}( I_\gamma(F)\backslash I_\gamma(\A)) \ll q_{S_\gamma}^B q_T^B,
\ee
where the implied constant depends only on $G$. Indeed, from the proof of \cite[Corollary 6.16]{ST} we have
\bes
\epsilon(M_{I_\gamma})L(M_{I_\gamma}^\vee(1)) \ll \prod_{v\in\text{Ram}(I_\gamma)}q_v^B\ll q_{S_\gamma}^B q_T^B,
\ees
where $\text{Ram}(I_\gamma)$ is the set of finite places where $I_\gamma$ is ramified. Note that the last bound follows from the inclusion $\text{Ram}(I_\gamma) \subset S_D\cup S_\gamma \cup T$, which follows from \cite[Proposition 7.1]{Kott}. Moreover, from the definition of the local archimedean factors in (7.1) and (7.2) of \cite{Gr}, combined with \cite[Proposition 6.3]{ST}, we find that $L_\infty(M_{I_\gamma}^\vee (1))\ll 1$, the implied constant depending only on $G$. Finally, Corollary 8.12 and Lemma 8.13 of \cite{ST} imply $\mu^\textup{Tam}_{I_\gamma}(I_\gamma(F)\backslash I_\gamma(\A))\ll 1$. By combining these estimates with \eqref{cantam} we obtain \eqref{Igammavol}. 

 Now for $\{\gamma\}\in\mathcal{C}^{\leqslant\kappa}_T$ we have
\[
D_v(\gamma)\leqslant
\begin{cases}
 q_v^{A\kappa + B},&\text{ for } v \in T,\\
C,&\text{ for } v\mid \infty,\\
 1,&\text{ for } v \notin T\cup \infty.
\end{cases}
\]
From this and the product formula we deduce that
\begin{equation}\label{prod-form}
1=\prod_{v\in T}D_v(\gamma)\prod_{v\in S_D\cup\infty}D_v(\gamma)\prod_{v\in S_\gamma}D_v(\gamma)\ll q_T^{A\kappa +B}q_{S_\gamma}^{-1},
\end{equation}
since $D_v(\gamma)\leqslant q_v^{-1}$ for every $v\in S_\gamma$. Inserting this into \eqref{Igammavol} gives the proposition.
\end{proof}

\subsection{Bounding adelic orbital integrals}\label{adelicOI}

We shall prove Theorem \ref{trace-guy} by inserting test functions of the form $\phi = {\bf 1}_N^T\otimes k_T\otimes k_\xi$ into the trace formula, where $k_T \in \cH_T$, $k_\xi \in \cH_\infty$, and ${\bf 1}_N^T$ was defined in \eqref{def-eps}.  We now estimate the orbital integrals of these functions.

\begin{prop}\label{propOI}
There are constants $A,B,\delta,\eta >0$ such that the following holds. Let $T$ be a finite set of finite places away from $N$ and $D'$. Let $\xi\in\mathfrak{a}^*$ and $\kappa\geqslant 0$. Then for $k_T\in\mathcal{H}_T^{\leqslant \kappa}$, any $k_\xi\in\mathcal{H}_\infty$ satisfying $({\mathsf P}_\infty)$, and any $\gamma\in G(F)-Z(F)$, we have
\[
O_\gamma({\bf 1}_N^T\otimes k_T\otimes k_\xi) \ll  N^{-\delta}\beta(\xi) (1 + \|\xi\|)^{-\eta} q_T^{A\kappa + B}\| k_T\|_\infty,
\]
where the implied constant depends only on $G$, $\theta$, and $K$.
\end{prop}

\smallskip

\noindent {\it Proof.} We may write $k_T$ as a linear combination of at most $q_T^{A\kappa + B}$ terms of the form $\otimes_{v \in T} 1_{K_v \mu_v(\varpi_v) K_v} \in \cH_T^{\leqslant \kappa}$, whose sup norms are all bounded by $\| k_T\|_\infty$.  We may therefore assume that $k_T$ is a multiple of $\otimes_{v \in T} 1_{K_v \mu_v(\varpi_v) K_v}$.  This assumption implies that the orbital integral factorizes as $\prod_vO_\gamma(\phi_v)$, where for any $\gamma_v\in G(F_v)$ we have
\[
O_{\gamma_v}(\phi_v)=\int_{I_{\gamma_v}(F_v)\backslash G(F_v)}\phi_v(x_v^{-1}\gamma_v x_v)d\mu_{\gamma,v}(x_v)
\]
and $\mu_{\gamma,v} = \mu^\text{can}_{G,v} / \mu^\text{can}_{I_{\gamma_v},v}$. It therefore suffices to work place by place.

In \cite[Theorem 14.1]{ST} it is shown that, if we choose $D'$ to be a sufficiently large (ineffective) multiple of $D$ depending only on $G$, then
\be\label{OIforT}
|O_\gamma (k_T)| \leqslant q_T^{A\kappa+B} D_T(\gamma)^{-1/2}\| k_T\|_\infty.
\ee
We may prove the following bound for the integral at infinity using the results of Section \ref{ROI}.

\begin{lemma}\label{lemma-real}
For any $0 < \eta < 1/2$, we have the bound
\[
O_\gamma(k_\xi)\ll \beta(\xi) (1 + \|\xi\|)^{-\eta}D_\infty(\gamma)^{-3/4}.
\]
\end{lemma}

\begin{proof}
Let $G_{\rm cpt}$ and $G_{{\rm cpt},v_0}$ be the groups associated with $G_\infty$ and $G_{v_0}$ by Definition \ref{def:Gcpt}. We begin by showing that, as a consequence of Proposition \ref{blowupint}, the following statement holds. Let $0 < \eta < 1/2$, and let $k_\xi\in\mathcal{H}_\infty$ satisfy property $(\mathsf{P}_\infty)$. Then
\be\label{deduce-O-phi}
O_\gamma(k_\xi)\ll \beta(\xi) (1 + \|\xi\|)^{-\eta}D_\infty(\gamma)^{-3/4}
\ee
for every semisimple $\gamma\in G_\infty- G_{\rm cpt}$.

To see how \eqref{deduce-O-phi} follows from Proposition \ref{blowupint}, first note that for any non-negative $f\in C_c^\infty(G_\infty^0)$ such that $f =1$ on the support of $k_\xi$, we have, using property $(\mathsf{P}_\infty)$, 
\begin{align*}
k_\xi(g) & \ll \beta(\xi) ( 1 + \|\xi\| d(g,H_{v_0}))^{-1/2} f(g) \\
& \leqslant \beta(\xi) ( 1 + \|\xi\| d(g,H_{v_0}))^{-\eta} f(g) \\
& \ll \beta(\xi) (1 + \| \xi \|)^{-\eta} d(g,H_{v_0})^{-\eta} f(g).
\end{align*}
Thus
\[
O_\gamma(k_\xi)\ll \beta(\xi) (1 + \|\xi\|)^{-\eta} O_\gamma(f d( \cdot ,H_{v_0})^{-\eta}),
\]
to which we may apply \eqref{prove-O-f}. Indeed, since $Z(G)$ is finite, the function $\| X(g) \|_0$ used there satisfies $\| X(g) \|_0 \ll d(g,H_{v_0})$.

The proof of Lemma \ref{lemma-real} then follows from \eqref{deduce-O-phi} once we have verified that a non-central element $\gamma\in G(F)$ cannot lie in $G_{\rm cpt}$.  Because $G_\infty = \prod_{v | \infty} G_v$, we have $G_{\rm cpt} = G_{{\rm cpt},v_0} \times \prod_{v \neq v_0} G_v$, and so it suffices to verify that $\gamma_{v_0} \notin G_{{\rm cpt},v_0}$.

In the case at hand, $G_{{\rm cpt},v_0}$ is a normal subgroup of $G_{v_0}$ which, as $G$ is semisimple, is compact. If we let $H^+$ be the fixed point set of $\theta$ in $G$ (we write $H^+$ to distinguish it from its identity component $H$), then $H^+_{v_0}$ is a maximal compact subgroup of $G_{v_0}$, and so we have $G_{{\rm cpt},v_0} \subset g H^+_{v_0} g^{-1}$ for all $g \in G_{v_0}$.  Thus, if $\gamma_{v_0} \in G_{{\rm cpt},v_0}$, we have $\gamma \in g H^+(F) g^{-1}$ for all $g \in G(F)$.  The group
\[
\bigcap_{g \in G(F)} g H^+ g^{-1}
\]
is a proper $F$-subgroup of $G$; it is normal, since its normalizer contains the Zariski-dense set $G(F)$.  Our assumption that $G$ is $F$-almost simple implies that it must be contained in $Z(G)$.  We therefore have $\gamma \in Z(F)$, a contradiction.
\end{proof}

It remains then to address the size of the orbital integral at finite places away from $T$. This is provided by the following result. We retain the set-up from the statement of Proposition \ref{propOI}; in particular, we recall the notation ${\bf 1}_N^T$ from \eqref{def-eps}.

\begin{lemma}\label{OInotT}
There are constants $\delta, A,B,C>0$ such that for $\{\gamma\}\in \mathcal{C}_T^{\leqslant\kappa}$ we have
\begin{equation}\label{not-T}
O_\gamma({\bf 1}_N^T) \ll N^{-\delta}q_T^{A\kappa + B} D^{T\infty}(\gamma)^{-C}.
\end{equation}
\end{lemma}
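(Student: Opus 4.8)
The plan is to factorise $O_\gamma({\bf 1}_N^T)$ over places, to extract a genuine power-of-$N$ saving from the places dividing $N$, and to dispose of the remaining finite places by the volume and $p$-adic orbital integral bounds already available. Since $G$ is $F$-anisotropic every $\gamma\in G(F)$ is semisimple, and as $\gamma\notin Z(F)$ the localisation $\gamma_v$ is noncentral at every place (membership in the $F$-subgroup $Z(G)$ is detected over any completion). Because ${\bf 1}_N^T={\bf 1}_{K^T(N)K_H^T}$ is a product of local functions and all the relevant measures are products, writing $n_v=v(N)$ one has
\[
O_\gamma({\bf 1}_N^T)=\Big(\prod_{v\nmid T\infty}O_{\gamma_v}({\bf 1}_{K_v})\Big)\cdot\prod_{v\mid N}\frac{O_{\gamma_v}\big({\bf 1}_{K_v(\varpi_v^{n_v})K_{H,v}}\big)}{O_{\gamma_v}({\bf 1}_{K_v})},
\]
with the convention that if $O_{\gamma_v}({\bf 1}_{K_v})=0$ for some $v\mid N$ then, since ${\bf 1}_{K_v(\varpi_v^{n_v})K_{H,v}}\leqslant{\bf 1}_{K_v}$, the whole expression vanishes and there is nothing to prove. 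The first product converges because $I_\gamma\subset G$ is again $F$-anisotropic.

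The first, ``tame'', product I would bound by $q_T^{A\kappa+B}D^{T\infty}(\gamma)^{-C}$. Writing $S_D$ for the places above $D$ and $S_\gamma$ as in the proof of Proposition \ref{vol-est}, one uses the local estimates of Shin--Templier \cite[Thm 7.3]{ST} at the finitely many $v\in S_\gamma\cup S_D$ (which contribute $D_v(\gamma)^{-C}$, together with a factor $q_v^B$ at $v\in S_\gamma$), and the standard fact that the product of the remaining, genuinely unramified, local orbital integrals equals a bounded multiple of $\mu^{\rm can}_{I_\gamma}(I_\gamma(F)\backslash I_\gamma(\A))$. By Proposition \ref{vol-est} this volume is $\ll q_T^{A\kappa+B}$, and the bound $q_{S_\gamma}\ll q_T^{A\kappa+B}$ read off from its proof (via the product formula for the Weyl discriminant) absorbs the $q_v^B$ factors, giving $\prod_{v\nmid T\infty}O_{\gamma_v}({\bf 1}_{K_v})\ll q_T^{A\kappa+B}D^{T\infty}(\gamma)^{-C}$.

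The heart of the matter is the local saving: for each $v\mid N$, with residue cardinality $q$, uniformiser $\varpi$ and $n=n_v$, there should be an absolute $\delta>0$ with
\[
O_{\gamma_v}\big({\bf 1}_{K_v(\varpi^n)K_{H,v}}\big)\ll q^{-\delta n}\,q^{B\cdot{\bf 1}[v\in S_\gamma]}\,O_{\gamma_v}({\bf 1}_{K_v}).
\]
To prove this, note that $K_v(\varpi^n)K_{H,v}$ is the preimage of $H(\cO_v/\varpi^n)$ in $K_v=G(\cO_v)$ under reduction mod $\varpi^n$ (using $G(\cO_v)\cap H_v=H(\cO_v)$ for $v\nmid D$), so the left side is the mass of those $x$ with $x^{-1}\gamma_v x\in K_v$ whose reduction lands in $H(\cO_v/\varpi^n)$. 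Partitioning $\{x:x^{-1}\gamma_v x\in K_v\}$ according to the reduction mod $\varpi^n$ of $x^{-1}\gamma_v x$ — each nonempty fibre being, modulo $I_{\gamma_v}(F_v)$, a translate of the preimage in $K_v$ of the finite centraliser of that reduction, hence of mass depending only on the conjugacy class mod $\varpi^n$ — reduces the ratio $O_{\gamma_v}({\bf 1}_{K_v(\varpi^n)K_{H,v}})/O_{\gamma_v}({\bf 1}_{K_v})$ to the ratio of point counts
\[
\frac{\#\big((\mathcal O_\gamma\cap H)(\cO_v/\varpi^n)\big)}{\#\big(\mathcal O_\gamma(\cO_v/\varpi^n)\big)},
\]
where $\mathcal O_\gamma=G/I_\gamma$ is the closed, irreducible, smooth conjugacy class of the semisimple element $\gamma$. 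The key geometric input is that $\mathcal O_\gamma\not\subseteq H$: were it contained in $H$, every $G(F)$-conjugate of $\gamma$ would lie in $H$, forcing $\gamma\in\bigcap_{g\in G(F)}gH^+g^{-1}$, a normal $F$-subgroup, proper since $\dim H<\dim G$, and hence contained in $Z(G)$ by $F$-almost simplicity — exactly the argument of Lemma \ref{lemma-real} — contradicting $\gamma\notin Z(F)$. Thus $\mathcal O_\gamma\cap H$ has dimension strictly smaller than $\dim\mathcal O_\gamma$, and a point count over $\cO_v/\varpi^n$ (reducing to the special fibre by smoothness of $\mathcal O_\gamma$ and Lang--Weil) bounds the displayed ratio by $q^{-n}$ times a constant controlled by the degrees of $\mathcal O_\gamma$ and $\mathcal O_\gamma\cap H$; for $v\notin S_\gamma$ that constant is absolute, while for $v\in S_\gamma$ it, as well as any discrepancy in the ``equal mass'' step at a place of bad reduction, is absorbed into $q^B$. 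The uniformity of all of this over the infinitely many $\gamma$ — that a single $\delta$ works and the constants do not degenerate as $\gamma$ approaches the centre — is where the estimates of Finis--Lapid \cite{FL} on intersections of conjugacy classes with congruence subgroups, together with \cite[Thm 14.1]{ST}, are invoked.

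Assembling the pieces: the product over $v\mid N$ of the local bound contributes $\prod_{v\mid N}q_v^{-\delta n_v}=N^{-\delta}$, the factors $q_v^{B\cdot{\bf 1}[v\in S_\gamma]}$ contribute at most $q_{S_\gamma}^B\ll q_T^{A\kappa+B}$, and multiplying by the tame bound gives $O_\gamma({\bf 1}_N^T)\ll N^{-\delta}q_T^{A\kappa+B}D^{T\infty}(\gamma)^{-C}$, which is \eqref{not-T}. The main obstacle is precisely the uniformity highlighted above: the naive point count produces implied constants depending on $\gamma$ (degrees and singularities of $\mathcal O_\gamma\cap H$, behaviour at ramified places and at elements close to the centre), and ruling out any erosion of the exponential saving uniformly over all noncentral $\gamma$ and all $v\mid N$ is the delicate technical input supplied by the cited work of Finis--Lapid and Shin--Templier.
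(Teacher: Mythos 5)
Your factorisation over places, the handling of the ``tame'' product via Proposition~\ref{vol-est}, Shin--Templier, and the product formula, and the identification of the key geometric fact that $\mathcal O_\gamma \not\subseteq H$ (via $F$-almost simplicity, as in Lemma~\ref{lemma-real}) are all sound and correspond to the paper's overall structure. However, the route you propose for the crucial local saving at $v\mid N$ has a genuine gap.

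The step you describe as ``reducing the ratio\ldots to the ratio of point counts'' confuses two different quantities. What the orbital integral actually requires is the measure of the set of $\delta\in\mathcal O_\gamma(\cO_v)$ whose reduction mod $\varpi^n$ lands in $H(\cO_v/\varpi^n)$; this is \emph{not} controlled by the dimension of the reduced intersection $\mathcal O_\gamma\cap H$ together with Lang--Weil, because $\mathcal O_\gamma$ and $H$ can meet non-transversally. When $\gamma$ is ``$\varpi$-adically close'' to being $H$-conjugate --- a phenomenon that is quantified by the Finis--Lapid invariant $\lambda_p(\gamma)$ rather than by $S_\gamma$ or by degrees of defining polynomials --- the locus of points of $\mathcal O_\gamma$ lying $\varpi^n$-close to $H$ can be far thicker than $q^{-n(d-d_H)}$, even though the reduced intersection is lower-dimensional. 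Correspondingly, your proposed local bound $q^{-\delta n}q^{B\cdot\mathbf 1[v\in S_\gamma]}O_{\gamma_v}(\mathbf 1_{K_v})$ is not tenable: a constant factor $q^B$ at bad places cannot replace the necessary dichotomy between the regimes $D_p(\gamma)>N_p^{-\epsilon}$ (where the $N_p^{-\delta}$ saving holds) and $D_p(\gamma)\leqslant N_p^{-\epsilon}$ (where the saving is traded for a factor $D_p(\gamma)^{-C}$, later absorbed into $D^{T\infty}(\gamma)^{-C}$ by the product formula). Your bound, applied at a place with $D_p(\gamma)$ very small, would assert a saving that simply isn't there.

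Finally, describing Finis--Lapid's contribution as supplying ``uniform point counting'' misrepresents their input. Their Propositions 5.10 and 5.11 bound the conjugation-count function $\phi_{K'_p}(x)$ in terms of $\lambda_p(x)$, a lattice-theoretic invariant measuring how close $\Ad(x)-1$ comes to annihilating some ideal of $\mathfrak g_p$ modulo $p^n$; the paper then proves separately (Lemma~\ref{Adnontriv} and the last lemma of \S\ref{sec:F-L}) that $D_p(\gamma)>p^{-\epsilon n_p}$ forces $\lambda_p(\gamma)<\epsilon n_p$, which is where $F$-almost simplicity enters. This is not a geometric count of $\cO_v/\varpi^n$-points on a subscheme, and the passage from $F$ to $\Q$ via restriction of scalars (since Finis--Lapid work with $\Q$-groups) is also required and missing from your sketch. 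The paper also uses the elementary decomposition of $\operatorname{Ad}^{-1}(K_p)$ into finitely many right $K_p$-orbits $\coprod \bar g_i K_p$ to normalise against $O_\gamma(\mathbf 1_{K_p})$, which replaces your heuristic ``equal-mass fibre'' argument.
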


\begin{proof}
Recall the sets $S_D$ and $S_\gamma$ from the proof of Proposition \ref{vol-est}. Let $S_N$ denote the set of places dividing $N$.

\medskip

\noindent $\bullet$ If $v \notin S_D \cup S_N \cup S_\gamma \cup T \cup \infty$, then $K_v(N)K_{H_v}=K_v$ and we have $O_\gamma({\bf 1}_{K_v}) = 1$; see, for example, \cite[Corollary 7.3]{Kott}.

\medskip

\noindent $\bullet$ If $v \in S_D$, then $K_v(N)K_{H_v}=K_vK_{H_v}$ and a general bound of Kottwitz \cite[Theorem 13.1]{ST} establishes that $O_\gamma({\bf 1}_{K_vK_{H_v}}) \ll_v D_v(\gamma)^{-1/2}$.

\medskip

\noindent $\bullet$ If $v \in S_N\cup S_\gamma$ we argue as follows.

We begin by estimating the orbital integrals at places $v\in S_N$, for which we will use as a critical input the work of Finis--Lapid \cite{FL}. As the setting of \cite{FL} is that of $\Q$-groups, we shall need to restrict scalars from $F$ to $\Q$ to properly invoke their results. We thus let $p$ denote the rational prime over which $v$ lies, and we note that all places lying over $p$ belong to $S_N$. We set $K_p=\prod_{v\mid p}K_v$, and $K_p(N) K_{H,p}=\prod_{v\mid p}K_v(N) K_{H,v}$.  Factorize $N=\prod_{p|N} p^{n_p}$ and put $N_p=p^{n_p}$.

Let $\mu_{G,p}^{\rm can}$ (resp., $\mu_{I_\gamma,p}^\text{can}$) be the product measure on $G_p=\prod_{v\mid p}G_v$ (resp., $I_{\gamma,p}=\prod_{v\mid p}I_{\gamma,v}$), and let $\mu_{\gamma,p}$ be the quotient measure.  Let $\Ad : I_{\gamma,p} \backslash G_p \to G_p$ be the map $g \mapsto g^{-1} \gamma g$, which is a proper map as $\gamma$ is semisimple. By definition,
\[
O_\gamma({\bf 1}_{K_p(N)K_{H_p}})=\mu_{\gamma,p} (\Ad^{-1}( K_p(N)K_{H_p}) ).
\]
As $\Ad^{-1}(K_p)$ is compact and right-invariant under $K_p$, we may break it up into a finite number of orbits $\coprod \overline{g}_i K_p$, where $g_i \in G_p$ and $\overline{g}_i$ denotes its image in $I_{\gamma,p} \backslash G_p$.  This gives
\[
O_\gamma({\bf 1}_{K_p(N)K_{H_p}})=\sum_i \mu_{\gamma,p}(\overline{g}_i K_p \cap \Ad^{-1}( K_p(N)K_{H_p}) ).
\]
Let $g \in G_p$ be such that $x := g^{-1} \gamma g \in K_p$.  The fibers of the reduction map $g K_p \to \overline{g} K_p$ are the left orbits of $I_{\gamma,p}\cap g K_p g^{-1}$, so for any $K_p' \subset K_p$ we have
\begin{align*}
\mu_{\gamma,p}(\overline{g} K_p \cap \Ad^{-1}( K_p') ) & = \frac{\mu_{G,p}^{\rm can}(h \in gK_p: h^{-1} \gamma h \in K_p')}{\mu_{I_\gamma,p}^{\rm can}(I_{\gamma,p}\cap g K_p g^{-1})} \\
& = \frac{\mu_{G,p}^{\rm can}(k\in K_p: k^{-1} x k\in K_p')}{\mu_{I_\gamma,p}^{\rm can}(I_{\gamma,p}\cap g K_p g^{-1})}.
\end{align*}
Taking the quotient of these with $K_p'$ equal to $K_p$ and $K_p(N)K_{H_p}$, and using the fact that $\mu_{G,p}^{\rm can}(K_p) = 1$ for all $v \notin S_D \cup \infty$, gives
\[
\frac{\mu_{\gamma,p}(\overline{g} K_p \cap \Ad^{-1}( K_p(N)K_{H_p}) )}{\mu_{\gamma,p}( \overline{g} K_p )}=\mu_{G,p}^{\rm can}(k\in K_p: k^{-1} x k\in K_p(N)K_{H_p}).
\]

One can deduce from  Propositions 5.10 and 5.11 in \cite{FL} (see Section \ref{sec:F-L} below) that there are constants $\epsilon, \delta>0$ (independent of $p$) such that if $x\in K_p$ is conjugate to an element of $G(F) - Z(F)$ and satisfies $D_p(x) > N_p^{-\epsilon}$ one has
\be\label{FL-bound}
\mu_{G,p}^{\rm can}(k\in K_p: k^{-1}xk\in K_p(N) K_{H,p}) \ll N_p^{-\delta}.
\ee
(In particular, the implied constant is independent of $p$.)  On the other hand $\sum_i \mu_{\gamma,p}(\overline{g}_i K_p) = \mu_{\gamma,p}(\Ad^{-1}(K_p) ) =O_\gamma ({\bf 1}_{K_p})$. Noting that $D_p(\gamma) = D_p(g_i^{-1} \gamma g_i)$ for all $i$, we deduce that for $D_p(\gamma)>N_p^{-\epsilon}$ we have
\be
\label{FL-bound2}
O_\gamma({\bf 1}_{K_p(N)K_{H_p}}) \ll N_p^{-\delta} \sum_i \mu_{\gamma,p}( \overline{g}_i K_p ) = N_p^{-\delta} O_\gamma ({\bf 1}_{K_p}).
\ee
In the remaining range $D_p(\gamma) \leqslant  N_p^{-\epsilon}$, we have
\[
O_\gamma({\bf 1}_{K_p(N)K_{H_p}}) \leqslant O_\gamma({\bf 1}_{K_p}) \leqslant N_p^{-\delta} D_p(\gamma)^{-\delta/\epsilon}O_\gamma({\bf 1}_{K_p}).
\]
Since $D_v(\gamma)\leqslant 1$ for all $v\notin T\cup\infty$, we may combine these as
\[
\prod_{v\in S_N}O_\gamma({\bf 1}_{K_v(N)K_{H_v}}) \ll N^{-\delta} D_{S_N}(\gamma)^{-C} \prod_{v\in S_N}O_\gamma({\bf 1}_{K_v}).
\]
Note that we may shrink $\delta$ to absorb the implied constant in \eqref{FL-bound2} for $p$ sufficiently large, so that the implied constant above only depends on $G$ and not the number of factors of $N$.

We now return to the product of orbital integrals over all $v\in S_N\cup S_\gamma$.  Recalling that $O_\gamma({\bf 1}_{K_v})=1$ for $v\notin S_\gamma$, we have just shown
\begin{align*}
\prod_{v\in S_N}O_\gamma({\bf 1}_{K_v(N)K_{H_v}})\prod_{\substack{v \in S_\gamma\\ v\notin S_N}} O_\gamma({\bf 1}_{K_v}) & \ll N^{-\delta}D_{S_N}(\gamma)^{-C} \prod_{v\in S_N}O_\gamma({\bf 1}_{K_v}) \prod_{\substack{v \in S_\gamma\\ v\notin S_N}} O_\gamma({\bf 1}_{K_v}) \\
& = N^{-\delta}D_{S_N}(\gamma)^{-C}\prod_{v\in S_\gamma}O_\gamma({\bf 1}_{K_v}).
\end{align*}
For $v \in S_\gamma$, we proceed as follows.  If $v \nmid D'$ we again apply \cite[Theorem 14.1]{ST} to get $O_\gamma({\bf 1}_{K_v}) \leqslant q_v^B D_v(\gamma)^{-1/2}$, and if $v \mid D'$ we apply \cite[Theorem 13.1]{ST} to get $O_\gamma({\bf 1}_{K_v}) \ll_v D_v(\gamma)^{-1/2}$. Combining these gives
\bes
\prod_{v\in S_\gamma}O_\gamma({\bf 1}_{K_v}) \leqslant q_{S_\gamma}^B D_{S_\gamma}(\gamma)^{-C}.
\ees
Since $\{\gamma\}\in\mathcal{C}^{\leqslant\kappa}_T$ we may invoke \eqref{prod-form} to obtain
\bes
\prod_{v\in S_\gamma}O_\gamma({\bf 1}_{K_v}) \leqslant q_{S_\gamma}^B D_{S_\gamma}(\gamma)^{-C}  \ll q_T^{A\kappa + B} D_{S_\gamma}(\gamma)^{-C}.
\ees
Putting these estimates together completes the proof of Lemma \ref{OInotT}.
\end{proof}

\begin{remark}\label{rmk:F-L}
As the authors point out in \cite[Remark 7.4]{ST}, the bound \cite[Theorem 14.1]{ST} is uniform in the place $v \nmid D'$ whereas the bound \cite[Theorem 13.1]{ST} of Kottwitz applies to $v\mid D'$ but is not uniform in $v$. As we allow the implied constant in Proposition \ref{propOI} to depend on the group, this non-uniformity is not an issue.
\end{remark}

Taken together (and using the product rule for the product of Weyl discriminants), the estimates in \eqref{OIforT}, Lemma \ref{lemma-real}, and Lemma \ref{OInotT} imply Proposition \ref{propOI}.\qed

\subsection{The work of Finis--Lapid}\label{sec:F-L}
We now explain how to extract from Propositions 5.10 and 5.11 of \cite{FL} the bound we stated in \eqref{FL-bound}. Recall the notation $G_p=\prod_{v\mid p}G_v$, $K_p=\prod_{v\mid p}K_v$, and so on, from the proof of Lemma \ref{OInotT}. In what follows, we simplify notation by writing $K_p'$ for $K_p(N) K_{H,p}$. We recall that $N=\prod_{p\mid N}p^{n_p}$ and that $x$ is taken to lie in $K_p$.

We first remark that we may assume that $x$ lies in $K_p'$, for if there is no such representative then the left-hand side of \eqref{FL-bound} is zero. We then have
\[
\mu_{G,p}^{\rm can}(k\in K_p: k^{-1}xk\in K_p')=\mu_{G,p}^{\rm can}(k\in K_p: [k,x]\in K_p'),
\]
where $[k,x] := k x k^{-1} x^{-1}$ is the commutator.  The bounds of Finis--Lapid are stated in terms of the adjoint group $G^\text{ad}$. They choose an embedding $\rho_\Q^\text{ad} : \text{Res}_{F/\Q} G^\text{ad} \to\GL(N^\text{ad})$, and for every $p$ define $K_p^\text{ad} = (\rho_\Q^\text{ad})^{-1}(\GL( N^\text{ad}, \Z_p ) )$. In \cite[Definition 5.1]{FL}, they define the function
\[
\phi_{K_p'}(x)={\rm vol}(k\in K_p^{\rm ad}: [k,x]\in K_p')
\]
for $x\in K_p^{\rm ad}$, where ${\rm vol}$ is the probability Haar measure on $K_p^{\rm ad}$.\footnote{Note that Finis--Lapid consider the commutator as a map $G^\text{ad} \times G^\text{ad} \to G$ in this definition.} We first relate the two quantities.
\begin{lemma}
Let $\pi:G \rightarrow G^{\rm ad}$ be the natural projection map. Then for any $x\in K_p$,
\[
\mu_{G,p}^{\rm can}(k\in K_p: [k,x]\in K_p') \ll \phi_{K_p'}(\pi(x)),
\]
the implied constant depending only on $G$.\end{lemma}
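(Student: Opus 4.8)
The plan is to exploit the fact that the commutator $[k,x]=kxk^{-1}x^{-1}$ is unchanged when $k$ or $x$ is multiplied by a central element. Concretely, the commutator morphism $G\times G\to G$ is invariant under translation by $Z\times Z$ and hence descends to a morphism $G^{\mathrm{ad}}\times G^{\mathrm{ad}}\to G$ -- this is precisely the map used in the definition of $\phi_{K_p'}$ -- which satisfies $[k,x]=[\pi(k),\pi(x)]$ for all $k,x\in G_p$. First I would use this to match up the two relevant sets: writing $\Omega=\{k\in K_p:[k,x]\in K_p'\}$ and $\Omega^{\mathrm{ad}}=\{\bar k\in K_p^{\mathrm{ad}}:[\bar k,\pi(x)]\in K_p'\}$, one has $\Omega=(\pi|_{K_p})^{-1}\big(\Omega^{\mathrm{ad}}\cap\pi(K_p)\big)$, while by definition $\phi_{K_p'}(\pi(x))$ is the probability Haar measure of $\Omega^{\mathrm{ad}}$ inside $K_p^{\mathrm{ad}}$. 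Both $\Omega$ and $\Omega^{\mathrm{ad}}$ are open and closed, being preimages of the compact open set $K_p'$ under continuous maps, so no measurability issue arises.

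Next comes the measure bookkeeping. Since $\mu_{G,v}^{\mathrm{can}}(K_v)=1$ for every $v\nmid D\infty$, and every $v\mid p$ satisfies $v\nmid D$ because $(N,D)=1$, the measure $\mu_{G,p}^{\mathrm{can}}$ restricts to the probability Haar measure on the compact group $K_p$. The continuous surjective homomorphism $\pi\colon K_p\to\pi(K_p)$ then pushes this forward to the probability Haar measure of $\pi(K_p)$, so $\mu_{G,p}^{\mathrm{can}}(\Omega)$ equals the probability Haar measure of $\Omega^{\mathrm{ad}}\cap\pi(K_p)$ inside $\pi(K_p)$. Granting that $\pi(K_p)$ is an open subgroup of $K_p^{\mathrm{ad}}$, rescaling the restricted Haar measure identifies the latter with $[K_p^{\mathrm{ad}}:\pi(K_p)]$ times the $K_p^{\mathrm{ad}}$-probability of $\Omega^{\mathrm{ad}}\cap\pi(K_p)$, which is $\leqslant[K_p^{\mathrm{ad}}:\pi(K_p)]\,\phi_{K_p'}(\pi(x))$.

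The step I expect to require genuine care -- and where essentially all the work lies -- is controlling $[K_p^{\mathrm{ad}}:\pi(K_p)]$ uniformly in $p$, and in particular checking $\pi(K_p)\subseteq K_p^{\mathrm{ad}}$ at all, which is a matter of reconciling our integral models with those of Finis-Lapid. After possibly enlarging $D$ so that, for $v\nmid D$, the schematic closures of $G$ and $G^{\mathrm{ad}}$ are smooth reductive $\cO_v$-group schemes with $Z$ finite \'etale over $\cO_v$, with $\pi$ extending over $\cO_v$, and with $K_v^{\mathrm{ad}}$ equal to the $\cO_v$-points of the model of $G^{\mathrm{ad}}$, the exact sequence $1\to Z\to G\to G^{\mathrm{ad}}\to1$ of $\cO_v$-group schemes gives $\pi(G(\cO_v))\subseteq G^{\mathrm{ad}}(\cO_v)$ with cokernel injecting into $H^1_{\text{\'et}}(\cO_v,Z)$; since $\mathrm{Spec}\,\cO_v$ has pro-cyclic \'etale fundamental group, $\#H^1_{\text{\'et}}(\cO_v,Z)\leqslant\#Z$ for each such $v$. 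Taking the product over $v\mid p$ then yields $[K_p^{\mathrm{ad}}:\pi(K_p)]\leqslant(\#Z)^{[F:\Q]}$, a bound depending only on $G$ and $F$; being of finite index and closed, $\pi(K_p)$ is automatically open in $K_p^{\mathrm{ad}}$. Combining this with the previous two paragraphs proves the lemma. (No hypothesis at places $v\mid D$ is needed, as $p\mid N$ and $(N,D)=1$ force $p\nmid D$.)
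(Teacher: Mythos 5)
Your proof is correct, and at the macroscopic level it follows the same route as the paper: identify $\{k\in K_p:[k,x]\in K_p'\}$ with the preimage under $\pi|_{K_p}$ of $\Omega^{\mathrm{ad}}\cap\pi(K_p)$, establish $\pi(K_p)\subset K_p^{\mathrm{ad}}$ for $p\nmid D$, compare the two Haar measures, and bound the index $[K_p^{\mathrm{ad}}:\pi(K_p)]$ uniformly in $p$.

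The one genuinely different ingredient is the index bound. The paper observes that $\pi^{-1}(K_p^{\mathrm{ad}})$ is a compact open subgroup containing $K_p$, hence equals $K_p$ by maximality; this gives $\pi(K_p)=K_p^{\mathrm{ad}}\cap\pi(G_p)$, so $[K_p^{\mathrm{ad}}:\pi(K_p)]\leqslant[G_p^{\mathrm{ad}}:\pi(G_p)]$, the latter being controlled by local Galois cohomology $H^1(F_v,Z)$. You instead bound the cokernel of $G(\cO_v)\to G^{\mathrm{ad}}(\cO_v)$ directly by $H^1_{\text{\'et}}(\cO_v,Z)$, and use that $\pi_1(\mathrm{Spec}\,\cO_v)\cong\widehat\Z$ is pro-cyclic to get the clean bound $\#H^1_{\text{\'et}}(\cO_v,Z)\leqslant\#Z$. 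Your route yields a fully explicit constant $(\#Z)^{[F:\Q]}$ and replaces a finiteness statement about $H^1$ of $p$-adic fields with the elementary pro-cyclic observation; what it costs is requiring $Z$ to be \'etale over $\cO_v$ (i.e.\ $p\nmid\#Z$) and some extra care reconciling the Finis--Lapid integral model $K_p^{\mathrm{ad}}$ with the quotient model $\mathcal{G}/\mathcal{Z}$, a step the paper handles concretely via the product embedding $\Delta=(\rho_\Q\times\rho_\Q^{\mathrm{ad}})\circ(\mathrm{id}\times\pi)$ where you invoke it somewhat informally by enlarging $D$. Both are fine since the constant is allowed to depend on $G$.
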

\begin{proof}
We first claim that $\pi(K_p) \subset K_p^\text{ad}$ for almost all $p$. To see this, we choose a $\Q$-embedding $\rho_\Q: \text{Res}_{F/\Q} G \to \GL(N')$ with $K_p = \rho_\Q^{-1}( \GL(N',\Z_p) )$ for almost all $p$. The embedding
\[
\Delta : \text{Res}_{F/\Q} G \xrightarrow{\text{id} \times \pi} \text{Res}_{F/\Q} G \times \text{Res}_{F/\Q} G^\text{ad} \xrightarrow{\rho_\Q \times \rho_\Q^\text{ad} } \GL(N' + N^\text{ad})
\]
is such that $\Delta^{-1}( \GL(N' + N^\text{ad}, \Z_p) ) \subset K_p$ is hyperspecial for almost all $p$, whence the claim.  Increasing $D$ if necessary, we may then assume that $\pi(K_p) \subset K_p^\text{ad}$ for all $p \nmid D$.

For convenience, let us write
\[
U=\{k\in K_p: [k,x]\in K_p'\}\quad\text{and}\quad V=\{k\in K_p^{\rm ad}: [k,\pi(x)]\in K_p'\}.
\]
We want to show that $\mu_{G,p}^{\rm can}(U)\ll {\rm vol}(V)$. From the inclusion $\pi(K_p) \subset K_p^\text{ad}$ it follows that $\pi(U)$ is contained in both $\pi(K_p)$ and $V$, yielding
\[
\mu_{G,p}^{\rm can}(U)\leqslant \pi_*\mu_{G,p}^{\rm can}(\pi(U))\leqslant  \pi_*\mu_{G,p}^{\rm can}( V\cap \pi(K_p)).
\]
Now, on $\pi(K_p)$ the push-forward measure $\pi_*\mu_{G,p}^{\rm can}$ is just $[K^{\rm ad}_p:\pi(K_p)]{\rm vol}$. Indeed, since $K_p$ is maximal compact we have $\pi^{-1}(K_p^{\rm ad})=K_p$ so that $\pi_*\mu_{G,p}^{\rm can}(\pi(K_p))=\mu_{G,p}^{\rm can}(K_p)=1$, while ${\rm vol}(\pi(K_p))=[K^{\rm ad}_p:\pi(K_p)]^{-1}$. We deduce that
\[
\pi_*\mu_{G,p}^{\rm can}( V\cap \pi(K_p))=[K^{\rm ad}_p:\pi(K_p)]{\rm vol}(V\cap \pi(K_p))\leqslant [K^{\rm ad}_p:\pi(K_p)]{\rm vol}(V).
\]
Finally, $\pi(K_p)=K_p^{\rm ad}\cap \pi(G_p)$ so that $[K^{\rm ad}_p:\pi(K_p)]\leqslant [G^{\rm ad}_p:\pi(G_p)]$, which is bounded in terms of $G$.
\end{proof}

We now fix a $\Z$-lattice $\Lambda$ in ${\rm Res}_{F/\Q}(\g)$ such that $\Lambda\otimes\widehat{\Z}$ is $K^{\rm ad}$-stable. For $y\in G^{\rm ad}_p$ define $\lambda_p(y)$ as in \cite[Definition 5.2]{FL}, namely, 
\[
\lambda_p(y)=\max\{n\in\Z\cup\{\infty\}: (\Ad(y)-1){\rm Pr}_\mathfrak{h}(\Lambda\otimes\Z_p)\subset p^n(\Lambda\otimes\Z_p)\text{ for some }\mathfrak{h}\neq 0\},
\]
where $\mathfrak{h}$ ranges over the nontrivial $\Q_p$-ideals of $\g_p=\prod_{v\mid p}\g_v$ and ${\rm Pr}_\mathfrak{h}$ denotes the corresponding projection $\g_p\rightarrow\mathfrak{h}\subset\g_p$. 

We first show how $\lambda_p(y)$ controls the size of $\phi_{K_p'}(y)$, using a combination of Propositions 5.10 and 5.11\footnote{Note that Propositions 5.10 and 5.11 of \cite{FL} assume that $G$ is simply connected. This assumption can be dropped for those subgroups not containing the intersection of $K_p$ with $G_p^+$. Here, $G_p^+$ denotes the image in $G_p$ of the $\Q_p$-points of the simply connected cover of $G_p$. That our subgroups $K_p'=K_p(N) K_{H,p}$ satisfy this condition (for $p$ large enough with respect to $G$ and $F$) can be seen from comparing indices. The subgroups $K_p(N) K_{H,p}$ have indices growing like a power of $p$, whereas those containing $K_p\cap G_p^+$ are of index bounded in terms of $G$ and $F$.} of \cite{FL}. The argument is already present in \cite[Section 5.2]{FL} in the deduction of the global result \cite[Theorem 5.3]{FL} from these two local results.

\begin{lemma}
For every $\epsilon>0$ small enough there is $\delta>0$ such that whenever $y\in K_p^{\rm ad}$ verifies $\lambda_p(y)<\epsilon n_p$ then $\phi_{K_p'}(y)\ll_G p^{-\delta n_p}$. 
\end{lemma}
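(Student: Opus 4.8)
The plan is to deduce this estimate directly from Propositions 5.10 and 5.11 of \cite{FL}, reproducing the argument of \cite[\S 5.2]{FL} by which those two local statements are combined to prove the global result \cite[Theorem 5.3]{FL}. The only extra input needed is a lower bound, uniform in $p$, for the level of the congruence subgroup $K_p'=K_p(N)K_{H,p}$ --- equivalently, for its index $[K_p:K_p']$ --- showing that this level is $\gg n_p$. Everything of substance is then contained in \cite{FL}.

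First I would record the index estimate. Since $G$ and $H$ are smooth over $\cO[1/D]$ and the inclusion $H\hookrightarrow G$ extends to a closed immersion of smooth group schemes over $\cO[1/D]$ (Section~\ref{sec:AG}), reduction modulo $p^{n_p}$ gives $[K_p:K_p(N)]\asymp p^{n_p\dim G}$ and $[K_{H,p}:K_{H,p}\cap K_p(N)]\asymp p^{n_p\dim H}$, with implied constants depending only on $G$ and $F$. As $K_p(N)$ is normal in $K_p$, the product $K_p'=K_p(N)K_{H,p}$ is a subgroup and
\[
[K_p:K_p']=\frac{[K_p:K_p(N)]}{[K_{H,p}:K_{H,p}\cap K_p(N)]}\asymp p^{n_p(\dim G-\dim H)},
\]
where $\dim G-\dim H>0$ because $H_{v_0}$ is a maximal compact subgroup of the noncompact group $G_{v_0}$. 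In particular, if $K_p(p^m)\subseteq K_p'$ then $p^{n_p(\dim G-\dim H)}\ll p^{m\dim G}$, so the level of $K_p'$ is $\gg n_p$. This same index comparison also verifies the hypothesis permitting the use of \cite[Prop.~5.10, 5.11]{FL} for the not necessarily simply connected group $G$, as noted in the footnote above: any open subgroup of $K_p$ containing $K_p\cap G_p^+$ has index bounded purely in terms of $G$ and $F$, whereas $[K_p:K_p']$ is a fixed positive power of $p^{n_p}$, so $K_p\cap G_p^+\not\subseteq K_p'$ once $p$ is large with respect to $G$ and $F$.

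With these preliminaries in place, the argument is that of \cite[\S 5.2]{FL}. Propositions 5.10 and 5.11 of \cite{FL} together control $\phi_{K'}(x)$ in terms of the level of $K'$ and the invariant $\lambda_p(x)$: after renormalising away the part of $\Ad(x)-1$ that is divisible by a high power of $p$ --- which costs a shift of size $O(\lambda_p(x))$ in the effective level --- one is reduced to a $p$-regular element, for which $\phi$ is bounded by a negative power of the level, uniformly in $p$. Concretely one obtains an inequality of the shape
\[
\phi_{K_p'}(y)\ll_G p^{-c_0\bigl(n_p-c_1\lambda_p(y)\bigr)}
\]
for absolute constants $c_0,c_1>0$ depending only on $G$ and $F$, where the index estimate above has been used to replace the level of $K_p'$ by a constant multiple of $n_p$. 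It then suffices to choose $\epsilon$ small enough that $c_1\epsilon<1/2$: under the hypothesis $\lambda_p(y)<\epsilon n_p$ the exponent satisfies $n_p-c_1\lambda_p(y)>n_p/2$, so $\phi_{K_p'}(y)\ll_G p^{-\delta n_p}$ with $\delta=c_0/2$.

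The main obstacle is bookkeeping rather than analysis: one must match our subgroup $K_p'=K_p(N)K_{H,p}$ to the hypotheses and normalisations of \cite[Prop.~5.10, 5.11]{FL} --- identifying the quantity playing the role of the level of $K_p'$, confirming it is bounded below by a constant times $n_p$ uniformly in $p$, and checking the non-simply-connected form of those propositions applies --- and then verify that all the constants inherited from \cite{FL} (and hence the final $\delta$, and the permitted range of $\epsilon$) are indeed independent of $p$. Once this dictionary is fixed, the rest is a direct transcription of the deduction carried out in \cite[\S 5.2]{FL}.
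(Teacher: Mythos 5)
Your proof takes the same route as the paper's (combine \cite[Prop.~5.10, 5.11]{FL}, relying on the footnote's index comparison to justify the non--simply-connected form of those propositions), and your overall conclusion is correct. However, the central step is asserted rather than derived, and the assertion elides exactly the point the paper has to work around. Specifically, you claim that \cite[Prop.~5.10, 5.11]{FL} combine directly to give a bound of the clean form $\phi_{K_p'}(y)\ll_G p^{-c_0(n_p-c_1\lambda_p(y))}$ with a $p$-independent implied constant. What \cite[Prop.~5.11]{FL} actually yields (as the paper records) is $\phi_{K_p'}(y)\leqslant p^{a(c+\lambda_p(y)-bn_p)}$ with an \emph{additive} constant $c\geqslant 0$. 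This produces a stray factor $p^{ac}$ that cannot be absorbed into a $\ll_G$ constant, and for $n_p$ bounded this makes the Prop.~5.11 bound vacuous. The paper handles this by splitting: apply Prop.~5.11 only when $n_p>\epsilon^{-1}$ (so that the $\lambda_p(y)<\epsilon n_p$ hypothesis, together with $\epsilon<(c+1)^{-1}b$, renders the exponent genuinely negative), and in the complementary range $n_p\leqslant\epsilon^{-1}$ observe that $\lambda_p(y)<\epsilon n_p\leqslant 1$ forces $\lambda_p(y)=0$, at which point Prop.~5.10 gives $\phi_{K_p'}(y)\ll p^{-1}\leqslant p^{-\epsilon n_p}$. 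Your stated combined bound is in fact recoverable from this case split with appropriate $c_0,c_1$ (for $n_p\leqslant c_1\lambda_p(y)$ it is trivial; for $\lambda_p(y)\geqslant 1$ and $n_p>c_1\lambda_p(y)$ one has $n_p$ large enough that the $p^{ac}$ factor is dominated; for $\lambda_p(y)=0$ and small $n_p$ one uses Prop.~5.10), but the derivation \emph{is} the case split, and it should be spelled out rather than presented as an immediate consequence of the two propositions. You should also be careful that your final choice of $\epsilon$ must additionally be constrained by the constant $c$ from Prop.~5.11 (as in the paper's condition $\epsilon<(c+1)^{-1}b$), not just by $c_1\epsilon<1/2$ as you state.

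Your preliminary index computation $[K_p:K_p']\asymp p^{n_p(\dim G-\dim H)}$ is a reasonable elaboration of what the paper compresses into its footnote, and it is correct. But note that the paper's proof of the lemma itself does not pass through the level of $K_p'$ --- it applies Prop.~5.11 with $n_p$ directly --- so the discussion of the level being $\gg n_p$ is only needed for the footnote's applicability check, not as an input to the main estimate.
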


\begin{proof}
From \cite[Proposition 5.11]{FL} it follows that there are positive constants $a,b>0$ and $c\geqslant 0$ (depending only on $G$) such that $\phi_{K_p'}(y)\leqslant p^{a(c+\lambda_p(x)-b n_p)}$. The constant $c$ renders this bound useless for small $n_p$. Taking $\epsilon$ small enough to satisfy $0<\epsilon< (c+1)^{-1}b$, we shall apply this bound only in the range $n_p>\epsilon^{-1}$; we obtain $\phi_{K_p'}(y)\leqslant p^{-a(b-\epsilon (c+1))n_p}$. In the remaining range $n_p\leqslant \epsilon^{-1}$ we see that $\lambda_p(y)<1$ so that in fact $\lambda_p(y)=0$. In this case \cite[Proposition 5.10]{FL} ensures that $\phi_{K_p'}(y)\ll_G p^{-1}\leqslant p^{-\epsilon n_p}$. Taking $\delta=\min\{\epsilon, a(b-\epsilon (c+1))\}$, we establish the claim.
\end{proof}

It remains to understand the relation between $D_p(y)$ and $\lambda_p(y)$, for $y\in K_p^{\rm ad}$. For this, the following lemma will be helpful.

\begin{lemma}
\label{Adnontriv}
If $\gamma\in G(F)-Z(F)$, $\Ad(\gamma)$ acts nontrivially on every nontrivial $\Q_p$-ideal of $\g_p$.

\end{lemma}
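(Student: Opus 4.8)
The plan is to reduce, by two successive base changes, to the elementary fact that an element of $G(F)$ acting trivially on every geometric simple factor of $\g$ must be central, a fact which is incompatible with $\gamma\notin Z(F)$ precisely because $G$ is $F$-almost simple.

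First I would dispose of the product structure. Since $\g_p=\bigoplus_{v\mid p}\g_v$ is a direct sum of semisimple (hence perfect) Lie algebras, every $\Q_p$-ideal $\mathfrak{h}$ of $\g_p$ splits as $\mathfrak{h}=\bigoplus_{v\mid p}(\mathfrak{h}\cap\g_v)$. Indeed, writing $\mathrm{pr}_v\colon\g_p\to\g_v$ for the (surjective, Lie) projection, $\mathrm{pr}_v(\mathfrak{h})$ is an ideal of $\g_v$, hence perfect, so $\mathrm{pr}_v(\mathfrak{h})=[\g_v,\mathrm{pr}_v(\mathfrak{h})]=[\g_v,\mathfrak{h}]\subseteq\mathfrak{h}\cap\g_v\subseteq\mathrm{pr}_v(\mathfrak{h})$, whence equality throughout and $\mathfrak{h}=\bigoplus_v\mathrm{pr}_v(\mathfrak{h})$. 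Since $\gamma\in G(F)$, the operator $\Ad(\gamma)$ preserves each $\g_v$, so it is enough to show that $\Ad(\gamma)$ acts nontrivially on every nonzero $\Q_p$-ideal of each individual $\g_v$.

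Next I would base change to $\overline{\Q_p}$. Write $\g\otimes_F\overline F=\bigoplus_{j=1}^m\g^{(j)}$ as a sum of its simple minimal ideals. Because $F_v/\Q_p$ is finite there is an isomorphism $\g_v\otimes_{\Q_p}\overline{\Q_p}\cong\bigoplus_{\tau}\bigl(\bigoplus_j\g^{(j)}\otimes_{\overline F}\overline{\Q_p}\bigr)$, the outer sum over the embeddings $\tau\colon F_v\hookrightarrow\overline{\Q_p}$; this exhibits $\g_v\otimes\overline{\Q_p}$ as a direct sum of simple Lie algebras over $\overline{\Q_p}$ on which $\gamma$ acts through the maps induced by $\Ad(\gamma)$ on the $\g^{(j)}$ (together with a possible permutation of these summands induced by $\Ad(\gamma)$, which only helps). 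A nonzero $\Q_p$-ideal of $\g_v$ becomes, after $\otimes_{\Q_p}\overline{\Q_p}$, a nonzero sum of some of these simple summands, so it suffices to prove that $\Ad(\gamma)$ acts nontrivially on $\g^{(j)}$ for every $j$. Finally, set $J=\{\,j:\Ad(\gamma)|_{\g^{(j)}}=\mathrm{id}\,\}$. As $G$ is $F$-almost simple (the standing hypothesis of this section) and semisimple, $\g$ is $F$-simple, so $\mathrm{Gal}(\overline F/F)$ permutes the $\g^{(j)}$ transitively; since $\gamma\in G(F)$, $\Ad(\gamma)$ commutes with this action, so $J$ is Galois-stable and hence $J=\varnothing$ or $J=\{1,\dots,m\}$. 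The latter would force $\Ad(\gamma)=\mathrm{id}$ on $\g\otimes_F\overline F$, i.e. $\gamma$ in the kernel of the adjoint representation, so $\gamma\in Z$; as $Z$ is an $F$-subgroup and $\gamma\in G(F)$ this gives $\gamma\in Z(F)$, contradicting the hypothesis. Therefore $J=\varnothing$, which is exactly the assertion.

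The only point requiring genuine care is the bookkeeping in the second base change: a $\Q_p$-ideal of $\g_v$ need not be an $F_v$-submodule, so one cannot argue over $\overline F$ directly and must pass all the way to $\overline{\Q_p}$, checking that there the $\gamma$-action remains diagonal with respect to the decomposition into simple factors. Everything else is formal.
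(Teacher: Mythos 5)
Your proof is correct but takes a genuinely different route from the paper's. The paper also first passes to a single $v\mid p$, but then forms the $F_v$-linear span $\gh_v$ of ${\rm Pr}_{\g_v}\gh$, observes that this is an $F_v$-ideal of $\g_v$ contained in $\g_{v,\gamma}$, and exploits the fact that an ideal is $\Ad(G_v)$-stable, hence $\Ad(G(F))$-stable, hence contained in $\bigcap_{g\in G(F)}\Ad(g)\,\g_{v,\gamma}$. This intersection is shown to vanish by the Zariski-density argument already set up in the proof of Lemma \ref{lemma-real}: $\bigcap_{g\in G(F)}gG_\gamma g^{-1}$ is a proper normal $F$-subgroup, hence central by $F$-almost simplicity. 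You instead push the decomposition all the way down to the geometrically simple summands over $\overline{\Q_p}$ (and over $\overline{F}$), and invoke $F$-almost simplicity in the explicit form of Galois transitivity on those summands. Both proofs ultimately reduce to the fact that $\Ad(\gamma)=\mathrm{id}$ forces $\gamma\in Z(F)$, but the mechanisms differ: the paper uses a soft density/normality argument that re-uses machinery already in the paper, while yours is more elementary and structural, at the cost of some bookkeeping in the two base changes (which you handle correctly, including the potential permutation of simple factors by $\Ad(\gamma)$ and the fact that a $\Q_p$-ideal of $\g_v$ need not be $F_v$-stable). One small remark: the perfectness argument in your first paragraph is more than is needed, since the splitting $\gh=\bigoplus_v(\gh\cap\g_v)$ and the reduction to one $v$ can be bypassed entirely if one is willing to decompose $\g_p\otimes_{\Q_p}\overline{\Q_p}$ directly; but as written it is fine.
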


\begin{proof}

Suppose that $\gh \subset \g_p$ is a nontrivial $\Q_p$-ideal on which $\Ad(\gamma)$ acts trivially.  Let $v | p$ be a place for which ${\rm Pr}_{\g_v} \gh \neq 0$, and let $\gh_v$ be the $F_v$-linear span of ${\rm Pr}_{\g_v} \gh$.  Then $\gh_v$ is a nontrivial ideal of $\g_v$ on which $\Ad(\gamma)$ acts trivially, or equivalently such that $\gh_v \subset \g_{v,\gamma}$.  Since $G$ is $F$-almost simple we have $\bigcap_{g \in G(F)} g G_\gamma g^{-1} \subset Z(G)$ as in Lemma \ref{lemma-real}. This implies that $\bigcap_{g \in G(F)} \Ad(g) \g_\gamma = 0$, and hence $\bigcap_{g \in G(F)} \Ad(g) \g_{v, \gamma} = 0$.  However, this is a contradiction as $\Ad(g) \gh_v = \gh_v$ for all $g \in G_v$.
\end{proof}

Finally, we complete the proof of \eqref{FL-bound}. Note that we have assumed the $x \in K_p$ to which we apply \eqref{FL-bound} are $G_p$-conjugate to an element in $G(F)-Z(F)$. We may therefore apply the conclusion of Lemma \ref{Adnontriv} to $x$, and thus to $y=\pi(x)\in K_p^{\rm ad}$.

\begin{lemma}
Let $y\in K_p^{\rm ad}$ be semisimple. Assume that $\Ad(y)$ is nontrivial on every nontrivial $\Q_p$-ideal of $\g_p$ and $D_p(y)> p^{-\epsilon n_p}$. Then $\lambda_p(y)<\epsilon n_p$.

\end{lemma}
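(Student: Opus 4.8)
The plan is to prove the contrapositive in the strong form $D_p(y) \leqslant p^{-\lambda_p(y)}$, valid for every semisimple $y \in K_p^{\rm ad}$ on which $\Ad(y)$ is nontrivial on every nontrivial $\Q_p$-ideal of $\g_p$; combined with the hypothesis $D_p(y) > p^{-\epsilon n_p}$ this forces $\lambda_p(y) < \epsilon n_p$.

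First I would unwind the definition of $\lambda_p(y)$: setting $n = \lambda_p(y)$, there is a nonzero $\Q_p$-ideal $\gh$ of $\g_p$ with $(\Ad(y)-1){\rm Pr}_\gh(\Lambda\otimes\Z_p) \subseteq p^n(\Lambda\otimes\Z_p)$. The structural point I would establish is that $\Ad(y)$ stabilizes $\gh$: writing $\g_p = (\Res_{F/\Q}\g)\otimes_\Q\Q_p$, which is semisimple, its $\Q_p$-ideals correspond to Galois-stable sets of geometric simple factors, while $\Ad(y)$ lies in the identity component of $\mathrm{Aut}(\g_p)$ (being inner, and coming from the connected group $\Res_{F/\Q}G$), hence stabilizes each geometric simple factor and so each $\Q_p$-ideal. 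Putting $M = {\rm Pr}_\gh(\Lambda\otimes\Z_p)$, a $\Z_p$-lattice in $\gh$, one has $p^n(\Lambda\otimes\Z_p)\cap\gh \subseteq p^n M$ (apply ${\rm Pr}_\gh$ to $p^n w \in \gh$), so the displayed inclusion improves, with no loss of exponent, to $(\Ad(y)-1)|_\gh\, M \subseteq p^n M$. Hence $(\Ad(y)-1)|_\gh = p^n B$ with $B \in \End_{\Z_p}(M)$, so the eigenvalues of $(\Ad(y)-1)|_\gh$ — that is, the numbers $\alpha - 1$ as $\alpha$ runs over the eigenvalues of $\Ad(y)|_\gh$ on $\gh\otimes\overline{\Q_p}$ — all have (extended) $p$-adic absolute value at most $p^{-n}$.

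Next I would use the remaining two hypotheses. Since $y$ is semisimple, $\Ad(y)|_\gh$ is diagonalizable; since $\Ad(y)$ is nontrivial on $\gh$, we have $\Ad(y)|_\gh \neq \id$, so some eigenvalue $\alpha_0 \neq 1$ occurs, and $|1-\alpha_0|_p \leqslant p^{-n}$. Finally, a normalization bookkeeping — using $|x|_v = |N_{F_v/\Q_p}(x)|_p$ for $v \mid p$ — gives $D_p(y) = \prod_{v\mid p} D_v(y) = |\det_{\Q_p}(1-\Ad(y)|_{\g_p/\g_{p,y}})|_p = \prod_{\alpha\neq 1}|1-\alpha|_p$, the product over eigenvalues of $\Ad(y)$ on $\g_p\otimes\overline{\Q_p}$. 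Because $\Ad(y)$ preserves the lattice $\Lambda\otimes\Z_p$, each such $|1-\alpha|_p \leqslant 1$, and since $\alpha_0$ occurs among these eigenvalues we get $D_p(y) \leqslant |1-\alpha_0|_p \leqslant p^{-n} = p^{-\lambda_p(y)}$, as desired.

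The step I expect to require the most care is the middle one: confirming that $\Ad(y)$ genuinely stabilizes the ideal $\gh$, and that the passage from the condition on ${\rm Pr}_\gh(\Lambda\otimes\Z_p)$ to the condition $(\Ad(y)-1)|_\gh M \subseteq p^n M$ on an honest lattice $M \subset \gh$ is lossless, so that the full exponent $\lambda_p(y)$ transfers to the eigenvalue bound. The identity $D_p(y) = |\det_{\Q_p}(1 - \Ad(y)|_{\g_p/\g_{p,y}})|_p$ via the norm map is routine but should be spelled out.
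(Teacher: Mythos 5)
Your proof is correct and follows essentially the same route as the paper's. Both arguments unwind the definition of $\lambda_p(y)$ to produce an ideal $\gh$ on which $\Ad(y)-1$ contracts a $\Z_p$-lattice by $p^n$, conclude that all eigenvalues of $(\Ad(y)-1)|_{\gh}$ have valuation at least $n$, invoke semisimplicity plus the nontriviality hypothesis to produce a nonzero such eigenvalue, and compare with the Weyl discriminant $D_p(y)$, using that the remaining eigenvalue factors are $p$-adically at most $1$.

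The presentational differences are all in your favor: you prove the clean strong form $D_p(y) \leqslant p^{-\lambda_p(y)}$ rather than the rescaled contradiction the paper runs; you actually justify that $\Ad(y)$ stabilizes $\gh$ (via inner automorphisms fixing the geometric simple factors), where the paper states this without comment; you make the passage from $(\Ad(y)-1)\operatorname{Pr}_{\gh}(\Lambda\otimes\Z_p)\subset p^n(\Lambda\otimes\Z_p)$ to a genuine lattice inclusion on $M=\operatorname{Pr}_{\gh}(\Lambda\otimes\Z_p)$ explicit; and you spell out the normalization $|x|_v = |N_{F_v/\Q_p}(x)|_p$ that underlies $D_p(y)=\prod_{\alpha\neq 1}|1-\alpha|_p$ and the bound $|1-\alpha|_p\leqslant 1$, which the paper leaves implicit. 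None of these change the substance, but they do close the small gaps in the paper's exposition.
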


\begin{proof}

Let $l = \lceil \epsilon n_p \rceil$.  Now if $\lambda_p(y)\geqslant \epsilon n_p$ then, by definition, there is some nontrivial $\Q_p$-ideal $\gh$ of $\g_p$ such that
\[
(\Ad(y) - 1) \text{Pr}_\gh(\Lambda\otimes\Z_p) \subset p^{l}( \Lambda\otimes\Z_p).
\]
As $\Ad(y)$ preserves $\gh$, this implies that
\[
(\Ad(y) - 1) \text{Pr}_\gh(\Lambda\otimes\Z_p) \subset p^{l} \text{Pr}_\gh(\Lambda\otimes\Z_p).
\]
This implies that all eigenvalues of $\Ad(y) - 1$, considered as an endomorphism of the $\Q_p$ vector space $\gh$, must have $p$-adic valuation at least $l$. Moreover, as $y$ is semisimple and $\Ad(y)$ is nontrivial on $\gh$, one of these eigenvalues must be nonzero.  As $D_p(y) = | \det( 1 - \Ad(y))_{\g_p / \g_{p,y} } |_p$, we therefore have $D_p(y)\leqslant p^{-\epsilon n_p}$, a contradiction.
\end{proof}

\subsection{Proof of Theorem \ref{trace-guy}}
We recall the test function $\phi$ from the statement of Proposition \ref{propOI}, and the set $\mathcal{C}^{\leqslant\kappa}_T$ from Section \ref{sec:volumes}.  Then
\[
I_\text{spec}(\phi, \mu^\text{can}_G) = \mathrm{Vol}_N\sum_{i\geqslant 0} \widehat{k_T\otimes k_\xi}(\psi_i),
\]
the sum ranging over an orthonormal basis of Hecke--Maass forms for $Y_N$, and
\bes
I_\text{geom}(\phi, \mu^\text{can}_G) =\mu^\text{can}_G([G])  k_T(1)k_\xi(1)+\sum_{\{ \gamma \}\in \cC^{\leqslant \kappa}_T} \frac{\mu^\text{can}_{I_\gamma}(I_\gamma(F)\backslash I_\gamma(\A))}{ | G_\gamma(F) : I_\gamma(F)|} O_\gamma(\phi).
\ees
Here we have used the hypothesis on $K$ from Section \ref{cmp-sg} that $Z(F)\cap K=\{e\}$. Now by \cite[Corollary 8.10]{ST} we have $|\cC^{\leqslant \kappa}_T| \ll q_T^{A\kappa + B}$. From this and Propositions \ref{vol-est} and \ref{propOI} we find
\[
I_\text{geom}(\phi, \mu^\text{can}_G)= \mu^\text{can}_G([G])k_T(1)k_\xi(1)+O(q_T^{A\kappa+B}N^{-\delta}\beta(\xi) (1 + \|\xi\|)^{-\eta}\|k_T\|_\infty),
\]
as desired. This completes the proof of Theorem \ref{trace-guy}.

\section{Bounds for real orbital integrals}\label{ROI}

The aim of this section is to prove Proposition \ref{blowupint}, which establishes the uniform bounds on real orbital integrals that were used in the proof of the global bounds of Proposition \ref{propOI}.

\subsection{Notation}
\label{ROIint}

We adopt the following notation in this section.

\begin{itemize}

\item $G$ is a connected reductive group over $\R$ with real Lie algebra $\g$.
 
\item $\theta$ is a Cartan involution of $G$.

\item $K$ is the fixed point set of $\theta$, so that $K$ is a maximal compact subgroup of $G$.

\item $\g = \p + \gk$ is the Cartan decomposition associated with $\theta$.

\item $A$ and $A_G$ are maximal split tori in $G$ and $Z(G)$ respectively.  We assume that $\theta$ acts by $-1$ on $A$ and $A_G$, see e.g. \cite[Section 24.C]{Bo}.

\item $A^0$ is the connected component of $A$ in the real topology.

\item $\ga$ and $\ga_G$ are the Lie algebras of $A$ and $A_G$.

\item $W$ is the Weyl group of $\ga$.  We recall \cite[Section 24.C]{Bo} that this is equal to both $N_G(A) / Z_G(A)$, $N_K(A) / Z_K(A)$, and the group generated by reflections in the roots of $\ga$ in $\g$.

\item $\ga^+$ is a choice of open Weyl chamber in $\ga$.

\item $\langle \cdot, \cdot \rangle$ is an Ad-invariant bilinear form on $\g$ that is positive on $\p$ and negative on $\gk$.

\item $\langle \cdot, \cdot \rangle_+$ is the positive definite inner product on $\g$ given by $\langle u, v \rangle_+ = -\langle \theta u, v \rangle$.  We let $\| \cdot \|$ be the associated norm.

\end{itemize}

For a semisimple element $\gamma\in G$ let $G_\gamma$ be its centralizer, and let $I_\gamma$ be the neutral component of $G_\gamma$. The Lie algebra of $G_\gamma$ is denoted by $\g_\gamma$.  By \cite[Theorem 7.39]{Kn}, $G$ has a Cartan decomposition $G = K A^0 K$, and any $g \in G$ may be written as $g = k_1 e^H k_2$ for a unique $H \in \overline{\ga}^+$.  We use this to define a map $X : G \rightarrow \overline{\ga}^+$ by $g \in K e^{X(g)} K$.  We let $D^G$ (or $D$ if there is no confusion) denote the Weyl discriminant.  A Levi subgroup or parabolic of $G$ will be called semi-standard if it contains $A$.  We will always choose the Levi of a semi-standard parabolic to be semi-standard.

\subsection{Orbital integrals of continuous functions}

We shall derive Proposition \ref{blowupint} from the following result, which bounds $O_\gamma(f)$ for semisimple $\gamma \in G$ and non-singular test functions $f \in C(G)$.  It is the archimedean analog of a result of Kottwitz \cite[Theorem 13.1]{ST} in the $p$-adic case.

\begin{prop}
\label{boundedint}
If $f \in C^\infty(G)$ is bounded and compactly supported modulo center, then we have $O_\gamma(f) \ll_f D(\gamma)^{-1/2}$ for every semisimple $\gamma \in G$.
\end{prop}

\begin{proof}
The stated inequality is proved in \cite[Theorem 2]{HCFS} for $\gamma$  regular semisimple. The extension of this bound to all semisimple $\gamma$ follows immediately from Harish-Chandra's derivative formula \cite[Lemma 23]{HCDS}, which expresses $O_\gamma(f)$ as the derivative of (a quantity closely related to) $D(\sigma)^{1/2} O_\sigma(f)$, for regular semisimple $\sigma$ in a neighborhood of $\gamma$. The idea is that the latter is bounded independently of $\gamma$, and examining the constant that appears there gives the proposition.

We now explain this idea in more detail. We recall that a maximal torus $T$ in a connected reductive group $H$ is called fundamental if the compact part of $T$ has maximum possible dimension. For any semisimple $\gamma \in G$, we may choose a fundamental maximal torus $T$ in $I_\gamma$. Note that $\gamma \in I_\gamma$, which implies that $\gamma \in T$. As there are only finitely many possiblities for $T$ up to conjugacy, we may assume that $T$ is fixed. Once $T$ is fixed there are finitely many possibilities for $I_\gamma$, so we also assume $I_\gamma$ is fixed.  Note that we need to choose $T$ fundamental in order for \eqref{DIFf} below to hold with a nonzero constant $c$.

We let $\Delta$ and $\Delta_\gamma$ be the roots of $T$ in $G$ and $I_\gamma$.  We choose a system of positive roots $\Delta^+$, and let $\Delta^+_\gamma = \Delta_\gamma \cap \Delta^+$.  We let $\rho$ be a branch of the square root of the modular character, i.e. a smooth function on $T$ defined near $\gamma$ so that $\rho^2 = \prod_{\alpha \in \Delta^+} \alpha$.  Let $G_\text{reg}$ be the set of strongly regular elements in $G$, which we recall is the set of $\sigma$ such that $G_\sigma$ is a torus, and define $T_\text{reg} = T \cap G_\text{reg}$.  Let $\Omega$ be a connected component of $T_\text{reg}$ such that $\gamma \in \overline{\Omega}$, and let $V \subset \Omega$ be the intersection of $\Omega$ with an open neighborhood of $\gamma$ in $T$.  For $\sigma \in V$, we define
\[
F_f(\sigma) = \rho(\sigma) \prod_{\alpha \in \Delta^+} (1 - \alpha(\sigma)^{-1}) \int_{T \backslash G} f(x^{-1} \sigma x) dx.
\]
For any $\alpha \in \Delta_\gamma$ let $H_\alpha \in \text{Lie}(T)$ be as defined in \cite[Section 4]{HCSR}. Define $D_I$ to be the translation-invariant differential operator on $T$ corresponding to $\prod_{\alpha \in \Delta_\gamma^+} H_\alpha$. By \cite[Lemma 23]{HCDS} (adapted to the real algebraic case), we may show that
\be
\label{DIFf}
\underset{\sigma \to \gamma}{\lim} D_I F_f(\sigma) = c \rho(\gamma) \prod_{\alpha \in \Delta^+ - \Delta_\gamma^+} (1 - \alpha(\gamma)^{-1}) \int_{I_\gamma \backslash G} f(x^{-1} \gamma x) dx,
\ee
where $c \neq 0$ depends only on the isomorphism class of $I_\gamma$.  

On the other hand, it is known \cite[Theorem 2]{HCFS} that $F_f \in C^\infty(V)$, and if $X$ is any translation-invariant differential operator on $T$, we have
\[
\underset{\sigma \in V}{\sup} | X F_f(\sigma) | \ll_{X,f} 1.
\]
Combined with \eqref{DIFf}, this gives the proposition.
\end{proof}

\subsection{Orbital integrals of singular functions}\label{sec:blowup}

We now arrive at the main result of this section.  We define $\| \cdot \|_0$ to be the seminorm on $\ga$ obtained from the Killing form, which descends to a norm on $\ga / \ga_G$.

\begin{definition}\label{def:Gcpt}
We define $G_{\rm cpt}$ to be the elements of $G$ such that $\Ad(g)$ is trivial on all the $\R$-simple factors of $\g$ of noncompact type.
\end{definition}

It is clear that $G_{\rm cpt}$ is a normal subgroup of $G$, and because its image under $\Ad$ is compact, $G_{\rm cpt}$ is compact if and only if $A_G$ is trivial.

\begin{prop}
\label{blowupint}
Suppose that $G$ is not a torus.  Let $0 < \eta < 1/2$ and let $f \in C(G)$ be bounded and compactly supported modulo center.  Then $O_\gamma( f \| X( \cdot ) \|_0^{-\eta} )$ converges absolutely, and there is a constant $c(\eta,f) > 0$ such that 
\be\label{prove-O-f}
|O_\gamma( f \| X( \cdot ) \|_0^{-\eta} )| < c(\eta,f) D(\gamma)^{-3/4}
\ee
for every semisimple $\gamma\in G- G_{\rm cpt}$.
\end{prop}

\begin{remark}
It may be seen that the bound \eqref{prove-O-f} cannot hold for all semisimple $\gamma \in G$.  For instance, if $\gamma$ lies in a compact normal subgroup of $G$ (and hence in $G_\text{cpt}$) then the conjugacy class of $\gamma$ will be contained in $K$, and the function $f \| X( \cdot ) \|_0^{-\eta}$ will be singular everywhere on the conjugacy class. 
\end{remark}

We sketch the proof of Proposition \ref{blowupint}, which will occupy the remaining subsections. In Section \ref{ROIaniso} we begin by reducing to the case where $Z(G)$ is anisotropic. This is simple; if $A_G$ is the maximal split torus in $Z(G)$, we simply push the orbital integrals forward to $G / A_G$. We then divide the proof into two cases, based on the following definition.

\begin{definition}

We say that $\gamma \in G$ is elliptic if it is semisimple and $Z(G_\gamma)$ is anisotropic.

\end{definition}

If our semisimple $\gamma$ is not elliptic, we may choose a nontrivial split torus $S \subset Z(G_\gamma)$ and define $M$ to be the centraliser of $S$ in $G$.  Because we have assumed that $Z(G)$ is anisotropic, $M$ is a proper Levi subgroup satisfying $G_\gamma \subset M$, and we may apply parabolic descent as well as Proposition \ref{boundedint}. This is executed in Section \ref{ROInonell}.

If $\gamma$ is elliptic, we may assume without loss of generality that $\gamma \in K$.  For $\epsilon > 0$, define $K(\epsilon) = \{ g \in G : \| X(g) \| < \epsilon \}$.  Proving Proposition \ref{blowupint} in this case is roughly equivalent to controlling the volume of the conjugacy class of $\gamma$ that lies inside $K(\epsilon)$, uniformly in $\epsilon$ and $D(\gamma)$.  This turns out to be equivalent to bounding the set of points in $G/K$ that are moved distance at most $\epsilon$ by the rotation $\gamma$.  Note that this set will be noncompact if $I_\gamma$ is noncompact. We do this in Section \ref{ROIell} using a geometric argument and Proposition \ref{boundedint}.

\subsection{Reduction to the case of $Z(G)$ anisotropic}
\label{ROIaniso}

Let $\overline{G} = G / A_G$.  We wish to show that if Proposition \ref{blowupint} holds for $\overline{G}$ then it holds for $G$.  We have an exact sequence $1 \rightarrow A_G \rightarrow G \rightarrow \overline{G} \rightarrow 1$ of algebraic groups over $\R$, and by Hilbert 90 this gives an exact sequence on points. If $\gamma \in G$, we denote its image in $\overline{G}$ by $\overline{\gamma}$.  We denote the connected centraliser of $\overline{\gamma}$ by $\overline{I}_{\overline{\gamma}}$.  There is an exact sequence $1 \rightarrow A_G \rightarrow I_\gamma \rightarrow \overline{I}_{\overline{\gamma}} \rightarrow 1$, and so $G \rightarrow \overline{G}$ induces a bijection $I_\gamma \backslash G \simeq \overline{I}_{\overline{\gamma}} \backslash \overline{G}$.  Moreover, under this bijection we have $\mu^{\text{can}}_G / \mu^{\text{can}}_{I_\gamma} = C \mu^{\text{can}}_{\overline{G}} / \mu^{\text{can}}_{\overline{I}_{\overline{\gamma}}}$ for some $C > 0$, and the function $\| X( \cdot) \|_0$ pushes forward to $\| X( \cdot) \|$.  It follows that if $f \in C(G)$ is invariant under $A_G$, and its reduction $\overline{f}$ lies in $C_c(\overline{G})$, we have $O^G_\gamma(f \| X( \cdot) \|_0^{-\eta}) = C \cdot O^{\overline{G}}_{\overline{\gamma}}(\overline{f} \| X( \cdot) \|^{-\eta})$.

Let $f \in C(G)$ be bounded and compactly supported modulo center. We may assume without loss of generality that $f \geqslant 0$, by replacing $f$ by its absolute value. Since $f$ is compactly supported modulo the center, we may choose $h \in C(G)$ that is invariant under $A_G$, satisfies $h \geqslant f$, and has reduction $\overline{h}$ lying in $C_c(\overline{G})$.  We have
\[
O^G_\gamma(f \| X( \cdot) \|_0^{-\eta}) \leqslant  O^G_\gamma(h \| X( \cdot) \|_0^{-\eta}) = C \cdot O^{\overline{G}}_{\overline{\gamma}}(\overline{h} \| X( \cdot) \|^{-\eta}).
\]
Because $D(\gamma) = D(\overline{\gamma})$, if Proposition \ref{blowupint} holds for $\overline{G}$ then it holds for $G$.

We may henceforth assume that $Z(G)$ is anisotropic. We may then take the function $f$ of Proposition \ref{blowupint} to lie in $C_c(G)$, and work with $\| X( \cdot) \|$ instead of $\| X( \cdot) \|_0$. As in Section \ref{sec:atf}, we write $\mu_\gamma=\mu^{\text{can}}_G / \mu^{\text{can}}_{I_\gamma}$.

\subsection{The case when $\gamma$ is not elliptic}
\label{ROInonell}

We handle the case when $\gamma$ is not elliptic by the process of parabolic descent, which we now recall.  Let $P = MN$ be a semi-standard parabolic subgroup of $G$. Choose Haar measures on $N$ and $K$ so that $dk$ gives $K$ measure 1, and for which $d\mu_G^\text{can} = d\mu_M^\text{can} dn dk$ in Langlands $MNK$ co-ordinates.  For functions in $C_c(G)$, the parabolic descent along $P$ is defined by
\bes
f \in C_c(G) \mapsto f^P \in C_c(M),
\ees
where
\bes
f^P(m) = \delta^{1/2}_P(m) \int_N \int_K f(k^{-1} mn k) dn dk.
\ees

If $\gamma \in M$, we define $D^G_M(\gamma)$ by chosing a maximal torus $\gamma \in T \subset M$, letting $\Delta$ and $\Delta_M$ be the roots of $T$ in $G$ and $M$, and setting $D^G_M(\gamma) = \prod_{\alpha \in \Delta - \Delta_M} |\alpha(\gamma) - 1|$.  It may be seen that this is independent of the choice of $T$.  We say that $\gamma \in M$ is $(G,M)$-regular if $D^G_M(\gamma) \neq 0$.  We recall the descent relation between the orbital integrals of $f$ and $f^P$.

\begin{lemma}
\label{descent}

If $\gamma \in M$ is $(G,M)$-regular and $f \in C_c(G)$, we have
\bes
D^G_M(\gamma)^{1/2} O_\gamma(f) = O_\gamma^M(f^P).
\ees

\end{lemma}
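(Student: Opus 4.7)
The plan is to execute the standard parabolic descent manipulation. The $(G,M)$-regularity hypothesis forces $1 - \Ad(\gamma)$ to act invertibly on $\gn$ (and on the opposite unipotent $\overline{\gn}$), which in turn implies $G_\gamma \subset M$ and hence $I_\gamma \subset M$. Combined with the decomposition $d\mu_G^{\text{can}} = d\mu_M^{\text{can}}\,dn\,dk$ fixed in the paragraph defining $f^P$, the Iwasawa identification $I_\gamma \backslash G \simeq (I_\gamma \backslash M) \times N \times K$ lets me unfold
\bes
O_\gamma(f) = \int_K \int_N \int_{I_\gamma \backslash M} f\bigl(k^{-1} n^{-1} (m^{-1}\gamma m)\, n\, k\bigr)\, d\mu^M_\gamma(m)\, dn\, dk.
\ees

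The crux of the argument is an $N$-change of variables. Writing $\gamma_m := m^{-1}\gamma m$ and using that $M$ normalizes $N$, one has the factorization $n^{-1} \gamma_m n = \gamma_m \cdot \psi_{\gamma_m}(n)$, where $\psi_{\gamma'}(n) := \gamma'^{-1} n^{-1} \gamma' n$. A standard lemma, proved by induction on the descending central series of $N$ and reducing at each step to the abelian case where $\psi_{\gamma'}$ is the linear map $1 - \Ad(\gamma')^{-1}$, shows that when $1 - \Ad(\gamma')$ is invertible on $\gn$ the map $\psi_{\gamma'}$ is a diffeomorphism of $N$ with constant Jacobian $|\det(1 - \Ad(\gamma')^{-1}|_\gn)|$. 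Substituting this change of variables into the integrand, and observing that both $D^G_M$ and $\delta_P$ are $M$-conjugation invariant functions of $\gamma$, yields
\bes
O_\gamma(f) = |\det(1 - \Ad(\gamma)^{-1}|_\gn)|^{-1} \int_{I_\gamma \backslash M} \int_N \int_K f\bigl(k^{-1}\gamma_m n k\bigr)\, dk\, dn\, d\mu^M_\gamma(m).
\ees
The inner double integral is $\delta_P^{-1/2}(\gamma) f^P(\gamma_m)$ by the very definition of $f^P$, so the right side becomes $\delta_P^{-1/2}(\gamma) \, |\det(1 - \Ad(\gamma)^{-1}|_\gn)|^{-1} \, O_\gamma^M(f^P)$.

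It then remains to verify the elementary Lie-algebraic identity $|\det(1 - \Ad(\gamma)^{-1}|_\gn)| = \delta_P(\gamma)^{-1/2} D^G_M(\gamma)^{1/2}$. This follows by comparing the eigenvalues of $\Ad(\gamma)$ on $\gn$ and on $\overline{\gn}$: since the eigenvalues on $\overline{\gn}$ are the inverses of those on $\gn$, a direct computation gives $|\det(1 - \Ad(\gamma)|_{\overline{\gn}})| = \delta_P(\gamma)^{-1} |\det(1 - \Ad(\gamma)|_\gn)|$, and combining this with $D^G_M(\gamma) = |\det(1 - \Ad(\gamma)|_\gn)| \cdot |\det(1 - \Ad(\gamma)|_{\overline{\gn}})|$ together with $|\det \Ad(\gamma)^{-1}|_\gn| = \delta_P(\gamma)^{-1}$ produces the claimed formula. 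Assembling the pieces yields $D^G_M(\gamma)^{1/2} O_\gamma(f) = O^M_\gamma(f^P)$. There is no substantive obstacle here; the only mild care is bookkeeping of measure normalizations, which are already enforced by the compatibility condition on $d\mu_G^{\text{can}}$, $d\mu_M^{\text{can}}$, $dn$, and $dk$ imposed in the definition of parabolic descent.
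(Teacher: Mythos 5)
Your proof is correct and follows exactly the same route as the paper: unfold $I_\gamma\backslash G$ in Langlands $MNK$ coordinates, change variables in $N$, and recognize $f^P$. The paper compresses the Jacobian computation into a single line of the display, while you spell out the diffeomorphism $\psi_{\gamma'}$ and the identity $|\det(1-\Ad(\gamma)^{-1}|_\gn)| = \delta_P(\gamma)^{-1/2} D^G_M(\gamma)^{1/2}$; both of these are verified correctly, so the only difference is the level of detail.
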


\begin{proof}

Because $\gamma$ is $(G,M)$-regular, we have $I_\gamma \subset M$.  Let $\mu_\gamma^M = \mu_M^\text{can} / \mu_{I_\gamma}^\text{can}$.  We may parametrize $I_\gamma \backslash G$ in Langlands co-ordinates as $(I_\gamma \backslash M) N K$, which allows us to write
\begin{align*}
O_\gamma(f) & = \int_{I_\gamma \backslash G} f(x^{-1} \gamma x) d\mu_\gamma(x) \\
& = \int_{I_\gamma \backslash M} \int_N \int_K f(k^{-1} n^{-1} m^{-1} \gamma m n k ) d\mu_\gamma^M(m) dn dk \\
& = D^G_M(\gamma)^{-1/2} \delta_P(\gamma)^{1/2} \int_{I_\gamma \backslash M} \int_N \int_K f(k^{-1} m^{-1} \gamma m n k ) d\mu_\gamma^M(m) dn dk \\
& = D^G_M(\gamma)^{-1/2} O_\gamma^M(f^P),
\end{align*}
as desired.
\end{proof}

We shall need a version of Lemma \ref{descent} that can be applied to the singular functions $f \| X(\cdot) \|^{-\eta}$. This is Lemma \ref{descentsing} below, which we prove by adapting methods from \cite{MT}, where similar statements were established for $A_G\backslash\GL_n(\R)$. If $A \in GL(\g)$, define ${}^t\!A$ to be the transpose with respect to $\langle \cdot, \cdot \rangle_+$.  As in \cite[$\mathsection$4.2]{MT}, we define the functions $\cL, \cN: GL(\g) \rightarrow \R$ by $\cL(A) = \log( \tr( A\, {}^t\!A) / \dim \g )$ and $\cN(A) = \tr( A\, {}^t\!A) / \dim \g$.

\begin{lemma}
\label{LX}
We have $0 \leqslant \cL(\Ad(g)) \leqslant 2\| X(g) \|$ for all $g \in G$.
\end{lemma}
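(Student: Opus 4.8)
The plan is to extract everything from the Cartan decomposition $G = KA^0K$ and the fact that the inner product $\langle\cdot,\cdot\rangle_+$ is adapted to it. First I would record two elementary facts about $\langle\cdot,\cdot\rangle_+$. For $k\in K$ the operator $\Ad(k)$ is orthogonal for $\langle\cdot,\cdot\rangle_+$: since $\theta(k)=k$, the automorphism $\Ad(k)$ of $\g$ commutes with $\theta$ and preserves $\langle\cdot,\cdot\rangle$, hence it preserves $\langle u,v\rangle_+=-\langle\theta u,v\rangle$. For $Y\in\p$ the operator $\ad(Y)$ is self-adjoint for $\langle\cdot,\cdot\rangle_+$: using $\theta Y=-Y$ and the invariance of $\langle\cdot,\cdot\rangle$ one computes $\langle\ad(Y)u,v\rangle_+=-\langle\theta u,[Y,v]\rangle=\langle u,\ad(Y)v\rangle_+$. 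Consequently ${}^t\Ad(k)=\Ad(k)^{-1}$, and ${}^t\big(e^{\ad Y}\big)=e^{\ad Y}$ for $Y\in\p$.

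Now write $g=k_1e^{X(g)}k_2$ with $X(g)\in\overline{\ga}^+$. Using the two facts, $\Ad(g)\,{}^t\Ad(g)=\Ad(k_1)\,e^{2\ad X(g)}\,\Ad(k_1)^{-1}$, so that $\tr\big(\Ad(g)\,{}^t\Ad(g)\big)=\tr\big(e^{2\ad X(g)}\big)=\sum_{j=1}^{n}e^{2\mu_j}$, where $n=\dim\g$ and $\mu_1,\dots,\mu_n$ are the eigenvalues of the self-adjoint operator $\ad X(g)$, all real and listed with multiplicity; hence $\cN(\Ad(g))=\tfrac1n\sum_j e^{2\mu_j}$. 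Since $X(g)$ lies in $\ga$, these eigenvalues are the numbers $\alpha(X(g))$ with $\alpha$ a root of $\ga$ in $\g$, each with its multiplicity, together with $0$ occurring with multiplicity at least $\dim\ga\ge 1$ (when $\ga=0$ one has $X(g)\equiv 0$ and the statement is trivial); in particular the multiset $\{\mu_j\}$ is stable under $\mu\mapsto-\mu$. Pairing $\mu_j$ with $-\mu_j$ and using $e^{2\mu}+e^{-2\mu}\ge 2$ gives $\sum_j e^{2\mu_j}\ge n$, that is $\cN(\Ad(g))\ge 1$, which is the lower bound $\cL(\Ad(g))\ge 0$.

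For the upper bound, bounding the average by its largest term gives $\cN(\Ad(g))\le e^{2\mu_{\max}}$, where $\mu_{\max}=\max_j\mu_j\ge 0$, hence $\cL(\Ad(g))\le 2\mu_{\max}$. Since $\ad X(g)$ is self-adjoint, $\mu_{\max}$ is at most its operator norm for $\|\cdot\|$, and $\|\ad X(g)\|_{\text{op}}^2\le\sum_j\mu_j^2=\tr\big((\ad X(g))^2\big)$. It then remains to see that $\tr\big((\ad X(g))^2\big)\le\|X(g)\|^2$: writing $X(g)=X_0+X_1$ with $X_0\in[\g,\g]$ and $X_1$ in the centre, the two summands are $\langle\cdot,\cdot\rangle$-orthogonal (true for any invariant form), $\ad X(g)=\ad X_0$, and $\tr\big((\ad X_0)^2\big)\le\langle X_0,X_0\rangle\le\langle X(g),X(g)\rangle=\|X(g)\|^2$, the middle step being the statement that $\langle\cdot,\cdot\rangle$ restricted to $[\g,\g]$ dominates the trace form. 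Combining, $\cL(\Ad(g))\le 2\mu_{\max}\le 2\|X(g)\|$. The only delicate point is this last identification, where the normalization of the invariant form and the treatment of the split part of $Z(G)$ enter; the remainder is formal manipulation of the Cartan decomposition together with the spectral theorem, so the main obstacle is bookkeeping rather than ideas.
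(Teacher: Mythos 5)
Your proof is correct and takes essentially the same route as the paper's: compute $\Ad(g)\,{}^t\Ad(g)=\Ad(k_1 e^{2X(g)}k_1^{-1})$, expand $\cN(\Ad(g))$ as the normalized sum $\tfrac1n\sum_j e^{2\mu_j}$ over the spectrum of $\ad X(g)$, use the $\pm$-symmetry of that spectrum (AM--GM) for the lower bound, and use $\mu_{\max}\leqslant\|X(g)\|$ for the upper bound. You are somewhat more explicit than the paper, which simply asserts the inequality $\tr(\Ad(e^{2X(g)}))/\dim\g\leqslant e^{2\|X(g)\|}$ (citing Matz--Templier), whereas you derive it by chaining $\mu_{\max}^2\leqslant\tr\big((\ad X(g))^2\big)\leqslant\|X(g)\|^2$, and you correctly flag that the last comparison depends on the normalization of $\langle\cdot,\cdot\rangle$ relative to the trace form --- a point the paper leaves implicit.
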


\begin{proof}
We follow \cite[Lemma 4.2]{MT}. It may be seen that ${}^t\!\Ad(g) = \Ad(\theta(g))^{-1}$ for $g \in G$. We write $g \in G$ as $k_1 e^{X(g)} k_2$, so that $\Ad(g)\, {}^t\!\Ad(g) = \Ad(g \theta(g)^{-1}) = \Ad(k_1 e^{2X(g)} k_1^{-1})$. Taking traces gives
\[
\cN(\Ad(g)) = \tr( \Ad( e^{2X(g)}) )/ \dim \g \leqslant e^{2 \| X(g) \|},
\]
and taking logs gives the upper bound. One sees that $\tr( \Ad( e^{2X(g)}) ) \geqslant \dim \g$ by applying the arithmetic mean--geometric mean inequality to opposite pairs of root spaces of $\g$, which gives the lower bound.
\end{proof}

We let $P = MN$ be as above, and let $\gn$ be the Lie algebra of $N$.

\begin{lemma}
\label{Nbound}

Let $B_M \subset M$ and $B_\gn \subset \gn$ be compact.  There is $C > 0$ depending on $B_M$ and $B_\gn$ such that for all $m \in B_M$, $V \in B_\gn$, we have $\cN( \Ad(m e^V) ) \geqslant 1 + C \| V \|^2$.

\end{lemma}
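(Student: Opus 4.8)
The plan is to compute $\cN(\Ad(me^V))$ explicitly enough to extract the quadratic lower bound in $\|V\|$, using the same transpose trick as in Lemma \ref{LX}. First I would write $g = me^V$ and observe that $\Ad(g)\,{}^t\!\Ad(g) = \Ad(g\,\theta(g)^{-1})$, so that $\cN(\Ad(me^V)) = \tr\big(\Ad(me^V\theta(e^V)^{-1}\theta(m)^{-1})\big)/\dim\g$. Since $M$ is $\theta$-stable (it is semi-standard, hence contains $A$, and $\theta$ acts by $-1$ on $A$), this is conjugation by $m \cdot e^V\theta(e^V)^{-1}\cdot\theta(m)^{-1}$; expanding $e^V\theta(e^V)^{-1} = e^V e^{-\theta(V)}$ and noting $\theta$ preserves $\gn^- \oplus \gm \oplus \gn$ (swapping $\gn$ and $\gn^-$), the element $\theta(V) \in \gn^-$. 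I would then use that $\tr(\Ad(xyx^{-1})) = \tr(\Ad(y))$ to reduce to bounding $\tr(\Ad(e^V e^{-\theta V}))/\dim\g$ from below by $1 + C\|V\|^2$ uniformly for $V$ in the compact set $B_\gn$ — the factor $m \in B_M$ only contributes via conjugation, which leaves the trace unchanged, so the constant $C$ ends up depending only on $B_\gn$ (and $B_M$ enters only through the reduction, or not at all).

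Next, the core estimate: for $V \in \gn$, the element $Y := \log(e^V e^{-\theta V})$ has the form $Y = V - \theta V + (\text{higher brackets})$, where $V - \theta V \in \gn \oplus \gn^-$ is nonzero whenever $V \neq 0$, and in fact $\|V - \theta V\|^2 = \|V\|^2 + \|\theta V\|^2 = 2\|V\|^2$ since $\|\theta V\| = \|V\|$ for the norm $\|\cdot\|$ coming from $\langle\cdot,\cdot\rangle_+$. Then $\tr(\Ad(e^V e^{-\theta V})) = \tr(e^{\ad Y}) = \sum_{j} \lambda_j$ where $\lambda_j$ are eigenvalues of $\ad Y$ on $\g_\C$; pairing each $\lambda$ with $-\lambda$ (the spectrum of $\ad Y$ is symmetric since $Y$ is real and $\langle\cdot,\cdot\rangle_+$-related, or more simply because $\Ad(g){}^t\Ad(g)$ is positive-definite symmetric so its eigenvalues come in reciprocal pairs $e^{\pm t}$) gives $\tr(\Ad(e^V e^{-\theta V})) = \sum_{\text{pairs}} (e^{t_i} + e^{-t_i}) + (\text{zero eigenvalues}) \geqslant \dim\g + \sum_i t_i^2$, using $e^t + e^{-t} \geqslant 2 + t^2$. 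It then remains to bound $\sum_i t_i^2 = \tfrac12\tr\big((\log \Ad(g){}^t\!\Ad(g))^2\big)$ from below by $c\|V\|^2$ for $V$ in the compact set $B_\gn$; since this quantity is a smooth function of $V$ vanishing to exactly order $2$ at $V = 0$ (its Hessian at $0$ is $V \mapsto \tr((\ad(V - \theta V))^2) \asymp -\|V-\theta V\|^2 = -2\|V\|^2$ up to the sign convention, and it is strictly positive for $V \neq 0$ because $\Ad(g){}^t\!\Ad(g) \neq 1$ whenever $g \notin K$, i.e. whenever $V \neq 0$), a compactness argument on $B_\gn$ yields a uniform constant $C > 0$ with $\cN(\Ad(me^V)) \geqslant 1 + C\|V\|^2$.

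The main obstacle I anticipate is making the ``vanishes to order exactly $2$, strictly positive elsewhere'' argument fully uniform over the compact set $B_\gn$ — i.e. ruling out that the constant $C$ degenerates as $V$ approaches the boundary of $B_\gn$ or as the quadratic form degenerates in some direction. The cleanest way around this is to avoid Taylor expansion entirely: set $\Phi(V) = \cN(\Ad(e^V)) - 1$, note $\Phi$ is continuous and nonnegative on $\g$ (by Lemma \ref{LX}'s lower-bound argument applied with $g = e^V$, $X(g) = V$, since $e^V \theta(e^V)^{-1} = \Ad(e^{2V})$ — wait, here $V \in \gn$ not $\p$, so instead use directly that $\Ad(e^V){}^t\!\Ad(e^V)$ is positive-definite with trace $\geqslant \dim\g$, equality iff it equals the identity iff $e^V \in K$ iff $V = 0$), so $\Phi(V) > 0$ for $V \neq 0$; then $\Phi(V)/\|V\|^2$ extends continuously to $V = 0$ with a positive value (this is where one Taylor-expands once, at the origin only, which is harmless), hence attains a positive minimum $C$ on the compact set $\overline{B_\gn} \setminus \{0\}$, and the inclusion of the factor $m \in B_M$ is then handled by the conjugation-invariance of the trace as noted above. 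This reduces the whole lemma to an elementary compactness argument plus the one local computation at the origin.
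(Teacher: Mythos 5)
Your reduction of $\cN(\Ad(me^V))$ to $\cN(\Ad(e^Ve^{-\theta V}))$ is where the argument breaks. You correctly note $\Ad(g)\,{}^t\!\Ad(g) = \Ad(g\theta(g)^{-1})$ with $g\theta(g)^{-1} = m\,(e^Ve^{-\theta V})\,\theta(m)^{-1}$ for $g = me^V$, but you then treat $m\,x\,\theta(m)^{-1}$ as a conjugate of $x$. That holds only if $\theta(m)=m$, i.e.\ only for $m \in K\cap M$. A proper semi-standard Levi $M$ contains the non-compact torus $A_L$, so a generic $m \in B_M$ is not $\theta$-fixed, and the trace $\tr(\Ad(m\,x\,\theta(m)^{-1}))$ genuinely depends on $m$. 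Concretely, take $G = \SL_2(\R)$, $M=A$, $m=\mathrm{diag}(a,a^{-1})$ (so $\theta(m)=m^{-1}$), and $V = tE$; then
\[
\cN(\Ad(me^V)) = \tfrac{1}{3}\bigl(1 + a^4 + a^{-4} + 2t^2 + 2a^4 t^2 + a^4 t^4\bigr),
\]
which differs from $\cN(\Ad(e^V)) = \tfrac{1}{3}(3 + 4t^2 + t^4)$ already at $t=0$ whenever $a\neq 1$. So the identity ``the factor $m$ only contributes via conjugation'' is false, the constant $C$ cannot depend only on $B_\gn$, and your closing compactness argument — which controls $\Phi(V) = \cN(\Ad(e^V))-1$ on $B_\gn$ alone — never addresses $m$ at all.

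The lemma is still true, but the way to separate the contributions of $m$ and $V$ is a different structural fact, not the transpose trick. In an orthonormal basis of $\g$ adapted to the root-space decomposition for $\ga$, the operator $\Ad(m)$ preserves the $\ga_L$-grading (it commutes with $\ad(H)$ for $H\in\ga_L$), while $\Ad(m)(\Ad(e^V)-1)$ strictly raises it since $V\in\gn$; hence the nonzero matrix entries of $\Ad(m)$ and of $\Ad(m)(\Ad(e^V)-1)$ occupy disjoint positions, and $\cN$ — being a sum of squares of entries — splits additively: $\cN(\Ad(me^V)) = \cN(\Ad(m)) + \cN\bigl(\Ad(m)(\Ad(e^V)-1)\bigr) \geqslant 1 + \cN\bigl(\Ad(m)(\Ad(e^V)-1)\bigr)$. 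One then Taylor-expands at $V=0$ and invokes compactness over $B_M\times\{\,V \in \gn : \|V\|=1\,\}$, obtaining a constant $C$ that genuinely depends on both $B_M$ and $B_\gn$. This additive decomposition of $\cN$ is the missing ingredient; without it, the transpose identity gives $\cN \geqslant 1$ but no mechanism to produce the $\|V\|^2$ term uniformly in $m$.
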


\begin{proof}

Choose a basis for $\g$ subordinate to the root space decomposition $\g = Z_\g(\ga) \oplus \bigoplus_{\alpha \in \Delta} \g_\alpha$ that is orthonormal with respect to $\langle \cdot, \cdot \rangle_+$, where $\Delta$ are the roots of $\ga$ in $\g$.  It may be seen that the nonzero entries of $\Ad(m)$ and $\Ad(m)( \Ad(e^V) - 1)$ with respect to this basis are disjoint, so we have
\bes
\cN( \Ad(me^V)) = \cN( \Ad(m)) + \cN( \Ad(m)( \Ad(e^V) - 1) ) \geqslant 1 + \cN( \Ad(m)( \Ad(e^V) - 1) ).
\ees
It follows that if $\cN(\Ad(me^V)) = 1$ then $\cN( \Ad(m)( \Ad(e^V) - 1) ) = 0$, so $V = 0$. By a compactness argument, we may therefore assume that $\| V \|$ is less than an arbitrarily small constant.  We deal with small $V$ by applying the Taylor expansion of $e^{V}$, which gives
\begin{align*}
\Ad(m)( \Ad(e^{V}) - 1) & = \Ad(m)\ad(V) + O_{B_M}( \| V \|^2) \\
\Ad(m)( \Ad(e^{V}) - 1)\, {}^t[\Ad(m)( \Ad(e^{V}) - 1)] & = \Ad(m)\ad(V)\, {}^t[\Ad(m)\ad(V)] + O_{B_M}( \| V \|^3) \\
\cN( \Ad(m)( \Ad(e^{V}) - 1) ) & = \cN( \Ad(m) \ad(V) ) + O_{B_M}( \| V \|^3 ).
\end{align*}
By compactness, we have $\cN( \Ad(m) \ad(V)) \geqslant C > 0$ for all $\| V \| = 1$ and $m \in B_M$.  As $\cN$ is quadratic, we have $\cN( \Ad(m) \ad(V)) \geqslant C \| V \|^2$ for all $V$ and $m \in B_M$, which completes the proof.
\end{proof}

We next show that the parabolic descent integral of $f \| X( \cdot ) \|^{-\eta}$ along $P$ converges and defines a bounded function on $M$.

\begin{lemma}
\label{descentreg}
Let $f \in C_c(G)$, and for $0 < \eta < 1/2$ define $F_\eta = f \| X( \cdot ) \|^{-\eta}$. Then
\begin{enumerate}[(i)]

\item the integral defining $F^P_\eta(m)$ converges absolutely for any $m \in M$;

\item there exists $f_1 \in C_c(M)$ depending only on $f$ and $\eta$ such that $| F^P_\eta(m) | \leqslant  f_1(m)$ for all $m \in M$.

\end{enumerate}

\end{lemma}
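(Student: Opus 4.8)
The plan is to analyze the descent integral
\[
F^P_\eta(m)=\delta_P^{1/2}(m)\int_N\int_K f(k^{-1}mnk)\,\|X(k^{-1}mnk)\|^{-\eta}\,dn\,dk
\]
by exploiting two structural features. First, $X$ is invariant under conjugation by $K$: since $k^{-1}Ke^{X(g)}Kk=Ke^{X(g)}K$ for $k\in K$, one has $X(k^{-1}gk)=X(g)$, so the factor $\|X(k^{-1}mnk)\|^{-\eta}=\|X(mn)\|^{-\eta}$ is independent of $k$ and can be pulled out of the $K$-integral. Second, I would parametrize $N$ by $\gn$ via $\exp$ (a diffeomorphism with smooth positive Jacobian $J$, hence bounded on compact sets), write $n=\exp V$, and set $\Phi_m(V)=\int_K f(k^{-1}m\exp(V)k)\,dk$, which is continuous, bounded by $\|f\|_\infty$ (recall ${\rm vol}(K)=1$), and vanishes unless $m\exp V$ lies in the compact set $C_1=\bigcup_{k\in K}k\,{\rm supp}(f)\,k^{-1}$. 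Since $P=MN$ is a closed subgroup of $G$ and $M\times\gn\to P$, $(m,V)\mapsto m\exp V$, is a homeomorphism, the preimage of the compact set $C_1\cap P$ is compact, hence contained in $B_M\times B_\gn$ for compact $B_M\subset M$, $B_\gn\subset\gn$. In particular $F^P_\eta(m)=0$ for $m\notin B_M$, so (i) and (ii) are trivial there; for $m\in B_M$ the problem reduces, up to the factors $\delta_P^{1/2}(m)$ and $J(V)$ (bounded on $B_M$, $B_\gn$), to estimating $\int_{B_\gn}\|X(m\exp V)\|^{-\eta}\,|\Phi_m(V)|\,dV$.

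The substantive step is a uniform quadratic lower bound for $\|X(m\exp V)\|$ on $B_M\times B_\gn$. Here I would invoke Lemma \ref{LX}, which gives $\|X(g)\|\geqslant\tfrac12\cL(\Ad(g))=\tfrac12\log\cN(\Ad(g))$, together with Lemma \ref{Nbound} applied to the compact sets $B_M$ and $B_\gn$, which produces $c>0$ with $\cN(\Ad(m\exp V))\geqslant 1+c\|V\|^2$ for $m\in B_M$, $V\in B_\gn$. Since $t\mapsto\log(1+ct)/t$ is positive and bounded below on the bounded interval $(0,\sup_{V\in B_\gn}\|V\|^2]$, these combine to $\|X(m\exp V)\|\gg\|V\|^2$ uniformly on $B_M\times B_\gn$, and hence $\|X(m\exp V)\|^{-\eta}\ll\|V\|^{-2\eta}$. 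This is the only place where the geometry of the unipotent radical $N$ enters, and I expect it, rather than any genuine obstacle, to be the crux of the argument; the supporting lemmas do all the real work.

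It then remains to integrate and collect constants. One has $\int_{B_\gn}\|V\|^{-2\eta}\,dV<\infty$ exactly when $2\eta<\dim\gn$; since $P$ is a proper parabolic we have $\dim\gn\geqslant 1$, and the hypothesis $\eta<1/2$ gives $2\eta<1\leqslant\dim\gn$, so the integral converges. This is precisely where $\eta<1/2$ is used, and is really the content of the lemma. Feeding this back through the first paragraph proves the absolute convergence in (i), and yields $|F^P_\eta(m)|\leqslant c(\eta,f)$ for $m\in B_M$ (after absorbing $\delta_P^{1/2}$ and $J$ over these compact sets) and $F^P_\eta(m)=0$ otherwise, where $c(\eta,f)$ depends only on ${\rm supp}(f)$, $\|f\|_\infty$, $\eta$ and the fixed $P$. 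For (ii) I would finish by choosing, via Urysohn's lemma, any $f_1\in C_c(M)$ with $f_1\geqslant 0$ and $f_1\geqslant c(\eta,f)$ on $B_M$; since $B_M$, $C_1$ and $c(\eta,f)$ depend only on $f$ and $\eta$, so does $f_1$. The only mildly delicate bookkeeping is checking that every implied constant is genuinely independent of $m$, which is automatic once the compact sets $B_M$, $B_\gn$ have been fixed.
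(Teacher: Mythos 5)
Your proposal is correct and takes essentially the same approach as the paper: both reduce to the integral over $\gn$ using compactness of the support, bound $\|X(me^V)\|$ from below via Lemma \ref{LX} and Lemma \ref{Nbound} to get $\|X(me^V)\|^{-\eta}\ll\|V\|^{-2\eta}$, and then use $2\eta<1\leqslant\dim\gn$ to conclude. The only cosmetic difference is that you exploit the $K$-conjugation-invariance of $X$ directly, whereas the paper instead reduces to $f$ bi-$K$-invariant before pulling out the $K$-integral.
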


\begin{proof}
We are free to replace $f$ by its absolute value, so that $f \geqslant 0$.  It follows that $F^P_\eta(m)$ is defined everywhere as a value in $[0,\infty]$.  We may also assume that $f$ is bi-$K$-invariant.  Let $B \subset G$ be a compact set containing $\text{supp}(f)$. There are compact sets $B_M \subset M$ and $B_\gn \subset \gn$ depending only on $B$ such that $m e^V \in B$ implies $m \in B_M$ and $V \in B_\gn$.  It follows that $\text{supp}(F^P_\eta) \subset B_M$.

Lemma \ref{LX} implies
\bes
F_\eta^P(m) \delta_P^{-1/2}(m) = \int_N f(mn) \| X(mn) \|^{-\eta} dn \ll \int_N f(mn) \cL( \Ad(mn) )^{-\eta} dn.
\ees
Writing $n = e^V$ and applying Lemma \ref{Nbound} gives
\bes
F_\eta^P(m) \delta_P^{-1/2}(m) \ll \int_{\gn} f(m e^V) \| V \|^{-2\eta} dV.
\ees
In view of our assumption $0<\eta<1/2$, the right-hand side is bounded by $C(B, \eta)\|f \|_\infty$.  This proves (i), and (ii) follows by combining this with $\text{supp}(F^P_\eta) \subset B_M$.
\end{proof}

Combining Lemmas \ref{descent} and \ref{descentreg} gives:

\begin{lemma}
\label{descentsing}
Let $f \in C_c(G)$, and for $0 < \eta < 1/2$ define $F_\eta = f \| X( \cdot ) \|^{-\eta}$.  If $\gamma \in M$ is $(G,M)$-regular, then the integral $O_\gamma( F_\eta )$ converges absolutely, and we have
\be
\label{descentsingeq}
D^G_M(\gamma)^{1/2} O_\gamma( F_\eta ) = O_\gamma^M(F_\eta^P).
\ee
\end{lemma}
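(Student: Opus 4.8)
The plan is to bootstrap from the descent identity for continuous test functions, Lemma \ref{descent}, by truncating the singularity of $F_\eta$ and using Lemma \ref{descentreg} to control the resulting limit. First I would reduce to the case $f \geqslant 0$, replacing $f$ by $|f|$; this also takes care of the absolute convergence claim, since once $O_\gamma(|f|\,\|X(\cdot)\|^{-\eta})$ is shown finite the original integral converges absolutely. (One may also assume $f$ is bi-$K$-invariant, as in the proof of Lemma \ref{descentreg}, though this is not needed here.)

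For $n \geqslant 1$ put $F_\eta^{(n)} = f \cdot \min(\|X(\cdot)\|^{-\eta}, n)$. Since $g \mapsto \|X(g)\|$ is continuous and vanishes exactly on $K$, the factor $\min(\|X(\cdot)\|^{-\eta}, n)$ is continuous (equal to the constant $n$ on a neighbourhood of $K$), so $F_\eta^{(n)} \in C_c(G)$ and $F_\eta^{(n)} \uparrow F_\eta$ pointwise. Applying Lemma \ref{descent} to each $F_\eta^{(n)}$ gives $D^G_M(\gamma)^{1/2} O_\gamma(F_\eta^{(n)}) = O_\gamma^M((F_\eta^{(n)})^P)$. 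On the left, monotone convergence yields $O_\gamma(F_\eta^{(n)}) \uparrow O_\gamma(F_\eta)$ in $[0,\infty]$. Since the parabolic descent integrates a nonnegative function over $N \times K$, monotone convergence also gives $(F_\eta^{(n)})^P \uparrow F_\eta^P$ pointwise, and hence $O_\gamma^M((F_\eta^{(n)})^P) \uparrow O_\gamma^M(F_\eta^P)$, again in $[0,\infty]$.

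Finally I would invoke Lemma \ref{descentreg}: $F_\eta^P$ is everywhere finite and dominated by some $f_1 \in C_c(M)$, so $O_\gamma^M(F_\eta^P) \leqslant O_\gamma^M(f_1) < \infty$ --- the finiteness of the orbital integral on $M$ of a compactly supported continuous function at the semisimple element $\gamma$ being standard (the orbit is closed, and $I_\gamma \subset M$ since $\gamma$ is $(G,M)$-regular), or a consequence of Proposition \ref{boundedint} applied to $M$, whose semisimple rank is strictly smaller. Thus both sides of the displayed identity have finite limits, which simultaneously establishes the absolute convergence of $O_\gamma(F_\eta)$ and the equality \eqref{descentsingeq}. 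The one point requiring care --- rather than a genuine obstacle --- is that one cannot assume beforehand that $F_\eta$ is integrable against the orbital measure, so every interchange of limits must be run with nonnegative integrands (monotone convergence) and finiteness extracted only at the end from Lemma \ref{descentreg}; an equivalent route is to rerun the Langlands-coordinate computation of Lemma \ref{descent} directly, replacing Fubini by Tonelli.
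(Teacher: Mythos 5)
Your argument is correct and is essentially the paper's proof: truncate the singularity so that the test function lands in $C_c(G)$, apply Lemma \ref{descent}, and pass to the limit via monotone convergence, with Lemma \ref{descentreg} supplying the needed finiteness. The one point the paper spells out more explicitly is the passage from $f \geqslant 0$ to general $f$, which it does by rerunning the truncation argument with dominated (rather than monotone) convergence against the dominators $\widetilde F_\eta = |f|\,\|X(\cdot)\|^{-\eta}$ and $\widetilde F_\eta^P$, whereas you assert the reduction without giving the mechanism; either that or a decomposition of $f$ into nonnegative pieces closes the small gap.
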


We return now to the proof of Proposition \ref{blowupint}. Let $0 < \eta < 1/2$, and let $f \in C_c(G)$ and $F_\eta = f \| X( \cdot ) \|^{-\eta}$.  We assume that $f \geqslant 0$.  As $\gamma$ is not elliptic, we may let $M$ be a proper Levi subgroup with $G_\gamma \subset M$.  Because $G_\gamma \subset M$, $\Ad(\gamma) - 1$ must be invertible on $\g / \text{Lie}(M)$, so that $\gamma$ is $(G,M)$-regular.  Let $P$ be a parabolic with Levi $M$.  By conjugation, we may assume that $P$ and $M$ are semi-standard. We apply Lemma \ref{descentreg} to $F_\eta$ to obtain $f_1 \in C_c(M)$ such that $F^P_\eta(m)\leqslant  f_1(m)$ for all $m \in M$.  Lemma \ref{descentsing} gives 
\bes
O_\gamma^G(F_\eta) = D^G_M(\gamma)^{-1/2} O_\gamma^M(F_\eta^P) \leqslant  D^G_M(\gamma)^{-1/2} O_\gamma^M(f_1).
\ees
From this and Proposition \ref{boundedint}, applied on $M$, we obtain
\bes
O_\gamma^G(F_\eta) < c(\eta,f) D^G_M(\gamma)^{-1/2} D^M(\gamma)^{-1/2} < c'(\eta,f) D^G(\gamma)^{-1/2},
\ees
as desired.

\subsection{The case of $\gamma$ elliptic}
\label{ROIell}

We first observe that, if $\gamma \in G$ is elliptic, then it is conjugate to an element of $K$.  This is because if $Z(G_\gamma)$ is anisotropic then it is compact and therefore conjugate to a subgroup of $K$.

We shall assume $\gamma \in K-G_{\rm cpt}$, and note that for such $\gamma$ we have $D(\gamma)\ll 1$, where the implied constant depends only on $G$.  For $\epsilon > 0$ we put $K(\epsilon) = \{ g \in G : \| X(g) \| < \epsilon \}$ and write $1_{K(\epsilon)}$ for the characteristic function of $K(\epsilon)$.  Proposition \ref{blowupint} will follow from a bound for $O_\gamma(1_{K(\epsilon)})$ that is uniform in $\epsilon$ and $D(\gamma)$, as provided by the following lemma.

\begin{lemma}
\label{tubeint}
For all $\gamma \in K -  G_{\rm cpt}$ and $0< \epsilon  \leqslant 2 D(\gamma)^{1/2}$, we have $O_\gamma(1_{K(\epsilon)}) \ll \epsilon^{d_\gamma} D(\gamma)^{-d_\gamma/2}$, where $d_\gamma\geqslant 1$ is the codimension of $I_\gamma K$ in $G$.
\end{lemma}

\begin{proof}

We first convert the problem to one on the symmetric space $S = G/K$. Let $ds$ be the metric tensor on $S$ associated with the norm $\| \cdot \|$ on $\p$.  We note that the distance function $d_S$ attached to $ds$ is given by $d_S(g_1, g_2) = \| X(g_1^{-1} g_2) \|$.

We observe that $O_\gamma(1_{K(\epsilon)}) = \mu_\gamma (I_\gamma \backslash G(\epsilon))$, where $G(\epsilon) = \{ x \in G : x^{-1} \gamma x \in K(\epsilon) \}$. The set $G(\epsilon)$ is right $K$-invariant, and $G(\epsilon) / K$ is the set of points $x \in S$ such that $d_S(x, \gamma x) < \epsilon$. Moreover, $G(\epsilon)$ is left $I_\gamma$-invariant, and we will see it is roughly a tube around $I_\gamma$.  Bounding the volume of $I_\gamma \backslash G(\epsilon)$ is therefore roughly equivalent to finding the radius of this tube, in a way which we now make precise.

Because $\theta(\gamma) = \gamma$, $\g_\gamma$ is $\theta$-stable.  We may then write $\g_\gamma = \p_\gamma + \gk_\gamma$ where $\gk_\gamma = \gk \cap \g_\gamma$ and $\p_\gamma = \p \cap \g_\gamma$.  Let $\p_\gamma^\perp$ be the orthocomplement of $\p_\gamma$ in $\p$. We note that $\gamma\notin G_{\rm cpt}$ implies that $\p_\gamma \neq \p$.  Indeed, if $\p_\gamma = \p$ then $\Ad(\gamma)$ fixes $\p$, and hence $[\p,\p]$.  However, $\p + [\p,\p]$ is the product of the $\R$-simple factors of $\g$ of noncompact type (see \cite[Ch. V, Thm 1.1]{He1} and the subsequent proof). In particular, the codimension $d_\gamma$ of $I_\gamma K$ in $G$ is at least $1$.  

Let $x \in G(\epsilon)$. By \cite[Ch. VI, Thm 1.4]{He1}, we may write $x = e^{X_\gamma} e^{X^\gamma} k$ with $X_\gamma \in \p_\gamma$, $X^\gamma \in \p_\gamma^\perp$, and $k \in K$.  The condition $x^{-1} \gamma x \in K(\epsilon)$ simplifies to $e^{-X^\gamma} \gamma e^{X^\gamma} \in K(\epsilon)$, which (since $\gamma\in K)$ is equivalent to $e^{-X^\gamma} e^{\Ad(\gamma) X^\gamma} \in K(\epsilon)$.  This implies that $d_S(e^{X^\gamma}, e^{\Ad(\gamma) X^\gamma}) <\epsilon$: the element $e^{X^\gamma}$ is rotated by $\gamma$ by distance at most $\epsilon$. The Cartan--Hadamard theorem \cite[Ch. I, Thm 13.1]{He1} then implies that $\| \Ad(\gamma) X^\gamma - X^\gamma \| < \epsilon$.  Because all eigenvalues of $\Ad(\gamma) - 1$ on $\p_\gamma^\perp$ have absolute value at most 2, and they occur in complex conjugate pairs, their absolute value is at least $C D(\gamma)^{1/2}$ for $C > 0$ depending only on $G$.  It follows that $\|X^\gamma\| \leqslant C \epsilon D(\gamma)^{-1/2}$. If we let $B_{\p_\gamma^\perp}(r)$ be the ball of radius $r$ around 0 in $\p_\gamma^\perp$ with respect to $\| \cdot \|$, it follows that $G(\epsilon) \subset I_\gamma \exp (B_{\p_\gamma^\perp}(r_0))  K$ with $r_0 = C \epsilon D(\gamma)^{-1/2} \leqslant 2C$.

Let $I_\gamma^c \subset I_\gamma$ be a compact set such that $\mu_{I_\gamma}^{\rm can}(I_\gamma^c)=1$. Then from the preceding paragraph we deduce that
\bes
\mu_\gamma (I_\gamma \backslash G(\epsilon)) \leqslant  \mu_G( I_\gamma^c \exp (B_{\p_\gamma^\perp}(r_0))   K).
\ees
Let $B_\p(r)$ denote the ball of radius $r$ around $0$ in $\p$ with respect to $\|\cdot\|$.  We have
\[
I_\gamma^c \exp (B_{\p_\gamma^\perp}(r_0))K \subset I_\gamma^c \exp (B_\p(r_0))K = I_\gamma^c K \exp (B_\p(r_0)).
\]
Because $I_\gamma^c K$ is compact and contained in $I_\gamma K$, the result follows from the expression for $r_0$, provided we show that our bounds do not depend on the choice of $I_\gamma^c$.

We show that, after conjugating $\gamma$ in $G$ to another element of $K$, there are only finitely many possibilities for $I_\gamma$\footnote{We have already shown in the proof of Proposition \ref{boundedint} that there are finitely many possibilities for $I_\gamma$ after conjugating $\gamma$ in $G$, but we need this stronger statement.}. Let $T_1, \ldots, T_n$ be representatives for the conjugacy classes of maximal $\R$-tori in $G$.  If $T_i^c$ denotes the maximal compact subgroup of $T_i(\R)$, we may assume that $T_i^c \subset K$ for all $i$.  As we are free to replace $\gamma$ by any conjugate, we may assume that $\gamma \in T_i$ for some $i$, which implies that $\gamma \in T_i^c \subset K$.  There are now finitely many possibilities for $I_\gamma$.
\end{proof}

We now return to the proof of Proposition \ref{blowupint}. Choose a compact set $B$ containing $\text{supp}(f)$, so that
\bes
f \| X( \cdot) \|^{-\eta} \ll_f 1_{B} + \sum_{k=1}^\infty 2^{\eta k} 1_{K(2^{-k})}.
\ees
We deduce that
\bes
O_\gamma(f \| X( \cdot) \|^{-\eta}) \ll_f O_\gamma(1_{B}) + \sum_{k=1}^\infty 2^{\eta k} O_\gamma(1_{K(2^{-k})}).
\ees
An application of Proposition \ref{boundedint} and Lemma \ref{tubeint} yields
\bes
O_\gamma( f \| X( \cdot) \|^{-\eta})\ll_f D(\gamma)^{-1/2} + \sum_{2^{k+1} \geqslant D(\gamma)^{-1/2}} 2^{\eta k - k} D(\gamma)^{-1/2} + \sum_{2^{k+1} < D(\gamma)^{-1/2}} 2^{\eta k} D(\gamma)^{-1/2}.
\ees
Our assumption that $0 < \eta < 1/2$ implies that both geometric series are bounded by $D(\gamma)^{-3/4}$, completing the proof of Proposition \ref{blowupint}.

\appendix

\section{Distinction principles and the $C^\infty$ spectrum of symmetric varieties}\label{sym-var}

There are several methods that may be used to prove power growth of eigenfunctions on arithmetic manifolds. The original proof of Rudnick and Sarnak uses a distinction principle.  This means that, for certain period integrals, if an automorphic form $\phi$ has a nonzero period then $\phi$ is exceptional in some sense, which can mean being a transfer from a smaller group, or being nontempered.

In this section, we review the method of Rudnick--Sarnak and describe an alternative approach to proving Theorems \ref{thm1} and \ref{thm2} which retains much of the spirit of their approach. It is based on a theorem of Sakellaridis on the unramified $C^\infty$ spectrum of spherical varieties, which we apply in the case of symmetric varieties. In short, we replace the global distinction principle of Rudnick--Sarnak which implies being in the image of a functorial lift by a local distinction principle which implies having non-tempered Satake parameters. What both of these implied properties have in common is that they are rare. Counting arguments can then be used to produce power growth.

\subsection{The global distinction argument}\label{RudSar}

We illustrate the distinction argument in a special case, taken from Rudnick and Sarnak's proof \cite{RS}.

Let $Q$ be the quadratic form $Q(x) = x_1^2 + x_2^2 + x_3^2 - 7x_4^2$.  If we let $V = \{ x \in \R^4 : Q(x) = -1\}$, then $V$ is a two-sheeted hyperboloid and the upper sheet is a model for $\mathbb{H}^3$.  If we let $\Gamma$ be the intersection of ${\rm O}(Q,\Z)$ with the identity component of ${\rm O}(Q, \R)$, then $Y = \Gamma \backslash \mathbb{H}^3$ is a compact hyperbolic 3-manifold.  The distinction result that Rudnick and Sarnak prove is that if $\psi \in L^2(Y)$ is orthogonal to all theta lifts of cusp forms of weight 1 on $\Gamma_1(28)$, then $\psi( (2,1,1,1)) = 0$.  The result then follows from the local Weyl law and a counting argument.  Indeed, the local Weyl law says that the average size of $|\psi((2,1,1,1))|^2$ must be 1.  However, the number of eigenfunctions on $X$ with eigenvalue $\lambda \leqslant R$ is roughly $R^3$, while the number of theta lifts in this range is roughly $R^2$.  Because the number of nonvanishing eigenfunctions is small, their values must be large to make up the right average.

The generalisation of this principle, namely that an automorphic form on $\SO(n,1)$ that is orthogonal to theta lifts from $\SL_2$ must have vanishing $\SO(n)$ periods, was used by Donnelly \cite{Do}.  It is likely that this could be used to prove Theorem \ref{thm1} on other groups of the form $\SO(m,n)$, $\U(m,n)$, or $\Sp(m,n)$.  Another distinction principle that one could apply is due to Jacquet \cite{J} (and later refined by Feigon, Lapid, and Offen in \cite{FLO, LO}), which states that a form on $\GL(n,\C)$ with a nonvanishing $\U(n)$ period must come from quadratic base change.  See \cite{JLR} for a general discussion of these ideas.

\subsection{Symmetric varieties}\label{background}

Let $F$ be a field of characteristic 0.  A symmetric variety over $F$ is a variety $X = G/H$ where $G$ is a reductive $F$-group, $\theta$ is an involution of $G$ over $F$, and $H$ is an open $F$-subgroup of the fixed point group $G^\theta$.  We refer the reader to \cite{HW,Vu} for background on these varieties.

If $S \subset G$ is a $\theta$-stable torus, let $S^+$ and $S^-$ be the neutral component of $S^\theta$ and $\{ x \in S : \theta(x) = x^{-1} \}$ respectively, so that $S^+ \times S^- \to S$ is an isogeny.  A $\theta$-stable torus $S$ is said to be $(F,\theta)$-split if it is $F$-split and $\theta$ acts on it by inversion.  A parabolic subgroup $P$ of $G$ is said to be $\theta$-split if $P$ and $\theta P$ are opposed. In this case, $P \cap \theta(P)$ is the unique $\theta$-stable Levi of both $P$ and $\theta P$.  All maximal $(F,\theta)$-split tori, and all minimal $\theta$-split parabolic $F$-subgroups, are conjugate under $G(\overline{F})$.  We define the $\theta$-split rank of $G$ to be the common dimension of such tori.  We say that $G$ is $\theta$-split if its $\theta$-split rank is equal to its absolute rank, that is if $G$ contains a $(F,\theta)$-split maximal torus. We say that $G$ is $\theta$-quasi-split if $G$ contains a $\theta$-split Borel defined over $F$. In this case, $B\cap\theta (B)$ is a $\theta$-stable torus, which may or may not be $\theta$-split.

{\it From now on we shall assume $G$ split over $F$.}  The following lemma then implies that $G$ is $\theta$-(quasi-)split if and only if $G \times \overline{F}$ is.

\begin{lemma}\label{alg-clo}

A maximal $(F,\theta)$-split torus in $G$ is also a maximal $\theta$-split torus in $G \times \overline{F}$, and a minimal $\theta$-split $F$-parabolic in $G$ is also a minimal $\theta$-split parabolic in $G \times \overline{F}$.

\end{lemma}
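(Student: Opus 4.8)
The plan is to reduce both assertions to the statement that $\theta$-split tori and $\theta$-split parabolics cannot become strictly larger or smaller, respectively, after base change to $\overline F$, using the fact that $G$ is already split over $F$. I would treat the two claims in parallel, as they are dual to one another.

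First I would handle the torus. Let $S \subset G$ be a maximal $(F,\theta)$-split torus; by definition $S$ is $F$-split and $\theta$ acts on it by inversion, so a fortiori $S \times \overline F$ is a $\theta$-split torus in $G \times \overline F$. Suppose for contradiction that it is not maximal among such, so there is a $\theta$-split torus $S' \supsetneq S \times \overline F$ in $G \times \overline F$. The centralizer $Z = Z_G(S)$ is a $\theta$-stable Levi $F$-subgroup (it is the $\theta$-stable Levi of a minimal $\theta$-split parabolic); since $G$ is $F$-split, $Z$ is $F$-split, hence contains a maximal $F$-split torus $T \supseteq S$ with $T$ defined over $F$. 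Because $S$ is a maximal $(F,\theta)$-split torus, the $(-1)$-eigenspace of $\theta$ on the cocharacter lattice $X_*(T)$ is exactly $X_*(S)$ — otherwise one could enlarge $S$ within $T$ over $F$. But $\theta$ acts on $X_*(T)$ by a \emph{fixed} integer matrix that does not change under base change, so the $(-1)$-eigenlattice in $X_*(T \times \overline F) = X_*(T)$ is still $X_*(S)$; any $\theta$-split torus in $Z \times \overline F$ containing $S \times \overline F$ and contained in $T \times \overline F$ therefore equals $S \times \overline F$. Finally one reduces the general $S'$ to this case: $S'$ centralizes $S \times \overline F$ (both lie in the $(-1)$-eigenspace picture inside a common maximal torus of $G \times \overline F$ after conjugation — more precisely, $S' \subseteq Z_{G \times \overline F}(S \times \overline F) = Z \times \overline F$ since $S' \supseteq S \times \overline F$ and tori in a connected group containing a given torus lie in its centralizer), and then the previous paragraph applies.

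For the parabolic, I would argue dually. Let $P$ be a minimal $\theta$-split $F$-parabolic, with $\theta$-stable Levi $M = P \cap \theta(P)$, and let $S$ be the maximal $(F,\theta)$-split torus in the center of $M$; then $P$ is the parabolic attached to a generic point of $X_*(S) \otimes \R$ in the $(-1)$-eigendirection. Minimality of $P$ among $\theta$-split $F$-parabolics is equivalent, via the standard dictionary between parabolics and faces of the Weyl chamber decomposition, to maximality of $S$ among $(F,\theta)$-split tori, which we have just shown equals maximality of $S \times \overline F$ among $\theta$-split tori in $G \times \overline F$. Since the combinatorial data defining $P$ from $S$ (the relative root system of $G$ with respect to $S$, together with the $\theta$-action on it) is insensitive to base change once $G$ is $F$-split, the parabolic of $G \times \overline F$ attached to the same generic cocharacter is exactly $P \times \overline F$, and it is minimal $\theta$-split there.

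The main obstacle I anticipate is the reduction step for the torus: showing that an arbitrary $\theta$-split torus $S'$ in $G \times \overline F$ containing $S \times \overline F$ actually lies in $Z_{G \times \overline F}(S \times \overline F)$ and can be brought inside a common maximal torus with $S \times \overline F$ on which $\theta$ acts by the same matrix as over $F$. This is where the hypothesis that $G$ is $F$-split is essential — it guarantees a maximal torus defined over $F$ through $S$, so that the $\theta$-action on cocharacters is literally the same lattice automorphism before and after base change, with no Galois twisting to account for. Once that rigidity is in hand, both statements follow from the eigenlattice computation and the parabolic–chamber dictionary.
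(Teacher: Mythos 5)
Your core idea for the torus claim---that the $(-1)$-eigenlattice of $\theta_*$ on $X_*(T)$ is $X_*(S)$ and doesn't change under base change because $T$ is already split over $F$---is a reasonable alternative to what the paper does, which instead invokes the decomposition $Z_G(S) = C \cdot M_1 \cdot M_2$ of \cite[Lemma 4.5]{HW} and the fact that the isotropic kernel $M_2$ lies inside the $\theta$-fixed group, so that $\theta$ acts trivially on $Z_G(S)/C^- = Z_G(S)/S$ and any $\theta$-split torus over $\overline F$ must map to something trivial in that quotient. But as written your argument has two genuine gaps.

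First, for $\theta$ to act on $X_*(T)$ at all you need $T$ to be $\theta$-stable, and this does not follow merely from $T$ being a maximal $F$-split torus of $Z_G(S)$ containing $S$: $\theta(T)$ is another such torus but need not equal $T$. You must argue for, or cite, the existence of a $\theta$-stable maximal $F$-split torus through $S$; this is precisely \cite[Lemma 2.4]{HW}. Second, and more seriously, the reduction step is not finished. You correctly deduce that any $\theta$-split $S' \supsetneq S_{\overline F}$ lies in $Z_G(S) \times \overline F$, but then assert that ``the previous paragraph applies''---yet the previous paragraph only handles $S'$ contained in $T \times \overline F$. To close this you would need to conjugate $S'$ into $T \times \overline F$ by an element that both fixes $S_{\overline F}$ pointwise and preserves the $\theta$-split condition (e.g.\ an element of $(Z_G(S)^\theta)^0(\overline F)$), or else show directly that every $\theta$-stable maximal torus of $Z_G(S)_{\overline F}$ has maximal $\theta$-split subtorus $S_{\overline F}$. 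Either route requires exactly the Helminck--Wang machinery (conjugacy of maximal $\theta$-split tori under $(Z^\theta)^0$, or the structural fact that $\theta$ is trivial on $Z_G(S)/S$) that your eigenlattice argument was meant to sidestep. You correctly identified this reduction as the main obstacle, but the splitness of $G$ over $F$ does not, by itself, provide it; the ``more precisely'' clause in your paragraph only re-establishes $S' \subset Z_G(S)_{\overline F}$, not $S' \subset T_{\overline F}$.

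Your treatment of the parabolic claim is essentially the same reduction to the torus case that the paper carries out via \cite[Prop 4.7]{HW} (take the $\theta$-stable Levi $M = P \cap \theta(P)$, extract the maximal $(F,\theta)$-split central torus $A^-$, apply the torus result, and read the parabolic back off), so it is fine modulo the gaps above and the ``standard dictionary'' you invoke, which is precisely that reference.
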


\begin{proof}

Let $S$ be such a torus, and let $A$ be a maximal $F$-split torus containing $S$. As in \cite[Lemma 4.5]{HW}, let $C$, $M_1$, and $M_2$ be the central, anisotropic, and isotropic factors of $Z_G(S)$ over $F$. Because $Z_G(A) = A$, loc. cit. gives $CM_1 \subset A$ so that $M_1$ is trivial and $C$ is split. We have $S \subset C$, and $C$ is $\theta$-stable, so by the maximality of $S$ we must have $S = C^-$. As $M_2 \subset G^\theta$ by loc. cit., $\theta$ acts trivially on $Z_G(S) / C^-$ so there is no strictly larger $\theta$-split torus containing $S$ over $\overline{F}$.

Let $P$ be such a parabolic. By \cite[Lemma 2.4]{HW} we may let $A \subset P$ be a $\theta$-stable maximal $F$-split torus of $G$. Then \cite[Proposition 4.7]{HW} implies that $A^-$ is a maximal $(F,\theta)$-split torus of $G$, and $Z_G(A^-) = P \cap \theta(P)$. We have shown that $A^-$ is a maximal $\theta$-split torus in $G \times \overline{F}$, and applying loc. cit. again gives that $P$ is minimal $\theta$-split in $G \times \overline{F}$.
\end{proof}

\subsection{A result of Sakellaridis on the unramified $C^\infty$ spectrum}\label{smoothspec}

In \cite{Sk1,Sk2}, Sakellaridis describes the unramified $C^\infty$ spectrum of a spherical variety. We reprise his results below in the symmetric case, and use them to describe an alternative approach to Theorems \ref{thm1} and \ref{thm2} in the case when $G_{v_0}$ is not quasi-split. Throughout, $F$ will be a $p$-adic field and, as in Section \ref{background}, the group $G$ will be split over $F$.

Let $A$ and $B$ be the $\theta$-stable maximal split torus and Borel of $G$ chosen in Lemma \ref{Saknot}. Let $\delta$ denote the modular character of $A$ with respect to $B$.  Let $\check{A}$ be the complex dual torus of $A$, so that unramified characters of $A$ correspond to elements of $\check{A}$.  Let $W$ be the Weyl group of $A$ and $\check{A}$. Recall that the irreducible unramified representations of $G$ are in bijection with $\check{A} / W$, via the map taking $\pi$ to its Satake parameter.  We introduce the torus $A_X= A / A \cap H$ and write $\check{A}_X$ for its dual torus. We then have a map $\iota: \check{A}_X \to \check{A}$ with finite kernel.  If $\chi$ is an unramified character of $A$, let $I(\chi)$ be the corresponding unitarily normalized induced representation of $G$.

The following result of Sakellaridis \cite[Theorem 1.2.1]{Sk1} describes the unramified $H$-dis\-tin\-guished $C^\infty$ spectrum in terms of the image $\iota(\check{A}_X)$ in $\check{A}$, denoted $A_X^*$. 

\begin{theorem}[Sakellaridis]\label{smoothdist}
There exists a non-zero morphism $C^\infty_c(X) \to I(\chi)$ only if $\chi$ lies in a $W$-translate of $\delta^{-1/2} A_X^*$.
\end{theorem}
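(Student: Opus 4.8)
The plan is to reduce the statement to a computation of the Jacquet functor of $C^\infty_c(X)$ along the $\theta$-split Borel $B$, and to identify the support of this Jacquet module with (a $W$-orbit of) $\delta^{-1/2} A_X^*$. First I would invoke Frobenius reciprocity in the smooth category: a nonzero $G$-morphism $C^\infty_c(X) \to I(\chi)$ corresponds to a nonzero morphism from the normalized Jacquet module $r_B\big(C^\infty_c(X)\big)$ to the one-dimensional $A$-module on which $A$ acts by $\chi$ (up to the usual $\delta^{1/2}$ twist built into normalized induction). So the theorem is equivalent to showing that every character of $A$ appearing in $r_B\big(C^\infty_c(X)\big)$ lies in $\delta^{-1/2} A_X^*$ after a $W$-twist.

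Next I would analyze the $B$-orbit structure on $X = G/H$. Since $G$ is split over $F$ and $B$ is $\theta$-split, the open $B$-orbit is the one responsible for the ``generic'' part of the Jacquet module; its stabilizer is $B \cap H$, and the associated contribution to $r_B(C^\infty_c(X))$ is built from $C^\infty_c(B/B\cap H)$, whose $A$-action is through the torus $A/(A\cap H) = A_X$, twisted by the modular character. Concretely, $B = A \ltimes U$ with $U$ unipotent; because $P = B$ is $\theta$-split, $\theta(U)$ is the opposite unipotent, and one checks that $U \cap H$ is trivial (or at least that $U$ acts with no fixed vectors beyond the expected ones), so taking $U$-coinvariants of $C^\infty_c(G/H)$ along the open orbit leaves $C^\infty_c(A/A\cap H)$ with $A$ acting by translation times $\delta^{1/2}$. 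The characters of $A$ on this space are exactly the unramified characters trivial on $A \cap H$, twisted by $\delta^{1/2}$ — i.e. $\delta^{1/2}$ times characters pulled back along $A \twoheadrightarrow A_X$, which on dual tori is precisely $\delta^{1/2} \iota(\check A_X) = \delta^{1/2} A_X^*$. (The twist appears as $\delta^{-1/2}$ in the statement because of the normalization convention in $I(\chi)$.) The non-open $B$-orbits, parametrized via the $B$-orbit–Weyl-group combinatorics of symmetric spaces, contribute characters that differ from the open-orbit ones by Weyl translates, using that all minimal $\theta$-split parabolics are $G(\overline F)$-conjugate (Lemma \ref{alg-clo}) and that $W$ acts transitively on the relevant configurations; this accounts for the ``$W$-translate'' in the statement.

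The main obstacle, and the part requiring genuine care rather than formal manipulation, is controlling the contributions of the \emph{lower-dimensional} $B$-orbits on $X$. The filtration of $C^\infty_c(X)$ by supports on $B$-orbit closures (the Bernstein–Zelevinsky geometric lemma in this equivariant setting) produces subquotients indexed by the orbits, and one must verify that each subquotient's Jacquet module along $B$ contributes only characters in $W\cdot(\delta^{-1/2}A_X^*)$. This is exactly where Sakellaridis's analysis of the "boundary degenerations" of the spherical variety enters: each $B$-orbit is fibered over a torus that is a quotient of $A_X$, and its modular-character correction is compatible with the claimed set after a Weyl twist. I would cite this orbit-by-orbit bookkeeping to \cite{Sk1} rather than reprove it, since the combinatorics of $B$-orbits on symmetric varieties — and the identification of each orbit's "little" torus as a subtorus of $A_X$ up to $W$ — is precisely the technical heart of \cite[Theorem 1.2.1]{Sk1}. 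Thus the proof I would present is: Frobenius reciprocity, reduction to $r_B(C^\infty_c(X))$, explicit computation of the open-orbit contribution giving $\delta^{-1/2}A_X^*$, and an appeal to the $B$-orbit stratification (via \cite{Sk1}) to handle the rest.
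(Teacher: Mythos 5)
The paper does not prove this theorem; it is stated as a citation of \cite[Theorem 1.2.1]{Sk1}, and no proof appears in the text. So there is no ``paper's proof'' to compare against, and the only correct answer is the one-line appeal to Sakellaridis. Your sketch of the strategy (Frobenius reciprocity to reduce to the Jacquet module $r_B(C^\infty_c(X))$, then a Bernstein--Zelevinsky orbit filtration) is a reasonable reconstruction of the rough shape of Sakellaridis's argument, which is fine as informal commentary but should not be presented as a proof.

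That said, your sketch contains a substantive error: you assume $B$ is $\theta$-split, so that $\theta(U)$ is opposite to $U$ and $U \cap H = \{e\}$. That is only the $\theta$-quasi-split case. In the setup of Lemma \ref{Saknot} (the setup in force for Theorem \ref{smoothdist}), $B$ is merely a Borel with $A \subset B \subset P$, where $P$ is a minimal $\theta$-split parabolic with $\theta$-stable Levi $L$; no $\theta$-split Borel need exist, and indeed the case relevant to the rest of the paper is precisely when $G$ is \emph{not} $\theta$-quasi-split. In that case $B \cap H$ has a nontrivial unipotent part, because $[L,L] \subset H$ by \cite[Prop.\ 2(ii)]{Vu}, so $B \cap [L,L]$ sits in the stabilizer of $x_0$. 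The ``little torus'' on the open orbit is then $L/(L\cap H) \simeq A/(A \cap H)$, recovered after accounting for this unipotent piece --- this is exactly the content of Lemma \ref{Saknot} --- and the modular twist $\delta^{-1/2}$ bookkeeping is correspondingly more involved than ``take $U$-coinvariants and land in $C^\infty_c(A/A\cap H)$.'' So the open-orbit computation you propose to do ``by hand'' is not quite right as written, and in any event is better left to the citation, as the paper does.
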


From this we may deduce the following consequence.

\begin{cor}
\label{quasi-split-temp}

If $G$ is not $\theta$-quasi-split, any irreducible unramified representation $\pi$ of $G$ that occurs as a subrepresentation of $C^\infty(X)$ must be non-tempered.

\end{cor}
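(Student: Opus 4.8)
The plan is to deduce this from Theorem~\ref{smoothdist} by passing to contragredients and then reading off the ``real part'' of the resulting constraint on Satake parameters.

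First I would record the duality input. Since $H$ is reductive, $G$ and $H$ are both unimodular, so $X$ carries a $G$-invariant measure and the integration pairing realizes $C^\infty(X)$ as the smooth part of the full linear dual of $C^\infty_c(X)$. Hence if an irreducible unramified $\pi$ occurs as a subrepresentation of $C^\infty(X)$, then dualizing yields a nonzero --- hence surjective --- $G$-morphism $C^\infty_c(X)\to\widetilde\pi$, where $\widetilde\pi$ denotes the contragredient of $\pi$; it is again irreducible and unramified, and $\pi$ is tempered if and only if $\widetilde\pi$ is. Next I would embed $\widetilde\pi$ into a principal series: being unramified, $\widetilde\pi$ is not supercuspidal, so its normalized Jacquet module $r_B(\widetilde\pi)$ is a nonzero finite-length $A$-module whose constituents are unramified characters lying in the $W$-orbit of the Satake parameter of $\widetilde\pi$; choosing an irreducible quotient of $r_B(\widetilde\pi)$ and applying Frobenius reciprocity produces an embedding $\widetilde\pi\hookrightarrow I(\chi)$ for a suitable representative $\chi$ of that $W$-orbit. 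Composing, $C^\infty_c(X)\to\widetilde\pi\hookrightarrow I(\chi)$ is a nonzero morphism, so Theorem~\ref{smoothdist} forces $\chi$ to lie in a $W$-translate of $\delta^{-1/2}A_X^*$.

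Then I would pass to real parts. For an unramified character $\psi$ of $A(F)$ write $|\psi|$ for the element of $X^*(A)_\R := X^*(A)\otimes\R$ attached to $a\mapsto|\psi(a)|$; the map $\psi\mapsto|\psi|$ is $W$-equivariant, and an irreducible unramified representation is tempered precisely when $|\psi|=0$ for a representative $\psi$ of its Satake parameter. Here $|\delta^{-1/2}|=-\rho$, with $\rho$ the half-sum of the $B$-positive roots, and $\{\, |\xi| : \xi\in A_X^* \,\}$ is the subspace $\mathcal V\subseteq X^*(A)_\R$ orthogonal to $X_*(A^+)_\R$ --- where $A^+$ is the identity component of $A^\theta$, which equals $(A\cap H)^\circ$ --- equivalently the $(-1)$-eigenspace of $\theta$ on $X^*(A)_\R$. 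Consequently $|\chi|\in W\cdot(-\rho+\mathcal V)$. If $\pi$ (equivalently $\widetilde\pi$) were tempered, then $|\chi|=0$, and since no nontrivial element of $W$ fixes the regular weight $\rho$ this forces $\rho\in\mathcal V$, i.e.\ $\theta(\rho)=-\rho$.

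To conclude I would invoke the following structural point. As $A$ is $\theta$-stable, $\theta(B)$ is a Borel containing $A$ whose half-sum of positive roots is $\theta(\rho)$; if $\theta(\rho)=-\rho$, this is the half-sum of positive roots of the Borel $B^-$ opposite to $B$ with respect to $A$, and since distinct Borels containing $A$ have distinct such half-sums we get $\theta(B)=B^-$, i.e.\ $B$ is $\theta$-split. Since $G$ is split over $F$ and $B\supseteq A$, the Borel $B$ is defined over $F$, so $G$ contains a $\theta$-split Borel over $F$ and is therefore $\theta$-quasi-split, contrary to hypothesis. Hence $\pi$ must be non-tempered. The main obstacle I anticipate is the normalization bookkeeping in the last two steps: one has to pin down exactly what $\delta$ and the map $\iota$ mean in Theorem~\ref{smoothdist} in order to be sure that $|\delta^{-1/2}A_X^*|=-\rho+\mathcal V$ and that $\mathcal V$ is the $(-1)$-eigenspace of $\theta$; once that is in place, the equivalence between $\rho\in\mathcal V$ and $\theta$-quasi-splitness is short, given the structure theory of symmetric varieties. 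The duality and Jacquet-module inputs are routine.
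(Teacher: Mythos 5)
Your proposal is correct and follows the same overall route as the paper: dualize to get a nonzero map $C^\infty_c(X)\to\pi^\vee$, embed $\pi^\vee$ in an unramified principal series whose inducing character is a representative of the (inverse) Satake parameter, and invoke Theorem~\ref{smoothdist} to place that parameter in a $W$-translate of $\delta^{-1/2}A_X^*$, then show this is incompatible with unitarity unless $G$ is $\theta$-quasi-split. The only real divergence is in the last step: the paper isolates a clean lemma --- $\delta$ is unitary on $A^+$ if and only if $G$ is $\theta$-quasi-split --- proved via the $\theta$-split parabolic decomposition $\delta=\delta_L\delta_U$ and $\theta(\delta_U)=\delta_U^{-1}$, whereas you pass to the archimedean absolute values $|\cdot|$ and compute directly that the constraint forces $\rho$ to lie in the $(-1)$-eigenspace of $\theta$ on $X^*(A)_\R$, i.e.\ $\theta(\rho)=-\rho$, and then identify $\theta(B)$ with $B^-$ by comparing half-sums. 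Both calculations encode the same information (that the $\theta$-stable Levi $L$ of the minimal $\theta$-split parabolic be a torus), and your version is a perfectly good substitute for the paper's lemma; it also makes transparent why the paper's Lemma~\ref{Saknot} (pinning down Sakellaridis's $A$, $B$, $A_X$ in the symmetric case) is needed, which you flag correctly as the remaining bookkeeping. One minor wrinkle: your aside that ``no nontrivial element of $W$ fixes the regular weight $\rho$'' is not actually needed --- from $0=w(-\rho+v)$ one gets $v=\rho$ immediately, regardless of whether $\mathcal V$ is $W$-stable.
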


To see how Corollary \ref{quasi-split-temp} follows from Theorem \ref{smoothdist}, we must convert from Sakellaridis' notation to ours. Sakellaridis defines the tori $A$ and $A_X$ using an open Borel orbit $\mathring{X} \subset X$ and its stabilizer, a parabolic subgroup called the associated parabolic.  The following lemma translates his definitions to the symmetric case.

\begin{lemma}
\label{Saknot}
Let notation be as in Section \ref{background}. In particular, $G$ is assumed split. Let $x_0=H$ denote the identity coset in $X=G/H$. We may choose

\begin{itemize}

\item a $\theta$-stable maximal $F$-split torus $A$,

\item a Borel subgroup $B$ defined over $F$, and

\item a minimal $\theta$-split parabolic $F$-subgroup $P$

\end{itemize}
such that $A \subset B \subset P$ and

\begin{itemize}

\item the orbit $B x_0$ is open,

\item $P = \textup{stab}_G(B x_0)$ is the parabolic associated with the open orbit, and

\item the tori $A$ and $A_X = A / A \cap H$ are the same as those associated with $B$ and $P$ in \cite[p. 8]{Sk2}.

\end{itemize} 

\end{lemma}

\begin{proof}

Let $P$ be a minimal $\theta$-split parabolic $F$-subgroup, and let $A \subset P$ be a $\theta$-stable maximal $F$-split torus of $G$. We may choose a Borel subgroup $A \subset B \subset P$.  By \cite[Lemma 4.8]{HW}, $Bx_0$ is open in $X$, and $P$ stabilizes this open orbit. It is also known that $P = \textup{stab}_G(B x_0)$, see e.g. \cite[p. 8]{Sk2}.  Let $L = Z_G(A^-)$ be the $\theta$-stable Levi of $P$, so that $A \subset B \cap L$.  It follows that our choices of $A$, $B$, $P$, and $L$ are compatible with those in \cite[p. 8]{Sk2} in the special case of a symmetric variety.  Sakellaridis defines $A_X = L / L \cap H$ in \cite[p. 8]{Sk2}.  As $A$ is a maximal torus in $L$ and $[L,L] \subset H$ \cite[Proposition 2(ii)]{Vu}, we have an exact sequence $1 \to A \cap H \to A \to L / L \cap H \to 1$ so $A_X \simeq A / A \cap H$, which completes the proof.
\end{proof}

We next show that if $G$ is not $\theta$-quasi-split, then $\delta^{-1/2} A_X^*$ does not intersect the maximal bounded subgroup of $\check{A}$. If there is some unitary $\chi \in \check{A}$ that lies in $\delta^{-1/2} A_X^*$, then we have $\chi \delta^{1/2} \in A^*_X$.  As $A_X^*$ is the set of characters of $A$ that arise by pullback from $A_X$, this implies that $\chi \delta^{1/2}$ is trivial on $A \cap H$, and that $\delta$ is unitary on $A \cap H$.  However, this contradicts the following lemma.

\begin{lemma}

$\delta$ is unitary on $A^+$ if and only if $G$ is $\theta$-quasi-split.

\end{lemma}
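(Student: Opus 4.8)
The plan is to convert the statement into root combinatorics and then translate the combinatorial condition back into the language of $\theta$-split parabolics. Write $\chi_B=\sum_{\alpha>0}\alpha\in X^*(A)$ for the algebraic character underlying $\delta$, the sum being over the roots of $A$ that are positive with respect to $B$; since $A$ is $\theta$-stable, $\theta$ acts on $X^*(A)$ by $\theta\chi=\chi\circ\theta$ and permutes the roots.

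First I would reduce the assertion to the identity $\theta(\chi_B)=-\chi_B$. The subtorus $A^+$ is split, so restriction of characters $X^*(A)\to X^*(A^+)$ is surjective and its target is torsion-free; hence $\delta$ is unitary on $A^+(F)$ if and only if $\chi_B$ restricts to $0$ on $A^+$. As $\chi_B$ and $\theta(\chi_B)$ agree on $A^+$ (being $\theta$-fixed there), $\chi_B|_{A^+}=0$ is equivalent to $(\chi_B+\theta(\chi_B))|_{A^+}=0$; but $\chi_B+\theta(\chi_B)$ is $\theta$-invariant, and a $\theta$-invariant character trivial on $A^+$ must vanish, since the kernel of $X^*(A)\to X^*(A^+)$ meets the $\theta$-fixed part of $X^*(A)_\Q$ only in $0$ (a dimension count gives that this kernel is rationally the $(-1)$-eigenspace of $\theta$). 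So $\delta$ is unitary on $A^+$ if and only if $\theta(\chi_B)=-\chi_B$.

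Next I would show $\theta(\chi_B)=-\chi_B$ holds if and only if $B$ is $\theta$-split. Since $A$ is $\theta$-stable, $\theta B$ is again a Borel containing $A$, and $B$ is $\theta$-split exactly when $\theta B$ is the Borel opposite to $B$, i.e. when $\theta$ sends every positive root to a negative one; in that case $\theta$ maps the positive roots bijectively onto the negative roots, and summing gives $\theta(\chi_B)=\sum_{\beta<0}\beta=-\chi_B$. Conversely, decompose the set $\{\theta\alpha:\alpha>0\}$ as $M^+\sqcup M^-$ according to the sign of its elements; the relation $\theta(\chi_B)=-\chi_B$ reads $\sum_{\gamma\in M^+}\gamma+\sum_{\gamma\in M^-}\gamma=\sum_{\beta<0}\beta$, and cancelling the common negative part leaves a sum of positive roots equal to a sum of negative roots. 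Expanding both sides in the simple roots, which are linearly independent, forces each to vanish, so $M^+=\emptyset$; thus $\theta$ carries positive roots to negative roots and $B$ is $\theta$-split. This short linear-independence step is the only place the structure of the root system enters, and it is what I regard as the heart of the proof, although it is brief.

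Finally I would link ``$B$ is $\theta$-split'' to ``$G$ is $\theta$-quasi-split'', using that $A$ and $B$ are the torus and Borel of Lemma \ref{Saknot}, so $A\subset B\subset P$ with $P$ a minimal $\theta$-split parabolic $F$-subgroup. If $B$ is $\theta$-split, then $B$ is a $\theta$-split parabolic contained in the minimal one $P$, whence $B=P$; so $P$ is a Borel and $G$ is $\theta$-quasi-split, having the $\theta$-split $F$-Borel $B$. Conversely, if $G$ is $\theta$-quasi-split it has a $\theta$-split Borel $B_1$ over $F$, which is a minimal $\theta$-split $F$-parabolic and hence $\overline{F}$-conjugate to $P$ (Section~\ref{background}); therefore $P$ is a Borel, and $B\subset P$ with both Borels forces $P=B$, so $B$ is $\theta$-split. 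Chaining the three reductions proves the lemma. Besides the combinatorial step, the only point requiring care is in this last reduction: it is essential that $B$ was chosen inside the minimal $\theta$-split parabolic, since a Borel merely containing a $\theta$-stable maximal split torus need not be $\theta$-split even when $G$ is $\theta$-quasi-split.
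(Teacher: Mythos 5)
Your argument is correct, and it takes a genuinely different route from the paper's. The paper factors the modular character $\delta=\delta_L\delta_U$ via the Levi decomposition $P=LU$ of the minimal $\theta$-split parabolic: it shows $\delta_U$ is trivial on $A^+$ because $P$ and $\theta P$ are opposed, then observes that $\delta_L$ is trivial on $A^-$ (as $A^-$ is central in $L$), so unitarity of $\delta$ on $A^+$ is equivalent to unitarity of $\delta_L$ on all of $A$, which forces $L$ to be a torus. You instead work directly with the sum of positive roots $\chi_B$: you first reduce unitarity to the algebraic identity $\theta(\chi_B)=-\chi_B$ via the rational $\pm1$-eigenspace decomposition of $X^*(A)$, then translate that identity into the statement that $\theta$ maps $\Delta^+$ to $\Delta^-$ (your linear-independence cancellation), and finally identify "$B$ is $\theta$-split" with "$G$ is $\theta$-quasi-split" using $B\subset P$ and the conjugacy of minimal $\theta$-split parabolics. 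What the paper's approach buys is conceptual clarity about \emph{which} piece of $\delta$ is responsible for the failure of temperedness (namely $\delta_L=2\rho_L$), which dovetails with the later dual-group picture; what yours buys is a more elementary, purely root-combinatorial proof that never needs the factorization $\delta=\delta_L\delta_U$. Your closing remark that the hypothesis $B\subset P$ is essential — a Borel containing a $\theta$-stable maximal split torus need not itself be $\theta$-split — is a correct and worthwhile caution that the paper leaves implicit.
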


\begin{proof}

Let $P = LU$.  We have $\delta = \delta_L \delta_U$, where $\delta_L$ and $\delta_U$ are the modular characters of $B\cap L$ and $U$ respectively.  Because $P$ and $\theta(P)$ are opposed, we have $\theta(\delta_U) = \delta_U^{-1}$, so that $\delta_U^2 = 1$ on $A^+$; by positivity, $\delta_U = 1$ on $A^+$.  Therefore $\delta|_{A^+}$ is unitary if and only if $\delta_L |_{A^+}$ is.  As $\delta_L$ is trivial on $A^-$, this is equivalent to $\delta_L$ being unitary, or to $L$ being a torus.  This completes the proof.
\end{proof}

We now turn to the proof of Corollary \ref{quasi-split-temp}.  Assume that $G$ is not $\theta$-quasi-split, and let $\pi \subset C^\infty(X)$ be irreducible, unramified, and tempered.  Let $\chi \in \check{A}/W$ be the Satake parameter of $\pi$. The contragredient $\pi^\vee$ has Satake parameter $\chi^{-1}$, and we have a non-zero map $C^\infty_c(X) \to \pi^\vee$.  By \cite[Proposition 2.6]{Ca} we may choose a representative for $\chi^{-1}$ in $\check{A}$ such that $\pi^\vee \subset I(\chi^{-1})$.  We now have a non-zero map $C^\infty_c(X)\rightarrow I(\chi^{-1})$, so that by Theorem \ref{smoothdist}, $W \chi^{-1}$ must intersect $\delta^{-1/2} A_X^*$, but this contradicts our assertion above as $\chi$ is unitary.

\subsection{An alternative approach to Theorems \ref{thm1} and \ref{thm2}}\label{altproof}

We now describe how one might use Corollary \ref{quasi-split-temp} to prove asymptotic lower bounds for periods.  The argument is in the same style as that of Rudnick and Sarnak described in Section \ref{RudSar}.

We return to the global situation of Theorems \ref{thm1} and \ref{thm2}, and freely use the notation of those statements.  We note that the compact subgroup $K_\infty$ is given by $K_{v_0} = H_{v_0}$ and $K_v = G_v$ at all other infinite places.  We shall let $N = 1$ for simplicity, and write $Y = G(F) \backslash G(\A) / K_\infty K$.  The image of $H(\A)$ in $Y$ is a finite number of points, and we shall assume for simplicity that it is a single point $p$.  Let $\mathcal{P}_H : C^\infty(G(F) \backslash G(\A)) \rightarrow \C$ be the period map $f \mapsto \int_{H(F) \backslash H(\A)} f dh$. When restricted to $C^\infty(Y)$, $\mathcal{P}_H$ is just evaluation at $p$.

Note that a general connected real reductive group $G'$ with Cartan involution $\theta$ is (quasi-) split over $\R$ if and only if $G' \times \C$ is $\theta$-(quasi-)split.  If we assume that $G_{v_0}$ is not quasi-split over $\R$ then this, together with the invariance of $\theta$ (quasi-) splitness under extension of algebraically closed fields, and Lemma \ref{alg-clo}, implies that $G_v$ is not $\theta$-quasi-split at any finite $v$ at which $G$ splits.  Corollary \ref{quasi-split-temp} then gives that any unramified representation $\pi_v$ occurring in $C^\infty(G_v / H_v)$ must be non-tempered.

Let $\pi$ be a cuspidal automorphic representation of $G$, and let $\psi \in \pi$ be invariant under $K_\infty K$.  If $\mathcal{P}_H(\psi) \neq 0$, this implies that each factor $\pi_v$ admits a non-zero smooth linear functional invariant under $H_v$.  This is equivalent to the existence of an embedding $\pi_v \rightarrow C^\infty(G_v / H_v)$, so that if $v$ is finite, $G$ splits at $v$, and all data are unramified, then $\pi_v$ is non-tempered.  The strategy would then be to use the trace formula to show that the number of such $\psi$ is a power smaller than the total number of $\psi$.  Combining this with the local Weyl law would then produce asymptotic growth.

Note that in the case when $G_{v_0}$ is quasi-split but not split over $\R$, Theorem \ref{smoothdist} only implies that the Satake parameters of the distinguished $\pi$ lie in a fixed lower-dimensional subset of $\check{A}$.  It may also be possible to use this to prove a power saving for the number of such $\pi$.

\subsection{Higher dimensional periods}

One advantage of the method described in Section \ref{altproof} is that it applies equally well to periods along positive dimensional submanifolds of $Y$ arising from rational symmetric subgroups.  Note that this would produce a result of the form ``there are certain eigenfunctions whose periods are larger than the average by a power of the eigenvalue'', while determining the size of the average period is a separate problem.  It should be pointed out that the average size of a positive dimensional period should be a negative power of the eigenvalue, so even if one could improve over this one would not necessarily obtain power growth of sup norms as a result.

In comparison, the relative trace formula approach we use is more difficult in the positive dimensional case, because the analysis of the error terms becomes much more complicated. In the case of a point, one needs to bound the value of a spherical function $\varphi_\lambda$ away from its center of symmetry. In the positive dimensional case, one needs to bound the value of an oscillatory integral whose kernel is constructed from $\varphi_\lambda$, and which is taken over two copies of the submanifold in question. Moreover, the bound obtained must be uniform as the submanifolds move.

\section{Hecke operators and the $L^2$ spectrum of symmetric varieties}

We have sketched two proofs of Theorems \ref{thm1} and \ref{thm2}, the one described in Section \ref{intro:idea}, which we carry out to completion in this paper, and the one outlined in Section \ref{altproof}. In the latter, no Hecke operators appear; in the former, no distinction principle is used. What is their connection, if any?

In this section we describe how a conjecture of Sakellaridis and Venkatesh on the $L^2$ spectrum of symmetric varieties relates to the method we have used in this paper, in particular to our choices of test functions in the trace formula. As their conjecture is expressed in terms of $X$-distinguished Arthur parameters, it acts in a sense as a bridge between the two methods.

\subsection{Plancherel measures}\label{Pl-meas}

Recall from Section \ref{intro:idea} that a comparison of trace formulae reduces the problem of finding exceptional sequences of Maass forms to that of finding $\omega$ in the global Hecke algebra with the property that $\Pi_H(\omega*\omega^*)(1)$ is large relative to $(\omega* \omega^*)(1)$. Indeed, these are just the identity distributions on the geometric side of the $H$-relative trace formula and the Arthur--Selberg trace formula. The latter question may in turn be reduced to a local problem, namely that of finding an $L^2$ normalized $\tau \in \cH_v$ satisfying
\begin{equation}\label{competing-conditions}
\tau(1) = 0\qquad\text{and}\qquad \Pi_H \tau(1) \gg 1,
\end{equation}
for $v$ in a set of places having positive density. In Section \ref{intro:idea} we explained how these two conditions can be interpreted geometrically, in terms of Hecke returns to a fixed point. We would now like to interpret these same conditions spectrally.

Until Section \ref{testchoice} we let $F$ be a $p$-adic field.  We let $G$ be a split reductive group over $F$, and let $K$ be a hyperspecial maximal compact subgroup.  We assume that the Haar measure on $G$ gives $K$ measure 1.  We let $\cH$ be the spherical Hecke algebra with respect to $K$.  We let $\widehat{G}$ be the unitary dual of $G$, and let $\widehat{G}^\text{sph}$ be the spherical unitary dual with respect to $K$.

Then the first condition in \eqref{competing-conditions} can be expressed, via the Plancherel inversion formula, as
\begin{equation}\label{oscillation}
\tau(1)=\int_{\widehat{G}^{\rm sph}} \widehat{\tau}(\nu) d\mu_G^\text{sph}(\nu) = 0.
\end{equation}
In other words, $\tau$ must be such that its Satake transform $\widehat{\tau}$ is oscillatory along the support of $\mu_G^\text{sph}$, the tempered spectrum $\widehat{G}^{\rm sph,  temp}$.

A similar Plancherel inversion formula can be used to express the second condition in \eqref{competing-conditions}. Let $X=G/H$ be a symmetric variety. We recall the existence of a Plancherel measure $\mu_X$ associated with the separable Hilbert space $L^2(X)$, viewed as a $G$-representation; roughly speaking, this is a measure on $\widehat{G}$ satisfying

\bes
L^2(X) = \int_{\widehat{G}} M(\pi) \otimes \pi \; d\mu_X(\pi),
\ees
where $M(\pi)$ is some multiplicity space.  Notice that $\mu_X$ is only defined up to absolutely continuous equivalence (we shall only be concerned with its support).

We let $\Pi_H : L^1(G) \rightarrow L^1(X)$ be given by integration over $H$.  If we let $v^0 = \Pi_H({\bf 1}_K)$, there is a second measure, the spherical Plancherel measure $\mu_X^\text{sph}$, which satisfies

\[
\langle \omega \cdot v^0,v^0\rangle_{L^2(X)} = \int_{\widehat{G}^{\rm sph}} \widehat{\omega}(\nu)\, d\mu_X^\text{sph}(\nu)
\]
for all $\omega \in \cH$.  In particular, the support of $\mu_X^\text{sph}$ is contained in the support of $\mu_X$.  Note that $\omega \cdot v^0$ denotes the action of $\omega$ on $v^0$, given by
\[
\omega \cdot v^0 =\int_{G} \omega(g) (g \cdot v^0)dg.
\]
By a simple unfolding argument, one may show that for any $\omega \in \mathcal{H}$ we have
\bes
\Pi_H\omega(1) = \int_{\widehat{G}^{\rm sph}}\widehat{\omega}(\nu)\, d\mu_X^{\textnormal{sph}}(\nu),
\ees
so that $\mu_X^\text{sph}$ determines the period of $\omega$ along $H$.  Using this, we may now rephrase the second condition in \eqref{competing-conditions} as
\be
\label{local-H-period}
\Pi_H \tau(1)=\int_{\widehat{G}^{\rm sph}} \widehat{\tau}(\nu) d\mu_X^\text{sph}(\nu) \gg 1.
\ee
In other words, $\tau$ must be such that its Satake transform $\widehat{\tau}$ does not oscillate too much on the support of $\mu_X^\text{sph}$.

Having expressed the two conditions in \eqref{competing-conditions} spectrally, we see that the existence of appropriate test functions $\tau$ can be read off from the support of $\mu_X^\text{sph}$, in particular relative to the tempered spectrum. We say that $\mu_X^\text{sph}$ is \textit{tempered} if the support of $\mu_X^\text{sph}$ is contained in $\widehat{G}^{\rm sph, temp}$ and \textit{strongly tempered} if $\mu_X^\text{sph} \leqslant C \mu_G^\text{sph}$ for some $C > 0$, where $\mu_G^\text{sph}$ is the spherical Plancherel measure on $G$. With this terminology, we summarize our discussion as follows:
\begin{itemize}

\item[(ST):] if $\mu_X^\text{sph}$ is strongly tempered the oscillation of $\widehat{\tau}$ should prevent one from simultaneously achieving both \eqref{oscillation} and \eqref{local-H-period};

\item[(T):] if $\mu_X^\text{sph}$ is tempered but not strongly tempered, the existence of $\tau$ satisfying both \eqref{oscillation} and \eqref{local-H-period} depends on how singular $\mu_X^\text{sph}$ is relative to $\mu_G^\text{sph}$;

\item[(NT):] if $\mu_X^\text{sph}$ is non-tempered, the exponential growth of $\widehat{\tau}(\nu)$ away from $\widehat{G}^{\rm sph, temp}$ should allow one (barring unforseen cancellation) to ensure both conditions \eqref{oscillation} and \eqref{local-H-period}.

\end{itemize}

In the next paragraph, we shall see how recent conjectures of Sakelleridis and Venkatesh relate the tempered properties of $\mu_X$ to the weak containment properties of $H$-distinguished representations.

\subsection{The conjecture of Sakellaridis and Venkatesh}
\label{SVsect}

Recall the dual torus $\check{A}_X$ and the map $\check{A}_X \to \check{A}$, which were of critical use in describing the local distinction argument of Section \ref{sym-var}. Let $\check{G}$ denote the Langlands dual group of $G$, containing $\check{A}$ as a maximal torus. In \cite[Section 2.2]{SV}, Sakellaridis and Venkatesh define a dual group $\check{G}_X$ associated with $X$ and a homomorphism $\iota : \check{G}_X \times \SL(2,\C) \rightarrow \check{G}$ whose restriction to $\check{G}_X$ has finite kernel. (Note that this requires imposing certain conditions on $X$, which we shall ignore as this section is purely expository.) The torus $\check{A}_X$ sits in the complex reductive algebraic group $\check{G}_X$ as a maximal torus, and the map $\iota : \check{G}_X \to \check{G}$ restricts to the natural map $\check{A}_X \to \check{A}$ above.

Again under technical assumptions that we shall ignore, $\iota(\check{G}_X)$ is equal to a group constructed by Gaitsgory and Nadler; see Section 3 of \cite{SV}, in particular Section 3.2.  Moreover, in the case of symmetric varieties, the group of Gaitsgory and Nadler is equal to the group $\check{H}$ constructed by Nadler in \cite{N}.\footnote{Note that Gaitsgory and Nadler consider spherical varieties over $\C$ rather than a $p$-adic field, but we may ignore this distinction as the dual groups are only defined using root data that are independent of the field.  Likewise, Nadler works with real reductive groups, but these are equivalent to complex reductive groups with involution.  These equivalences respect rank and quasi-splitness in the natural way.}  We continue to assume that $X$ is symmetric and now recall the following facts about $\check{G}_X$ and $\iota$.

\begin{itemize}

\item The rank of $\check{G}_X$ is equal to the $\theta$-split rank of $G$.

\item $\iota(\check{G}_X) = \check{G}$ if and only if $G$ is $\theta$-split.

\item $\iota$ is trivial on the $\SL(2,\C)$ factor if and only if $G$ is $\theta$-quasi-split.

\end{itemize}

The first claim is stated in Section 1.1 and Proposition 10.6.1 of \cite{N}.  The fact that $\iota(\check{G}_X) = \check{G}$ when $G$ is $\theta$-split is also stated in Section 1.1 there, and the reverse implication follows by considering ranks.  The third claim follows from the condition that $\iota$ be a distinguished morphism, as defined in the comment before Theorem 2.2.3 in \cite[Section 2.2]{SV}.  Indeed, the group $L$ in that comment is the Levi of a minimal $\theta$-split parabolic, and $\rho_L$ is the half sum of its positive roots, so that $\iota$ is trivial on $\SL(2,\C)$ if and only if $\rho_L$ is trivial, i.e. $L$ is a torus.  Sakellaridis and Venkatesh conjecture \cite[Conj. 16.2.2]{SV} that the support of $\mu_X$ may be described in terms of the tempered dual of $\check{G}_X$ and the map $\iota$.  They define an $X$-distinguished Arthur parameter to be a commutative diagram
\[
\xymatrix{
&\check{G}_X\times\SL_2 \ar[rd]^{\iota}  \\
\mathcal{L}_F\times\SL_2 \ar[ur]^{\phi\otimes {\rm Id}}\ar[rr] &&\check{G}
}
\]
where $\mathcal{L}_F$ is the local Langlands group of $F$, and $\phi$ is a tempered Langlands parameter for $\check{G}_X$.  This naturally gives rise to an Arthur parameter for $\check{G}$.  We shall say that an Arthur parameter for $\check{G}$ is $X$-distinguished if it arises from such a diagram, and likewise for an $X$-distinguished Arthur packet.

\begin{conj}[Sakellaridis-Venkatesh]
\label{SVconj}

The support of $\mu_X$ is contained in the Fell closure of the union of the $X$-distinguished Arthur packets for $\check{G}$.

\end{conj}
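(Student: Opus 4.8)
The plan is to reduce the statement --- which concerns the support of the local Plancherel measure $\mu_X$ of $L^2(X)$ for $X=G/H$ a symmetric variety over a $p$-adic field --- to two ingredients: a relative Plancherel decomposition of $L^2(X)$ along the boundary degenerations of $X$, and an Arthur-parametrization of the discrete spectra that enter that decomposition. Granting both, the conjecture becomes a bookkeeping exercise with the dual groups $\check{G}_{X}$ and the map $\iota$; without them it is a deep open problem, so what follows is necessarily a conditional sketch.

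First I would install the \emph{relative Plancherel decomposition}: $L^2(X)$ decomposes as a sum over associate classes of subsets $\Theta$ of the spherical roots, the $\Theta$-component being a direct integral built from the discrete spectrum $L^2(X_\Theta)_{\mathrm{disc}}$ of the boundary degeneration $X_\Theta$ by a unitary procedure analogous to parabolic induction. At the level of supports this says that $\mathrm{supp}\,\mu_X$ is the union over $\Theta$ of the images of $\mathrm{supp}\,\mu_{X_\Theta}^{\mathrm{disc}}$ under induction. This step is the harmonic-analytic core; it rests on scattering theory and Maass--Selberg-type relations for wave packets on $X$, but for symmetric $X$ over a $p$-adic field it is known, so I would cite it from the framework of \cite{SV} and the subsequent relative Plancherel literature rather than reprove it. The spherical shadow of this picture is already visible in Theorem \ref{smoothdist} and Lemma \ref{periodmeasure}: the support of $\mu_X^{\mathrm{sph}}$ is governed by $\check{A}_X$ via $\iota$, and this pins down the most continuous stratum $\Theta=\emptyset$.

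Second, for each $\Theta$ one needs that every relative discrete series of $X_\Theta$ has a Langlands parameter factoring through the distinguished morphism $\check{G}_{X_\Theta}\times\SL(2,\C)\to\check{G}$ attached to $X_\Theta$, and that inducing such parameters up to $\check{G}$ yields precisely $X$-distinguished Arthur parameters. The combinatorial input is that $\check{G}_{X_\Theta}$ is a Levi-type subgroup of $\check{G}_X$, so that tempered parameters for $\check{G}_{X_\Theta}$ together with the accompanying $\SL(2,\C)$-factors reassemble into the diagrams defining $X$-distinguished parameters for $\check{G}$; the size of the $\SL(2,\C)$-factor at level $\Theta$ is governed by $\rho_{L_\Theta}$, the half sum of positive roots of the Levi of the associated parabolic of $X_\Theta$ --- exactly the quantity whose vanishing detects $\theta$-quasi-splitness (the third bullet before the conjecture). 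One must also check that no packet outside the $X$-distinguished family is produced; for symmetric $X$ this should follow from the classification of relative discrete series together with Corollary \ref{quasi-split-temp} applied to the $X_\Theta$.

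The main obstacle is precisely this second step. Classifying the relative discrete series of $X$ and of all its boundary degenerations in terms of Langlands/Arthur parameters is not pure harmonic analysis: it presupposes the local Langlands correspondence for $\check{G}$, the construction of the relevant Arthur packets, and a matching between relative discrete series and $X_\Theta$-discrete parameters. Even granting Arthur's conjectures, proving that every relative discrete series is $X_\Theta$-distinguished --- and not merely that its parameter lands somewhere in $\check{G}$ --- is the content that remains open, and in the non-$\theta$-quasi-split case it is entangled with the appearance of a nontrivial $\SL(2,\C)$-factor, i.e. with genuinely non-tempered $\mu_X$. So the honest form of the proposal is conditional: given the relative Plancherel theorem and the Arthur parametrization of relative discrete series, Conjecture \ref{SVconj} follows by the dual-group bookkeeping sketched above; unconditionally it is out of reach with present methods.
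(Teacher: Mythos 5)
This statement is a \emph{conjecture} --- both in this paper and in its source \cite[Conj.~16.2.2]{SV} --- and the paper does not prove it; it is merely stated as background to motivate the choice of test functions and to explain why the non-$\theta$-quasi-split case is the favorable one. There is therefore no ``paper's own proof'' for your attempt to be measured against, and a complete proof is not known. Your conditional sketch does correctly identify the Sakellaridis--Venkatesh framework: a relative Plancherel decomposition of $L^2(X)$ over boundary degenerations $X_\Theta$, feeding into a dual-group reassembly where the $\SL(2,\C)$-factor is governed by $\rho_{L_\Theta}$; and you correctly isolate the genuinely open ingredient, namely matching relative discrete series with $X_\Theta$-distinguished Arthur parameters. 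But you should be explicit that this is a reconstruction of the conjecture's intended mechanism rather than a proof, and that the paper itself only uses the conjecture heuristically (and only its spherical shadow, which is what Theorem~\ref{smoothdist} actually establishes). In particular, the paper's unconditional results (Theorems~\ref{thm1} and~\ref{thm2}) are proved by a trace formula comparison and do not rest on Conjecture~\ref{SVconj}; citing the conjecture as if it were an input to the main theorems, or as if a proof were being supplied, would misrepresent the logical structure.
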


Note that the unramified members of a given Arthur packet should be contained in the associated $L$-packet, and we assume this from now on without further comment. Thus the above conjecture implies, in particular, that the support of the spherical measure $\mu_X^{\rm sph}$ is contained in the Fell closure of the union of the $L$-packets associated with $X$-distinguished Arthur parameters. The latter statement has in fact been proved in \cite{Sk2} under certain combinatorial assumptions. Let us now discuss what Conjecture \ref{SVconj} implies for $\mu_X^\text{sph}$ under the assumptions that $G$ is $\theta$-split, $\theta$-quasi-split, or neither. 

\begin{itemize}

\item[(ST):] If $G$ is $\theta$-split, then $\iota(\check{G}_X) = \check{G}$ and $\iota$ is trivial on $\SL(2,\C)$.  Conjecture \ref{SVconj} then implies that $\mu_X$ is supported on the tempered dual of $G$.  In fact, it may be shown in this case that $\mu_X^\text{sph}$ is strongly tempered.

\item[(T):] If $G$ is $\theta$-quasi-split but not $\theta$-split, $\iota$ is still trivial on the $\SL(2,\C)$ factor.  This implies that $\mu_X$ is still tempered.  However, because $\text{rank}(\check{G}_X) < \text{rank}(\check{G})$, if we identify the tempered spherical dual of $G$ with a quotient of a compact torus by the Weyl group, the support of $\mu_X^\text{sph}$ will be contained in a union of lower dimensional tori.  In particular, $\mu_X^\text{sph}$ will not be strongly tempered.

\item[(NT):] If $G$ is not $\theta$-quasi-split, then an $X$-distinguished Arthur parameter has nontrivial $\SL(2,\C)$ factor. Its underlying Langlands parameter is therefore non-tempered, and so then are all members of the associated $L$-packet. It follows that if $\psi$ is $X$-distinguished with packet $\Pi_\psi$, and $\pi \in \Pi_\psi$ is spherical, then $\pi$ must be non-tempered. From Conjecture \ref{SVconj} we deduce that the same is true for any $\pi$ in the support of $\mu_X^\text{sph}$.

\end{itemize}

\subsection{Existence of test functions}
\label{testchoice}

Finally, we return to the global situation of Theorems \ref{thm1} and \ref{thm2}.  We freely use the notation of those statements, with the exception that we drop the assumption that $G_{v_0}$ is not split. Let $v$ be a finite place at which $G$ is split and all data are unramified. We let $\mu_{G,v}^\text{sph}$ and $\mu_{X,v}^\text{sph}$ denote the spherical Plancherel measures of $G_v$ and $X_v = G_v / H_v$. As before, $G_{v_0}$ is (quasi-)split over $\R$ if and only if $G_v$ is $\theta$-(quasi-)split.

Comparing the conditions from Sections \ref{Pl-meas}-\ref{SVsect}, we obtain the following consequences of Conjecture \ref{SVconj} for the existence of $\tau$ satisfying both conditions in \eqref{competing-conditions}:

\begin{itemize}

\item[(ST):] If $G_{v_0}$ is split, then $\mu_{X,v}^\text{sph}$ is strongly tempered, and such $\tau$ should not exist. 

\item[(T):] If $G_{v_0}$ is quasi-split but not split, the existence of such $\tau$ depends on how singular $\mu_{X,v}^\text{sph}$ is. 

\item[(NT):] If $G_{v_0}$ is not quasi-split, $\mu_{X,v}^\text{sph}$ has non-tempered support, and such $\tau$ should exist.

\end{itemize}

\noindent Recalling our remarks at the end of Section \ref{altproof}, it seems that in the case (T) above the measure $\mu_{X,v}^\text{sph}$ is still singular enough to allow \eqref{competing-conditions} to be satisfied. This of course aligns with the conditions of our theorems, which only require $G_{v_0}$ to be non-split.

\end{document}